\DeclareMathOperator{\HHom}{\mathscr{H}\text{\kern -3pt {\calligra\large om}}\,}
\DeclareMathOperator{\Spec}{Spec}
\DeclareMathOperator{\Hom}{Hom}
\DeclareMathOperator{\ord}{ord}
\DeclareMathOperator{\Ext}{Ext}
\DeclareMathOperator{\End}{End}
\DeclareMathOperator{\Gm}{\mathbb{G}_m}
\DeclareMathOperator{\BGm}{B \mathbb{G}_m}
\DeclareMathOperator{\Br}{Br}
\DeclareMathOperator{\D}{D}
\DeclareMathOperator{\X}{\mathscr{X}}
\DeclareMathOperator{\W}{\mathscr{W}}
\DeclareMathOperator{\s}{\mathscr{S}}
\DeclareMathOperator{\A}{\mathscr{A}}
\DeclareMathOperator{\B}{\mathscr{B}}
\DeclareMathOperator{\E}{\mathscr{E}}
\DeclareMathOperator{\N}{\mathcal{N}}
\DeclareMathOperator{\M}{\mathcal{M}}
\DeclareMathOperator{\LL}{\mathscr{L}}
\DeclareMathOperator{\Y}{\mathscr{Y}}
\DeclareMathOperator{\F}{\mathscr{F}}
\DeclareMathOperator{\G}{\mathscr{G}}
\DeclareMathOperator{\Z}{\mathscr{Z}}
\DeclareMathOperator{\FF}{F}
\DeclareMathOperator{\GG}{G}
\DeclareMathOperator{\Aut}{Aut}
\DeclareMathOperator{\AAut}{\mathscr{A}\text{\kern -3pt {\calligra\large ut}}\,}
\DeclareMathOperator{\Char}{char}
\DeclareMathOperator{\Pic}{Pic}
\DeclareMathOperator{\Autt}{\mathscr{A}ut}
\DeclareMathOperator{\im}{Im}
\DeclareMathOperator{\rk}{rk}
\DeclareMathOperator{\codim}{codim}
\DeclareMathOperator{\Hilb}{Hilb}
\DeclareMathOperator{\Coh}{Coh}
\pgfplotsset{every axis/.append style={
                    axis x line=middle,    
                    axis y line=middle,    
                    axis line style={<->,color=blue}, 
                    xlabel={$x$},          
                    ylabel={$y$},          
            }}
\newtheorem{theorem}{Theorem}[subsection]
\newtheorem{lemma}[theorem]{Lemma}
\newtheorem{proposition}[theorem]{Proposition}
\newtheorem{corollary}[theorem]{Corollary}
\newtheorem{definition}[theorem]{Definition}
\newtheorem{example}[theorem]{Example}
\newtheorem{remark}[theorem]{Remark}
\title{Point Objects and Derived Equivalences of Twisted Derived Categories of Abelian Varieties}
\author{Ruoxi Li}
\address[R. Li]{Department of Mathematics\\
Univeristy of California, Berkeley\\
Berkeley, CA 94720\\
U.S.A.}
\email{ruoxi\_li@berkeley.edu}
\begin{document}

\begin{abstract}
    We study the notion of $1$-twisted semi-homogeneous vector bundles on $\Gm$-gerbes over abelian varieties, and classify point objects in the twisted derived categories of abelian varieties. As an application, we classify the twisted Fourier-Mukai partners of abelian varieties.
\end{abstract}
\maketitle

\section{Introduction}
Let $k$ be a field of characteristic 0, let $X/k$ be a torsor under an abelian variety $A/k$ of dimension $d$, and let $p \colon \X \rightarrow X$ be the $\Gm$-gerbe over $X$ corresponding to a class $\alpha \in \Br(X)$. Let $A^{\vee}$ denote the dual abelian variety of $A$ and let $\D(\X)^{(1)}$ denote the bounded derived category of 1-twisted coherent sheaves on $\X$; more concretely, it is the category of objects $\FF \in \D(\X)$ for which the action of the inertia group $\Gm$ on the cohomology sheaves is the standard action. The category $\D(\X)^{(1)}$ is often denoted $\D(X, \alpha )$ and can be described in terms of Azumaya algebras, see \cite{caldararu2000derived}. In this paper, we classify twisted Fourier-Mukai partners of abelian varieties and generalize several results in \cite{mukai1978semi} and \cite{de2022point} to the twisted setting. The key to our work in this paper is the following notion: 
\begin{definition}
    If $k$ is algebraically closed, a 1-twisted vector bundle $\E$ on $\X$ is called $\textbf{semi-homogeneous}$ if for every $\sigma \in \Aut^0_{\X}(k)$ there exists a 0-twisted line bundle $\LL$ such that \[\sigma^*\E \cong \E \otimes \LL\] 
    For general $k$, we call a 1-twisted vector bundle semi-homogeneous if its base change to an algebraic closure $\bar{k}$ of $k$ is semi-homogeneous.
\end{definition}

The group space $\Aut^0_{\X}$ is defined as following.

Let  $\AAut_{\X}$ be the fibered category over $k$ which to any $k$-scheme $S$ associates the groupoid of isomorphisms $\X \rightarrow \X$ inducing the identity on the stabilizer group $\Gm$ (ie. isomorphism of $\Gm$-gerbes). Let $\AAut^0_{\X}$ be its neutral connected component and let $\Aut^0_{\X}$ be the coarse space of $\AAut^0_{\X}$. By \cite[Theorem 1.1]{olsson2025twisted}, the map $\AAut^0_{\X} \rightarrow \Aut^0_{\X}$ is a $\Gm$-gerbe and in our setting ($\X$ is a $\Gm$-gerbe over an abelian variety $X$,) $\Aut^{0}_{\X}$ is an abelian variety.

\begin{remark}
    We can think of $\sigma \in \Aut^0_{\X}(k)$ as the set of isomorphism classes of automorphisms of $\X$ since $\Aut^0_{\X}$ is the coarse space of $\AAut^0_{\X}$.
\end{remark}

One of the key ingredients in studying point objects on abelian varieties is the nice properties of semi-homogeneous vector bundles on abelian varieties proved by Mukai. The idea of semi-homogeneous vector bundles on the gerbes is generalizing the classical results on abelian varieties \cite{mukai1978semi}, which can also be viewed as the case on the trivial $\Gm$-gerbe over an abelian variety. 

\begin{remark}
    We expect the $\Char{k} = 0$ assumption can be removed.
\end{remark}

\subsection{Basic Properties of Semi-Homogeneous Vector Bundles on Gerbes over Abelian Varieties}

 On $\Gm$-gerbes, we still get basic properties of semi-homogeneous vector bundles, proven in the untwisted case in \cite{mukai1978semi}. Over an algebraically closed field, we see that semi-homogeneous vector bundles still behave nicely under pullback and pushforward along isogenies. More importantly, we have the following structural results:
 
\begin{theorem}[Theorem \ref{linebundle}]

Assume $\bar{k} = k$. If $\E$ is simple (recall that $\E$ is called simple if $\Hom(\E,\E) = k$,) there exists an isogeny $f : X' \rightarrow X$ such that $\X' := \X \times_X X' \cong \BGm_{X'}$ and $\E \cong \tilde{f}_* \LL$ for some 1 twisted line bundle on $\X'$ where $\tilde{f}$ is the base change of $f$ under $p : \X \rightarrow X$.
\end{theorem}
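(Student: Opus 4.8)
The plan is to mimic Mukai's argument in the untwisted case \cite{mukai1978semi}, using the semi-homogeneity to produce an isogeny that "untwists" both the gerbe and the bundle. Since we are allowed to base change, assume $k = \bar k$ throughout. First I would study the set $\Sigma(\E) := \{(\sigma, \LL) \in \Aut^0_{\X}(k) \times \Pic^0(X) : \sigma^*\E \cong \E \otimes p^*\LL\}$. Using simplicity of $\E$ (so that $\LL$ is determined by $\sigma$ up to nothing, since $\Hom(\E,\E) = k$ forces the isomorphism to be unique up to scalar, hence $\LL$ is unique) one shows $\Sigma(\E)$ is the graph of a homomorphism of abelian varieties $\phi \colon \Aut^0_{\X}(k) \to \Pic^0(X) = A^\vee$, and by semi-homogeneity the first projection $\Sigma(\E) \to \Aut^0_{\X}$ is surjective; in particular $\dim \Sigma(\E) = d$. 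This $\Sigma(\E)$ is the twisted analogue of Mukai's group, and I would verify it is a closed subgroup scheme (of an abelian variety, hence an abelian subvariety up to connected component) by a Hilbert-scheme / flatness argument: the condition "$\sigma^*\E \cong \E \otimes p^*\LL$" is closed.

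Second, I would take the isogeny. Let $f \colon X' \to X$ be an isogeny whose kernel is chosen so that, after pulling back, the class $\alpha \in \Br(X)$ dies — this is possible because $\alpha$ restricted to the generic point lies in the Brauer group of a field of characteristic $0$, and more relevantly because $\Br(X)$ for $X$ an abelian variety is torsion and any class is split by an isogeny (pullback along multiplication by $n$ kills $n$-torsion Brauer classes on an abelian variety; this is where I'd cite the standard computation of $\Br$ of abelian varieties, or argue via the Leray spectral sequence and $\Pic^0$). Thus $\X' := \X \times_X X' \cong \BGm_{X'}$. The subtlety is to choose $f$ compatibly with $\Sigma(\E)$: I want $f$ to factor through (or be dominated by) the isogeny $X \to X/K$ associated to an appropriate subgroup, so that on $\X'$ the pulled-back bundle $\tilde f^* \E$ becomes a bundle all of whose semi-homogeneity twists by $\Pic^0$-bundles are realized by \emph{translations that fix it up to the trivial line bundle} — i.e.\ $\tilde f^*\E$ is homogeneous on $\BGm_{X'} = X' \times \BGm$, hence (twisted-)homogeneous, hence by the twisted analogue of the Mukai/Miyanishi description of homogeneous bundles it is a direct sum of $1$-twisted line bundles, and simplicity of $\E$ plus projection-formula bookkeeping forces $\tilde f_* \LL \cong \E$ for a single $1$-twisted line bundle $\LL$ on $\X'$.

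Third, I would run the descent/pushforward step carefully: given $\LL$ on $\X' \cong \BGm_{X'}$, the bundle $\tilde f_* \LL$ is semi-homogeneous on $\X$ (pushforward along isogenies preserves semi-homogeneity — cited from the "basic properties" subsection), and I must check it is isomorphic to $\E$ rather than merely sharing invariants. For this I use that $\tilde f^* \tilde f_* \LL \cong \bigoplus_{x \in \ker f} t_x^* \LL$ and compare with $\tilde f^* \E$; since $\E$ is simple, $\Hom(\tilde f_* \LL, \E) = \Hom(\LL, \tilde f^! \E)$ (Grothendieck duality for the finite flat $\tilde f$, which is étale in characteristic $0$ so $\tilde f^! = \tilde f^*$) and a dimension count of this Hom-space — using that $\LL$ is a summand of $\tilde f^*\E$ — produces a nonzero, hence (by simplicity and rank reasons) an isomorphism-inducing, map.

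The main obstacle I expect is the second step: arranging a \emph{single} isogeny $f$ that simultaneously (a) splits the gerbe $\X$ and (b) is adapted to $\Sigma(\E)$ so that $\tilde f^*\E$ is genuinely homogeneous on the split gerbe — and then invoking a twisted version of Mukai's structure theorem for homogeneous bundles (that they are direct sums of line bundles after such an isogeny). The untwisted input is classical, but on $\BGm_{X'}$ one must track the $\Gm$-weight-$1$ condition through the isogeny and through the decomposition, making sure no $0$-twisted summands sneak in and that the "line bundle" produced is genuinely $1$-twisted; handling the interaction of the two isogenies (the gerbe-splitting one and the semi-homogeneity one) by passing to a common refinement is the technical heart of the argument.
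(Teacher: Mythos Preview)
Your approach differs substantially from the paper's, and your second step contains a genuine gap. You propose choosing an isogeny $f$ so that $\tilde f^*\E$ becomes \emph{homogeneous} on $\X'$, but this is impossible in general: isogeny pullback induces an isomorphism $f^*\colon NS(X)_{\mathbb Q}\xrightarrow{\sim} NS(X')_{\mathbb Q}$, so $\delta(\tilde f^*\E)=f^*\delta(\E)$ vanishes if and only if $\delta(\E)$ already does. A homogeneous bundle has determinant in $\Pic^0$, hence slope zero; thus no isogeny alone will make $\tilde f^*\E$ homogeneous unless $\delta(\E)=0$ to begin with. (One can repair this by twisting by a line bundle $\N$ after pulling back, but identifying the correct $\N$ is essentially the content of the theorem; in the paper this is done in Proposition~\ref{homogeneous}, which \emph{uses} Theorem~\ref{linebundle} rather than proving it.) Relatedly, even when homogeneous, such a bundle need not be a direct sum of line bundles---unipotent factors appear---so ``direct sum of $1$-twisted line bundles'' is not automatic, and your ``rank reasons'' for $\tilde f_*\LL\cong\E$ are not justified without controlling $\deg f$ against $\rk\E$.

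The paper takes a different and sharper route. It never splits the gerbe as a separate step. Instead it studies the finite group $\Phi_0(\E)=\ker\bigl(\Phi^{00}(\E)\to X\bigr)\subset X^\vee$ of line bundles $\LL$ with $\E\otimes\LL\cong\E$, and kills it by a sequence of prime-degree isogenies $f$ dual to quotients $X^\vee\to X^\vee/G$ with $G\subset\Phi_0(\E)$ simple. At each step an explicit endomorphism-algebra computation, $\End(f^*\E)\cong k[x]/(x^l-1)$, forces $f^*\E\cong\E_1\oplus\cdots\oplus\E_l$ with the $\E_i$ simple and pairwise non-isomorphic, and Krull--Schmidt applied to $f_*f^*\E\cong\E^{\oplus l}$ yields $f_*\E_i\cong\E$; a short lemma shows $\lvert\Phi_0(\E_i)\rvert$ drops by a factor of $l$. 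The punchline you are missing is the final lemma: once $\Phi_0(\E)=0$, the map $\Phi^{00}(\E)\to X$ is an \emph{isomorphism}, and chasing the universal automorphism through this isomorphism produces a section $X\to\X$, so the gerbe is trivial \emph{automatically}. One then invokes Mukai's untwisted result that a simple bundle with $\Phi_0=0$ is a line bundle. In short, the gerbe trivialization and the reduction to a line bundle are accomplished by the \emph{same} isogeny, built from $\Phi_0(\E)$ rather than from the Brauer class; there is no need to merge two separately chosen isogenies.
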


\begin{theorem}[Theorem \ref{summand}]
Assume $\bar{k} = k$. A 1-twisted vector bundle $\E$ on $\X$ is semi-homogeneous if and only if $\E \cong \bigoplus_{\E_i}U_{\X, \E_i}$ where $U_{\X, \E_i}$ has a filtration whose successive quotients are isomorphic to a fixed $\E_i$ and $\E_i$'s are non-isomorphic simple semi-homogeneous vector bundles such that all $\delta(\E_i):= \frac{\det(\E_i)}{rk(\E_i)}$ are equal in $NS(X) \otimes \mathbb{Q}$, and only finitely many $U_{\X, \E_i} \neq 0$.
\end{theorem}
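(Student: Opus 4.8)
The plan is to follow Mukai's treatment of the untwisted case \cite{mukai1978semi}, feeding in Theorem \ref{linebundle} for simple bundles, and to organize everything around the group scheme
\[
\widehat{\Sigma}(\E) = \{\, (\sigma,\LL) \in \Aut^0_\X \times \Pic(X) : \sigma^*\E \cong \E \otimes \LL \,\},
\]
where $\Pic(X)$ is identified with the group of $0$-twisted line bundles on $\X$. Taking determinants gives $\sigma^*(\det\E) \cong (\det\E)\otimes\LL^{\otimes\rk\E}$, so the class of $\LL$ in $NS(X)$ is killed by $\rk\E$; as $NS(X)$ is finitely generated, $\widehat\Sigma(\E)$ is of finite type with identity component $\widehat\Sigma(\E)^0 \subseteq \Aut^0_\X \times \Pic^0(X)$, and its kernel $\Sigma(\E) = \{\LL : \E\otimes\LL\cong\E\}$ over $\Aut^0_\X$ is finite. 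By construction $\E$ is semi-homogeneous if and only if the first projection is surjective, equivalently $\widehat\Sigma(\E)^0$ is $d$-dimensional and maps onto $\Aut^0_\X$; in that case it is the graph of a $\mathbb{Q}$-isogeny $\Aut^0_\X \to \Pic^0(X)$. Representability of these loci is obtained, as in the untwisted case, from deformation theory on the proper Deligne--Mumford stack $\X$ and by working with the universal automorphism over $\AAut^0_{\X}$.

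First I would reduce to indecomposable bundles. Write $\E = \bigoplus_j \F_j$ (Krull--Schmidt), the $\F_j$ taken pairwise non-isomorphic after absorbing multiplicities. Each $\F_j$ is again semi-homogeneous: over the connected variety $\widehat\Sigma(\E)^0$ the collections $\{\sigma^*\F_j\}_j$ and $\{\F_j\otimes\LL\}_j$ are fibrewise isomorphic, for fixed $j_0$ the locus where $\sigma^*\F_{j_0}\cong\F_j\otimes\LL$ is closed (semicontinuity together with properness of the moduli of twisted semistable sheaves, using that semi-homogeneous bundles are semistable), these loci over $j$ are disjoint, and the one containing the origin is everything; hence each $\sigma^*\F_j$ is isomorphic to the corresponding $\F_j\otimes\LL$, so $\widehat\Sigma(\E)^0\subseteq\widehat\Sigma(\F_j)$ and, since $\widehat\Sigma(\F_j)^0$ has dimension at most $d$ while $\widehat\Sigma(\E)^0$ has dimension $d$, in fact $\widehat\Sigma(\F_j)^0=\widehat\Sigma(\E)^0$ for every $j$. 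The same matching argument applied to the Jordan--Hölder factors of an indecomposable $\F_j$ shows each such factor is simple semi-homogeneous with $\widehat\Sigma(\cdot)^0=\widehat\Sigma(\F_j)^0$. Finally, for non-isomorphic simple semi-homogeneous $S,S'$ with $\delta(S)=\delta(S')$ one has $\Ext^1_\X(S,S')=0$: the bundle $S^\vee\otimes S'$ is $0$-twisted with $\delta=0$, hence descends to a homogeneous bundle $G$ on $X$ with $\mathcal{O}_X$ not among its summands (since $\Hom(S,S')=0$), so $H^1(X,G)=0$. Orthogonality of these $\Ext$-groups in both directions yields a block decomposition of the category of sheaves with prescribed simple subquotients, so an indecomposable $\F_j$ has a single Jordan--Hölder factor up to isomorphism, say $\E_i$; grouping the $\F_j$ according to this common factor defines the bundles $U_{\X,\E_i}$, each filtered with successive quotients $\E_i$, and only finitely many are nonzero since $\rk\E<\infty$.

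It then remains to match the invariants $\delta(\E_i)$ with the subvarieties $\widehat\Sigma^0$. For simple semi-homogeneous $\E_i$, Theorem \ref{linebundle} writes $\E_i\cong\tilde f_*\LL$ with $f\colon X'\to X$ an isogeny trivializing the gerbe, and a direct computation with $\tilde f_*\LL$ --- the twisted analogue of Mukai's determination of $\Phi(f_*M)$ --- identifies $\widehat\Sigma(\E_i)^0$ with the graph of the homomorphism $\phi_{\delta(\E_i)}\colon\Aut^0_\X\to\Pic^0(X)$ attached to $\delta(\E_i)\in NS(X)\otimes\mathbb{Q}$; since $\phi$ is injective on $NS(X)\otimes\mathbb{Q}$, this subvariety and the class $\delta(\E_i)$ determine one another. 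Moreover $U_{\X,\E_i}$ is semi-homogeneous with $\widehat\Sigma(U_{\X,\E_i})^0=\widehat\Sigma(\E_i)^0$: by induction on the filtration length, given $0\to U'\to U\to\E_i\to 0$ with $U'$ filtered by $\E_i$ and $(\sigma,\LL)\in\widehat\Sigma(\E_i)^0$, the extension class lies in $\Ext^1_\X(\E_i,U')\cong H^1(X,p_*(\E_i^\vee\otimes U'))$ with $p_*(\E_i^\vee\otimes U')$ homogeneous, and $\sigma$ acts on this cohomology through a translation of $X$, hence trivially, so $\sigma^*U\cong U\otimes\LL$. Assembling the pieces: in the forward direction $\widehat\Sigma(\E_i)^0=\widehat\Sigma(\F_j)^0=\widehat\Sigma(\E)^0$ is one subvariety, so all $\delta(\E_i)$ agree; conversely, if all $\delta(\E_i)$ agree then the graphs $T:=\widehat\Sigma(\E_i)^0=\widehat\Sigma(U_{\X,\E_i})^0$ coincide, $T\to\Aut^0_\X$ is surjective, and $T\subseteq\bigcap_i\widehat\Sigma(U_{\X,\E_i})\subseteq\widehat\Sigma(\E)$, so $\E$ is semi-homogeneous.

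The main obstacle is the twisted $\Phi$-computation just invoked: identifying $\widehat\Sigma(\tilde f_*\LL)^0$ with the graph of $\phi_\delta$ requires tracking how the trivializing isogeny $f$, the Brauer class it kills, the gerbe structure, and the translation action of $\Aut^0_\X$ on $X$ interact under pushforward --- in particular one must make precise the homomorphism $\phi_\delta\colon\Aut^0_\X\to\Pic^0(X)$ attached to a $\mathbb{Q}$-divisor class in the twisted setting and reconcile it with the description of $\Aut^0_\X$ from \cite{olsson2025twisted}. A secondary technical point, needed for the connectedness arguments, is to establish that $\widehat\Sigma(\E)$ is a finite-type subgroup scheme and that the semi-homogeneous bundles in play are semistable, so that the moduli-theoretic equalizer arguments genuinely apply.
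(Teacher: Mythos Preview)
Your plan is viable in outline but takes a genuinely different route from the paper. You attempt to run Mukai's argument intrinsically on $\X$: organize everything around the group scheme $\widehat\Sigma(\E)\subset\Aut^0_\X\times\Pic(X)$, use Krull--Schmidt plus connectedness of $\widehat\Sigma(\E)^0$ to propagate semi-homogeneity to summands and Jordan--H\"older factors, then invoke $\Ext$-orthogonality (via homogeneity of $S^\vee\otimes S'$ on $X$) to force each indecomposable piece to have a single simple subquotient, and finally identify $\widehat\Sigma(\E_i)^0$ with the graph of $\phi_{\delta(\E_i)}$ to match the invariants. The paper instead fixes an isogeny $\pi\colon Y\to X$ trivializing the gerbe and works in the descent category $\mathcal C$ of sheaves on $\Y\cong\BGm_{,Y}$ with $\ker\pi$-equivariant structure: on $Y$ one is literally in Mukai's untwisted setting, so $\pi^*\E\otimes\chi^{-1}$ is semistable and Mukai's Propositions 6.13--6.19 apply verbatim; Proposition~\ref{filtration} then constructs a filtration \emph{compatible with descent}, which comes back down to $\X$, and the $\Ext$-orthogonality you need is established only later (Corollary~\ref{relation}), again through the trivialization. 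The two obstacles you flag --- the twisted $\Phi$-computation identifying $\widehat\Sigma(\tilde f_*\LL)^0$ with the graph of $\phi_\delta$, and the direct verification that $1$-twisted semi-homogeneous bundles are semistable so that a Jordan--H\"older filtration even exists --- are exactly the steps the paper's approach sidesteps by exporting them to $Y$, where Mukai has already done the work; conversely, the paper pays for this by having to build the descent-compatible filtration of Proposition~\ref{filtration} by hand. Your route would be cleaner if those two twisted computations were in hand, but as written they are genuine gaps, and filling them without passing through a trivialization is not obviously easier than the paper's method.
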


\begin{remark}
    $\delta(\E)$ is well-defined for $1$-twisted vector bundles $\E$ on $\X$. Indeed, $\det(\E)$ is a line bundle on the trivial $\Gm$-gerbe over $X$ since we have $rk(\E)  \alpha = 0 \in \Br(X)$. On the trivial gerbe the category of 0-twisted coherent sheaves and the category of 1-twisted sheaves are equivalent, so we can view $\det(\E)$ as a line bundle on $X$.
\end{remark}
\subsection{Point Objects in Twisted Derived Categories over Abelian Varieties}
Using the results of semi-homogeneous vector bundles, we can analyze the behavior of semi-homogeneous complexes on $\Gm$-gerbes over abelian varieties, which leads to a classification of point objects. The basic idea is similar to the non-twisted case in \cite{de2022point}. This gives us our second main result, generalizing the classification of point objects on abelian varieties by de Jong and Olsson, \cite[Theorem 1]{de2022point}.

\begin{theorem}[Section 5]
\label{point}
    Let $\FF \in \D(X, \alpha)$ be an object that satisfies the following:

    \begin{enumerate}
        \item $\Ext^i(\FF, \FF) = 0$ for all $i < 0$;
        \item The k-vector space $\Ext^0(\FF, \FF)$ has dimension 1;
        \item the k-vector space $\Ext^1(\FF, \FF)$ has dimension $\leq d$.
    \end{enumerate}

    Then \[\FF \cong \tilde{i}_* \E[r]\] where $r$ is an integer, $i \colon Z \xhookrightarrow{} X$ is a torsor under a sub-abelian variety $H \subset A$, and $\tilde{i}$ is the base change of $i$ under $p$, and $\E$ is a semi-homogeneous vector bundle on $\mathscr{Z} := \X \times_X Z$.
    
    Conversely, if $i \colon Z \xhookrightarrow{} X$ is a torsor under a sub-abelian variety $H \subset A$, $\tilde{i}$ is the base change of $i$ under $p$, and $\E$ is a semi-homogeneous vector bundle on $\mathscr{Z} := \X \times_X Z$, then $\FF = \tilde{i}_* \E$ satisfies the conditions above.
\end{theorem}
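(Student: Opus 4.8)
The plan is to prove the two implications separately, in both cases first reducing to $k=\bar k$: in characteristic $0$ the groups $\Ext^{i}$ commute with extension of the ground field, and the data produced below ($Z$, the shift $r$, and the bundle $\E$) is canonically attached to $\FF$ --- $Z=\mathrm{Supp}(\FF)_{\mathrm{red}}$, $r$ is determined by the single degree carrying cohomology, and $\tilde i_{*}\E$ is the unique nonzero cohomology sheaf of $\FF$ --- hence Galois-stable, so the conclusion descends. For the converse, $\tilde i$ is a closed immersion, so $\tilde i_{*}$ is exact and $\FF=\tilde i_{*}\E$ is a $1$-twisted sheaf, giving (1); the conormal bundle of $Z$ in $X$ is trivial of rank $c:=d-\dim Z$ (both are torsors under $H\subset A$), so pulling the Koszul resolution of $\mathcal O_{\mathscr{Z}}$ back along $\tilde i$ yields $\Ext^{\bullet}_{\X}(\FF,\FF)\cong\Ext^{\bullet}_{\mathscr{Z}}(\E,\E)\otimes_{k}\bigwedge^{\bullet}k^{c}$. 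One then computes $\Ext^{\bullet}_{\mathscr{Z}}(\E,\E)\cong\bigwedge^{\bullet}k^{\dim Z}$ by reducing to a simple $\E$ via Theorem~\ref{summand} and using Theorem~\ref{linebundle} to write $\E\cong\tilde f_{*}\LL$ for an (étale, as $\Char k=0$) isogeny $f\colon Z'\to Z$ with $\mathscr{Z}'\cong\BGm_{Z'}$ and $\LL$ a $1$-twisted line bundle; the push-pull $\tilde f^{*}\tilde f_{*}\LL$ splits over $\ker\widehat f$, only the trivial summand contributes, and the sheaf of endomorphisms of $\LL$ is $\mathcal O_{Z'}$, so $\Ext^{\bullet}_{\mathscr{Z}}(\E,\E)\cong H^{\bullet}(Z',\mathcal O_{Z'})$. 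Hence $\Ext^{\bullet}_{\X}(\FF,\FF)\cong\bigwedge^{\bullet}k^{d}$, which gives (1)--(3).

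For the forward direction I would follow the method of \cite{de2022point}. The first goal is to show $Z:=\mathrm{Supp}(\FF)_{\mathrm{red}}$ is a torsor under an abelian subvariety $H\subset A$. Introduce the symmetry group $K(\FF)\subset\Aut^{0}_{\X}\times\Pic^{0}(X)$ of pairs $(\sigma,\xi)$ with $\sigma^{*}\FF\cong\FF\otimes P_{\xi}$; this is the twisted analogue of the $A\times\widehat A$ used in \cite{de2022point}, and since $P_{\xi}$ is $0$-twisted the operations $\sigma^{*}$ and $\otimes P_{\xi}$ preserve $\D(X,\alpha)$ and conditions (1)--(3). By (2) the object $\FF$ is simple, hence defines a point of the moduli space of simple objects of $\D(X,\alpha)$, whose tangent space at $[\FF]$ is $\Ext^{1}(\FF,\FF)$; so the orbit of $\FF$ has dimension $\le\dim\Ext^{1}(\FF,\FF)\le d$. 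Writing $H\subset A$ for the image of $K(\FF)$ under the homomorphism recording the translation of $X$ induced by $\sigma$ and $K_{0}=\{\xi:\FF\otimes P_{\xi}\cong\FF\}$ for its $\Pic^{0}$-part, this translates (as in \cite{de2022point}) into $\dim K_{0}+\dim H\ge d$. Since $\otimes P_{\xi}$ is exact, each $\mathcal H^{i}(\FF)$ satisfies $\mathcal H^{i}(\FF)\otimes P_{\xi}\cong\mathcal H^{i}(\FF)$ for $\xi\in K_{0}$, so comparing determinants along a top-dimensional component $W$ of $Z$ shows $P_{\xi}|_{W}$ is torsion, whence $\dim K_{0}\le d-\dim Z$; therefore $\dim H\ge\dim Z$. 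Finally $Z$ is invariant under translation by $H$ (as $\sigma^{*}\FF$ and $\FF\otimes P_{\xi}$ have equal support) and connected (by (2)), so it must be a single $H$-coset, i.e. a torsor under $H$, and all the inequalities become equalities.

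Next I would reduce to the case $Z=X$. Because $\FF$ is set-theoretically supported on $Z$ and $Z\hookrightarrow X$ is a regular immersion, one checks $\FF$ has finite Tor-dimension over $\mathscr{Z}=\X\times_{X}Z$, so $\FF\cong\tilde i_{*}\G$ for some $\G\in\D(\mathscr{Z})$, and the Koszul isomorphism above shows $\G$ again satisfies (1)--(3) and has full support. Replacing $X,\X,\FF$ by $Z,\mathscr{Z},\G$, assume $\mathrm{Supp}(\FF)=\X$. Rerunning the previous step, the determinant bound now forces $K_{0}$ finite, hence $\dim H\ge\dim X$, so $H$ is the whole abelian variety: for every $\sigma\in\Aut^{0}_{\X}(k)$ there is a $0$-twisted $P_{\xi}$ with $\sigma^{*}\FF\cong\FF\otimes P_{\xi}$, i.e. $\FF$ is a semi-homogeneous complex. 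Applying $\mathcal H^{i}$ gives $\sigma^{*}\mathcal H^{i}(\FF)\cong\mathcal H^{i}(\FF)\otimes P_{\xi}$ with the same $\xi$ for all $i$; hence each nonzero $\F^{i}:=\mathcal H^{i}(\FF)$ has full support, its torsion subsheaf and non-locally-free locus (being intrinsic) are translation-invariant proper closed subsets and hence empty, so $\F^{i}$ is a semi-homogeneous $1$-twisted vector bundle, and the $\F^{i}$ have a common symmetry and hence (by the twisted Mukai theory, Theorems~\ref{linebundle} and~\ref{summand}) a common $\delta\in NS(X)\otimes\mathbb{Q}$. To conclude that $\FF$ is concentrated in one degree, feed this into the spectral sequence $E_{2}^{p,q}=\bigoplus_{i}\Ext^{p}_{\X}(\F^{i},\F^{i+q})\Rightarrow\Ext^{p+q}(\FF,\FF)$: using $\Ext^{<0}(\FF,\FF)=0$, the vanishing of negative $\Ext$'s among the sheaves $\F^{i}$, and $\dim\Ext^{0}(\FF,\FF)=1$ --- which must absorb the $\#\{i:\F^{i}\ne0\}$ independent classes $\mathrm{id}_{\F^{i}}\in E_{2}^{0,0}$ --- a truncation argument as in \cite{de2022point} forces exactly one $\F^{i}$ to be nonzero, so $\FF\cong\F^{r}[-r]=\E[r]$ with $\E$ a semi-homogeneous vector bundle on $\mathscr{Z}$.

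The parts I expect to be hardest, both already delicate in \cite{de2022point}, are the passage from the reduced support $Z$ to the assertion that $\FF$ is genuinely the pushforward of an object on $\mathscr{Z}$ with its reduced structure (so that the Koszul computation applies), and the final concentration-in-one-degree step. What is genuinely new in the twisted setting --- and what must be put in place --- is: a sufficiently explicit description of $\Aut^{0}_{\X}$ (via \cite{olsson2025twisted}) to make the symmetry-group dimension count of the second paragraph go through, keeping careful track of the $\Pic^{0}$-ambiguity in lifting translations of $X$ to automorphisms of $\X$; twisted Serre duality on the smooth proper Deligne--Mumford stack $\X$ together with the existence of the moduli space of simple twisted complexes and its deformation theory; and the twisted versions of Mukai's structural results, most of which are supplied by Theorems~\ref{linebundle} and~\ref{summand}. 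Granting these, the argument is a faithful gerbe-twisting of de Jong--Olsson's, with $\Aut^{0}_{\X}$ taking over the role of $A$.
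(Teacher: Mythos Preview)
Your overall architecture---stabilizer dimension count to pin down the reduced support $Z$ as an $H$-torsor, then an analysis on $Z$---matches the paper's, as does the converse via Koszul and Theorem~\ref{linebundle}. But the step you flag as hardest is a genuine gap, and the paper closes it by a different mechanism than you anticipate.

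Your proposed reduction ``$\FF$ has finite Tor-dimension over $\mathscr{Z}$, so $\FF\cong\tilde i_*\G$ for some $\G\in\D(\mathscr{Z})$'' does not follow: finite Tor-amplitude of $L\tilde i^*\FF$ says nothing about $\FF$ lying in the essential image of $\tilde i_*$ as a \emph{complex}; a priori the gluing data between the cohomology sheaves could live on a thickening of $Z$. The paper never attempts this. Instead, Proposition~\ref{data} uses $\End(\FF)=k$ together with the projection to the quotient abelian variety $B/A$: choosing an affine neighborhood $\Spec R$ of $0\in B/A$, the ring $R$ acts on $\FF$ through $\End(\FF)=k$, and since elements of the maximal ideal are nilpotent on the support they must act by the scalar $0$. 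This shows each $\mathcal H^s(\FF)$ is \emph{scheme}-theoretically supported on $\mathscr{A}$, i.e.\ $\mathcal H^s(\FF)\cong\tilde i_*\G_s$---but says nothing yet about $\FF$ itself.

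The concentration step is then handled not by your spectral sequence but by the $\Sigma$-invariant developed in Section~4. Lemma~\ref{decomp} combined with $\End(\FF)=k$ forces all nonzero $\G_s$ to have the same singleton $\Sigma(\G_s)=\{\sigma\}$; then by Lemma~\ref{Ext}, if $m<n$ are the bottom and top nonzero degrees one gets $\Ext^{m-n}(\FF,\FF)\cong\Hom_{\mathscr{A}}(\G_n,\G_m)\neq 0$, contradicting condition~(1). Only \emph{after} concluding that $\FF$ is a sheaf does one get $\FF=\tilde i_*\G$ for free. Your spectral sequence route would need exactly this kind of nonvanishing input, and the survival of the classes $\mathrm{id}_{\F^i}\in E_2^{0,0}$ to $E_\infty$ is not automatic (differentials out of $E_r^{0,0}$ need not vanish), so as written that argument is incomplete.

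For the converse your computation $\Ext^\bullet_\X(\FF,\FF)\cong\bigwedge^\bullet k^d$ is morally right but stronger than what the paper proves or needs: the paper only extracts the inequality $\dim\Ext^1\le d$ from the truncation triangle $\mathcal H^{-1}(L\tilde i^*\tilde i_*\FF)[1]\to\tau_{\ge -1}L\tilde i^*\tilde i_*\FF\to\F$, which avoids having to justify the full splitting of $L\tilde i^*\tilde i_*\F$. Also, your ``reduce to simple $\E$ via Theorem~\ref{summand}'' is unnecessary---if $\E$ is not simple then condition~(2) already fails, so simplicity is part of the hypothesis, and you can go straight to Theorem~\ref{linebundle} (with Corollary~\ref{distinct} to kill the cross-terms in $\tilde f^*\tilde f_*\LL$, which in the twisted setting is $\bigoplus_{a\in\ker f} t_a^*\LL\otimes\delta_a$ rather than a sum over $\ker\widehat f$).
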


\begin{definition}
    Any $\FF \in \D(\X)^{(1)}$ that satisfies the conditions in Theorem \ref{point} is said to be a \textbf{point object}.
\end{definition}

\subsection{Twisted Fourier Mukai Partners with Abelian Varieties}
The main application of Theorem \ref{point} is the following. See Theorem \ref{partner} for a slighty stronger statement which concretely classifies the Fourier-Mukai partners of an abelian variety.

\begin{theorem}[Section 6]
    Let $X$ be a torsor under an abelian variety $A$, and $Y$ a smooth projective variety. Let $\alpha \in \Br(X), \beta \in \Br(Y)$. If $\D(X, \alpha) \cong \D(Y, \beta)$, then $Y$ is also a torsor under an abelian variety of dimension $\dim{A}$.
    
\end{theorem}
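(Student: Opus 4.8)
The plan is to realize $Y$ as a connected component of a moduli space of point objects on $\X$, and to show that any such component is homogeneous under a connected commutative algebraic group, hence a torsor under an abelian variety. I work over an algebraic closure $\bar k$ throughout and descend at the end; this is legitimate because $\Phi$ is defined over $k$. As a first step I pin down $\dim Y$ via Serre duality: since $X$ is an abelian variety, $\D(X,\alpha)$ is the derived category of an Azumaya algebra on $X$, so its Serre functor is $(-)\otimes\omega_X[\dim X]=[d]$; an equivalence commutes with Serre functors, so $\D(Y,\beta)$ also has Serre functor $[d]$, and comparing with $(-)\otimes\omega_Y[\dim Y]$ forces $\dim Y=d$ (and $\omega_Y\cong\mathcal O_Y$). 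Consequently, for every closed point $y\in Y(\bar k)$ the twisted skyscraper $k(y)$ is a point object of $\D(Y,\beta)$: conditions $(1)$ and $(2)$ are immediate, and $\Ext^1(k(y),k(y))\cong T_yY$ has dimension $d$.

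The three conditions defining a point object only involve $\Ext$-groups, hence are preserved by any exact equivalence; moreover, writing $\Phi$ as a twisted Fourier--Mukai transform (twisted Orlov representability, Canonaco--Stellari), $\Psi:=\Phi^{-1}$ induces an isomorphism between the moduli space of simple objects of $\D(Y,\beta)$ and that of $\D(X,\alpha)$, carrying the locus of point objects to the locus of point objects. The morphism $Y\to\{\text{point objects of }\D(Y,\beta)\}$, $y\mapsto[k(y)]$, is a monomorphism which is étale (the Kodaira--Spencer map $T_yY\to\Ext^1(k(y),k(y))$ is an isomorphism of $d$-dimensional vector spaces), hence an open immersion; since $Y$ is projective and connected it identifies $Y$ with a connected component, so via $\Psi$ with a connected component $M$ of the moduli space of point objects of $\D(X,\alpha)$. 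By Theorem \ref{point}, the objects parametrized by $M$ all have the form $\tilde i_*\E[r]$ with $Z\subset X$ a torsor under a sub-abelian variety $H\subset A$ and $\E$ a semi-homogeneous bundle on $\Z=\X\times_X Z$ which is simple by condition $(2)$; the discrete invariants — the subvariety $H$ (a point of the countable set of sub-abelian varieties of $A$), the shift $r$, and the class $\delta(\E)\in NS(X)\otimes\mathbb Q$ — are locally constant, hence constant on the connected $Y$. After replacing $\Phi$ by $\Phi[r]$ we may take $r=0$, so $Y\cong M=:M_{H,\delta}$.

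It remains to prove $M_{H,\delta}$ is a torsor under an abelian variety of dimension $d$. Write $e=\dim H$; the normal bundle $N_{Z/X}$ is trivial of rank $d-e$, so the Koszul resolution and adjunction give $\Ext^1(\tilde i_*\E,\tilde i_*\E)\cong\Ext^1_{\Z}(\E,\E)\oplus\bar k^{\,d-e}$, while Theorem \ref{linebundle} (realizing $\E$ as the pushforward of a twisted line bundle along an isogeny and computing the cohomology of its endomorphisms) gives $\dim\Ext^1_{\Z}(\E,\E)=e$; hence every tangent space of $M_{H,\delta}$ has dimension $d$, so $\dim M_{H,\delta}\le d$. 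Let $G$ be the connected commutative algebraic group generated by translations by $X$ (which fix $\alpha$), twists by $\Pic^0(X)$, and the gerbe automorphisms $\AAut^0_{\X}$; it acts on $M_{H,\delta}$. A stabilizer computation shows each $G$-orbit has dimension $d$: a translation preserving the support of $\tilde i_*\E$ must lie in $H$ (dimension $e$), and then, $\E$ being semi-homogeneous while $\Pic^0(X)\to\Pic^0(Z)$ is surjective, the remaining freedom — governed by Mukai's subgroup $\Sigma(\E)\subset Z\times\Pic^0(Z)$, which has dimension $e$ and finite intersection with $\{0\}\times\Pic^0(Z)$ because $\E$ is simple — accounts for another $d-e$ dimensions, so already the $X\times\Pic^0(X)$-orbit has dimension $2d-d=d$. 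Since every orbit in $M_{H,\delta}$ has dimension $d\ge\dim M_{H,\delta}$, each irreducible component of $M_{H,\delta}$ is a single $G$-orbit; thus $M_{H,\delta}$ is smooth and homogeneous under a connected commutative algebraic group, and, being isomorphic to the projective variety $Y$, is proper. A smooth proper variety homogeneous under a connected commutative algebraic group is a torsor under an abelian variety — the affine part of $G$ has a fixed point on it and, $G$ being commutative and transitive, acts trivially, so $M_{H,\delta}$ is a homogeneous space of an abelian variety, hence such a torsor. Therefore $Y\cong M_{H,\delta}$ is a torsor under an abelian variety of dimension $d=\dim A$, and Galois descent extends the conclusion to arbitrary $k$.

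The main obstacle is the stabilizer/orbit computation of the third paragraph, that is, proving $M_{H,\delta}$ is a single $G$-orbit: this is the twisted analogue of Mukai's structural theorem that the simple semi-homogeneous sheaves of a fixed numerical type on an abelian variety form a single orbit under translations and degree-zero twists, and its proof is precisely where Theorems \ref{linebundle} and \ref{summand} and the behaviour of semi-homogeneous twisted bundles under isogenies must be used in full, together with the $\Ext^1$-computation underlying $\dim M_{H,\delta}=d$. A secondary technical ingredient is the functoriality of the moduli space of simple complexes under a twisted Fourier--Mukai equivalence, needed to transport the skyscraper component to the $\X$ side.
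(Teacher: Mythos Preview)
Your strategy matches the paper's arc---map $Y$ into the moduli of point objects on $\X$ via the Fourier--Mukai kernel, land in a connected component $M_{(S,\delta)}$, and show that component is a torsor under an abelian variety of dimension $d$---but the two proofs diverge at the last step, and the paper's route is both shorter and avoids the obstacle you flag.

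Where you argue homogeneity under a group action and compute orbit dimensions, the paper instead builds $M_{(S,\delta)}$ directly as an extension
\[
0 \longrightarrow S^{\vee}/\Phi_0 \longrightarrow M_{(S,\delta)} \longrightarrow A/S \longrightarrow 0,
\]
where $\Phi_0=\Phi_0(\E)$ for any simple semi-homogeneous $\E$ with $\delta(\E)=\delta$.  The transitivity of $S^{\vee}/\Phi_0$ on the fibres of $M_{(S,\delta)}\to A/S$ is exactly Corollary~\ref{relation}: two simple semi-homogeneous bundles with the same $\delta$ differ by a degree-zero line-bundle twist.  This is the precise content of your ``stabilizer/orbit computation'', and once isolated as a corollary the extension description falls out in a few lines, giving $\dim M_{(S,\delta)}=\dim(A/S)+\dim S^{\vee}=d$ without any tangent-space bound.

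For the identification $Y\cong M_{(S,\delta)}$ the paper also takes a different path.  Rather than your \'etale-monomorphism argument (open immersion $+$ properness $\Rightarrow$ component), the paper proves independently that the universal family on $\X\times\mathscr{M}_{(S,\delta)}$ already induces an equivalence $\Psi\colon \D(M_{(S,\delta)},\gamma^{-1})\to \D(X,\alpha)$ (Proposition~\ref{equivalence}), and then factors $\Psi=\Gamma\circ\tilde g^{*}$; since $\Psi$ and $\Gamma$ are equivalences, so is $\tilde g^{*}$, forcing $g$ to be an isomorphism on the nose.  This sidesteps the question of whether $M_{(S,\delta)}$ might have further components and simultaneously pins down the Brauer class $\beta$.

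Your Serre-functor opening, giving $\dim Y=d$ and $\omega_Y\cong\mathcal O_Y$ before anything else, is a nice shortcut the paper does not take; in the paper these come out only at the end as consequences of $Y\cong M_{(S,\delta)}$.  A small correction: your $\Ext^1$ splitting $\Ext^1(\tilde i_*\E,\tilde i_*\E)\cong\Ext^1_{\Z}(\E,\E)\oplus\bar k^{\,d-e}$ is not literally a direct sum---the paper gets only a short exact sequence from the Koszul filtration (end of Section~5)---but the dimension count you need survives.
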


\begin{remark}
    The non-twisted case is studied in \cite[Theorem 2.4]{lane2024semi} and \cite[proposition 3.3]{kurama2024fourier}.
\end{remark}

\subsection{Funding}
This work was partially funded by the Simons Collaboration on Perfection in Algebra, Geometry, and Topology. 

\subsection{Acknowledgment}

Thanks to Martin Olsson and Noah Olander for many helpful comments and conversations.


\section{Representable Functors}

\subsection{Moduli of Complexes}
In this subsection, let $X \rightarrow S$ be a proper smooth morphism of finite presentation between
 schemes. Let $\X \rightarrow X$ be a $\Gm$-gerbe corresponding to $\alpha \in \Br(X)$. Let $\mathscr{D}_{\X}$ be the fibered category which to any $T \rightarrow S$ associates the groupoid of objects $E \in \D(\X)^{(1)}$ which are relatively perfect over $T$ and such that $\Ext^i(E_s, E_s) = 0$ for all geometric points $s \rightarrow S$ and all $i <0$.

\begin{proposition}
\label{complexes}
    $\mathscr{D}_{\X}$ is an algebraic stack.
\end{proposition}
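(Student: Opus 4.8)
The plan is to reduce the statement to an already-known algebraicity result for the stack of complexes on a proper algebraic stack by treating the $\mathbb{G}_m$-gerbe $\X$ as the ambient space. Recall that by work of Lieblich (building on Inaba, Toda, and the Artin–Zhang machinery) the stack $\mathscr{D}_{\X/S}$ of \emph{all} relatively perfect complexes $E$ on a proper morphism of finite presentation $\X \to S$ with $\Ext^i(E_s,E_s)=0$ for all $i<0$ is an algebraic stack, locally of finite presentation over $S$; see \cite{lieblich2006moduli}. Since $\X \to X \to S$ is proper (a $\mathbb{G}_m$-gerbe over a proper scheme is a proper stack) and of finite presentation, that theorem applies verbatim with $\X$ in place of an ordinary proper scheme. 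So the only thing to check is that cutting out the \emph{$1$-twisted} complexes inside $\mathscr{D}_{\X/S}$ is an operation that preserves algebraicity, and in fact is essentially the selection of an open-and-closed substack.

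Here are the steps I would carry out, in order. First, observe that $\D(\X)$ decomposes as the direct sum $\bigoplus_{n \in \mathbb{Z}} \D(\X)^{(n)}$ of the eigen-subcategories for the inertial $\mathbb{G}_m$-action, and that this decomposition is compatible with base change on $S$ and with the formation of $\mathcal{E}xt$-sheaves; concretely, a relatively perfect complex $E$ is $1$-twisted if and only if the canonical $\mathbb{G}_m$-action on $E$ (coming from the inertia of $\X$) is through the standard character, a condition one can detect fibrewise. Second, I would show that the condition ``$E_s \in \D(\X_s)^{(1)}$ for all geometric points $s$'' is open and closed on $\mathscr{D}_{\X/S}$: over a connected base the weight-$n$ part of a relatively perfect complex is itself relatively perfect and its formation commutes with base change (the $\mathbb{G}_m$-action is linear, so taking the weight-$n$ summand is an exact idempotent operation), hence the locus where $E = E^{(1)}$ (equivalently $E^{(n)} = 0$ for $n \neq 1$) is cut out by the vanishing of a perfect complex, which is open, and its complement is likewise open since $E^{(n)}$ varies in a family. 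Thus $\mathscr{D}_{\X} \subset \mathscr{D}_{\X/S}$ is an open substack, and an open substack of an algebraic stack is algebraic. Alternatively and more cleanly: by \cite{caldararu2000derived} the category $\D(\X)^{(1)}$ is equivalent to $\D(X,\alpha)$, the derived category of modules over an Azumaya algebra $\mathscr{A}$ on $X$; one can then run Lieblich's construction directly for the $X$-linear dg-category of perfect $\mathscr{A}$-complexes (Lieblich in fact works in precisely this generality, with twisted sheaves), which gives algebraicity of $\mathscr{D}_{\X}$ on the nose without any open-immersion argument.

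Finally I would assemble the pieces: fix one of the two routes above (I would take the Azumaya-algebra route since it is the most direct), cite the relevant algebraicity theorem, verify its hypotheses — $X \to S$ proper smooth of finite presentation, $\mathscr{A}$ a coherent sheaf of algebras, flat and finite over $\mathcal{O}_X$, so that the category of perfect $\mathscr{A}$-complexes is an $S$-linear admissible subcategory of $\D_{\mathrm{perf}}(X)$ stable under the relevant operations — and conclude. The main obstacle, to the extent there is one, is purely bookkeeping: making sure that ``relatively perfect over $T$'' and ``$\Ext^{<0}$ vanishing on geometric fibres'' mean the same thing in the twisted/Azumaya formulation as in Lieblich's, and that the equivalence $\D(\X)^{(1)} \simeq \D(X,\alpha)$ is compatible with base change on $S$ (it is, since $\mathscr{A}$ and the equivalence are constructed étale-locally on $X$ and descend). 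There is no substantive geometric difficulty here; the content is entirely in invoking the existing moduli-of-complexes technology in the twisted setting, which was anticipated by Lieblich.
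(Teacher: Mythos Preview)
Your proposal is correct, but the paper takes a different concrete route. Rather than working on the gerbe $\X$ directly or through an Azumaya algebra on $X$, the paper realizes $\D(\X_T)^{(1)}$ as an $X_T$-linear semi-orthogonal component of $\D(Y_T)$, where $Y \to X$ is the Brauer--Severi variety attached to $\alpha^{-1}$ (citing \cite{bergh2021decompositions}); this gives a functorial embedding $\mathscr{D}_{\X} \hookrightarrow \mathscr{D}_Y$ with $Y$ an honest smooth projective scheme over $S$, so Lieblich's theorem \cite[Theorem~4.2.1]{lieblich2005moduli} applies in its original scheme-theoretic form. Membership in the relevant semi-orthogonal component is then cut out by the vanishing of $Rq_* R\Hom(F,G_i)$ for the finitely many generators $G_i$ of the complementary components, an open condition, so $\mathscr{D}_{\X} \hookrightarrow \mathscr{D}_Y$ is an open immersion. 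Your weight-decomposition route is conceptually cleaner but requires Lieblich's result for proper algebraic stacks, a genuine (if well-known) extension of what the paper cites; your Azumaya route and the paper's Brauer--Severi route are close cousins---both exhibit the twisted category as an admissible subcategory of an untwisted one---but the paper's version has the virtue of making the open-immersion argument completely explicit as a finite list of relative $\Ext$-vanishing conditions on a scheme.
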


\begin{proof}
     By \cite[Theorem 6.2]{bergh2021decompositions}, for any $T/S$, $\D(\X_T)^{(1)}$ is a $X_T$ linear semi-orthogonal component of $\D(Y_T)$ for some $Y$ a Brauer Severi variety over $X$ (and hence $Y_T$ is a Brauer Severi variety over $X_T$) that corresponds to $\alpha^{-1} \in \Br(X)$,
    \[ \D(Y_T) \cong <\D(X_T), \D(X_T, \alpha^{-1}), \cdots, \D(X_T, \alpha)>\] 
    This decomposition is functorial, hence we have an embedding $\mathscr{D}_{\X} \hookrightarrow \mathscr{D}_{Y}$.
    
    Note that each component in the semi-orthogonal decomposition is finitely generated over $X_S$, call these generators $G_i$ on $Y$. 

    By \cite[Theorem 4.2.1]{lieblich2005moduli} $\mathscr{D}_Y$ is an algebraic stack.

    Let $F$ be the universal object on $Y \times \mathscr{D}_Y$, then for any $T' \rightarrow \mathscr{D}_Y$ over $S$, $[F|_{T'}] \in \mathscr{D}_{\X}$ if and only if \[R q_* R \Hom (F|_S, G_i|_S) = 0\] for all $i$ where $q: Y \times S \rightarrow S$.
    
    This implies that $\mathscr{D}_{\X} \hookrightarrow \mathscr{D}_{Y}$ is an open immersion. So we have $\mathscr{D}_{\X}$ is algebraic.
\end{proof}

\subsection{Discussion on $\Phi_{\X}(\E)$ When $\E$ is a Simple Vector Bundle}
Following Mukai, the following analysis will be key to our classification of vector bundles. The subgroup of $X \times \Pic^0_{X}$,
$\{(a,\mathcal{L}) \in X \times X^\vee| t_a^* E \cong E \otimes \mathcal{L}\}$,
plays an important role in the classical theory. Here we consider the analogous functor in our setting.

As discussed in the introduction, since $\X \rightarrow X$ is a $\Gm$-gerbe over an abelian variety, we know that $\AAut^0_{\X} \rightarrow \Aut^0_{\X}$ is a $\Gm$-gerbe over an abelian variety by \cite[Theorem 1.1]{olsson2025twisted}. 

Let $\widetilde{\Phi_{\X}}(\E)$ be the fibered category over $k$ which to any scheme $T$ associates the groupoid whose objects are $\{\sigma \in \AAut^0(\X_T)|$ there exists $\{U_i \rightarrow T\} \text{ an etale cover s.t. }\sigma_i^* \E_{U_i} \cong \E_{U_i}\}$, and whose morphisms are natural transformations between the objects (inherited from $\AAut^0_{\X}$). $\widetilde{\Phi_{\X}}(\E)$ is a substack of $\AAut^0_{\X}$. In the next subsection we show that when $\E$ is a simple vector bundle, $\widetilde{\Phi_{\X}}(\E)$ is an algebraic stack, and we define $\Phi_{\X}(\E)$ as the coarse space of $\widetilde{\Phi_{\X}}(\E)$.
 
There is a $k$-morphism $\widetilde{\Phi_{\X}}(\E) \rightarrow \AAut^0_{\X}$. Our strategy to show the representability of $\Phi_{\X}(\E)$ is to show that it is indeed a locally closed subscheme of $\Aut^0_{\X}$.

\subsection{Proof of Representability When $\E$ is a Simple Vector Bundle}
This subsection shows that functors like $\widetilde{\Phi_{\X}}(\E)$ are a $\Gm$-gerbe over a scheme. In fact, we study the more general functor $\W$ defined below. 

\begin{definition}
    Let $\s \rightarrow S$ be a $\Gm$-gerbe over a $k$-scheme $S$. Denote $V := \X \times_k S$.
    Given $(1,0)$-twisted locally free sheaves $F$ and $G$ that are universally simple (i.e. $\End(F_T) = k$ and $\End(G_T) = k$ for any $T/k$) on $\X \times_k \s$, let $\mathscr{W}$ be the fibered category over $S$ which to any $S$-scheme $T$ to the groupoid of $t \in \s(T)$ such that $F_T \cong G_T$ etale locally on $T$.
\end{definition}

\begin{remark}
    Note that $\widetilde{\Phi_{\X}}(\E)$ is the special case of $\W$ when $S = \AAut^0_{\X_T}$, $F= \tilde{\sigma}^*\E_{\AAut^0_{\X}}$, and $G = \E_{\AAut^0_{\X}}$ with $\tilde{\sigma}$ being the universal object on $\X \times \AAut^0_{\X}$.
\end{remark}

\begin{lemma}
    Let $f \colon X  \rightarrow k$ be a proper flat universally integral morphism, let $q_S \colon \s \rightarrow S$ be a $\mathbb{G}_m$-gerbe over $S$, $q_X \colon \X \rightarrow X$ be a $\Gm$-gerbe over $X$, hence $q \colon \X \times_k \s \rightarrow X \times_k S$ is a $\Gm \times \Gm$-gerbe over $X \times_k S$. Let $F$ and $G$ be $(1,0)$-twisted locally free sheaves on $\X \times \s$. There exists a coherent $\mathcal{O}_S$-module $A$ and an isomorphism of functors on quasi-coherent $\mathcal{O}_S$-modules $M$:
    \[\tilde{g}_*(\HHom_{\mathcal{O}_{\tilde{V}}} (F, G) \otimes_{\mathcal{O}_S} M) \cong \HHom_{\mathcal{O}_S}(A, M).\] where $g := f \circ q$.
\end{lemma}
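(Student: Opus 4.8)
This is a relative and twisted incarnation of Grothendieck's theorem that the formation of direct images is corepresentable by a coherent sheaf (Grothendieck, \emph{EGA} III, \S{}7.7; see also Mumford, \emph{Abelian Varieties}, \S{}II.5). The plan is to descend the data along the gerbe $q$ to the scheme $X\times_k S$ and then invoke the classical statement. (The ``universally integral'' hypothesis on $f$ appears not to be needed for this particular lemma: properness and flatness of $f$, together with local freeness of $F$ and $G$, suffice.)

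First I would make the gerbe reduction. Since $F$ and $G$ are both $(1,0)$-twisted, the sheaf $\mathcal{H} := \HHom_{\mathcal{O}_{\tilde V}}(F,G)$ has weight $(0,0)$ for the $\Gm\times\Gm$-action, so it descends: $\mathcal{H}\cong q^{*}\mathcal{F}$ for a unique coherent sheaf $\mathcal{F} := q_{*}\mathcal{H}$ on $X\times_k S$. As $q$ is faithfully flat and $\mathcal{H}$ is locally free (being $\HHom$ of two locally free sheaves), $\mathcal{F}$ is locally free; and since $X/k$ is flat, the projection $h\colon X\times_k S\to S$ is flat, hence $\mathcal{F}$ is flat over $S$. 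Because $\Char k = 0$, the group $\Gm$ is linearly reductive, so $q$ is cohomologically affine and $q_{*}$ is exact on $\mathrm{QCoh}(\tilde V)$ with $q_{*}q^{*}\cong\mathrm{id}$ on weight-$(0,0)$ sheaves; combined with the projection formula this yields, for $\tilde g = h\circ q$,
\[
\tilde g_{*}\bigl(\mathcal{H}\otimes_{\mathcal{O}_S} M\bigr)\;=\;h_{*}q_{*}\bigl(q^{*}\mathcal{F}\otimes q^{*}h^{*}M\bigr)\;=\;h_{*}\bigl(\mathcal{F}\otimes_{\mathcal{O}_S} M\bigr),
\]
functorially in the quasi-coherent $\mathcal{O}_S$-module $M$. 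It therefore suffices to corepresent $M\mapsto h_{*}(\mathcal{F}\otimes_{\mathcal{O}_S} M)$.

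Next I would run the classical argument on $X\times_k S$. Reducing to the case $S$ noetherian by a standard limit argument (all data are of finite presentation), cohomology and base change provides a complex $K^{\bullet} = (K^{0}\xrightarrow{d^{0}}K^{1}\to\cdots\to K^{\dim X})$ of finite locally free $\mathcal{O}_S$-modules, placed in degrees $[0,\dim X]$, together with functorial isomorphisms $R^{i}h_{*}(\mathcal{F}\otimes_{\mathcal{O}_S} M)\cong H^{i}(K^{\bullet}\otimes_{\mathcal{O}_S} M)$ for every quasi-coherent $M$. In degree $0$ this reads $h_{*}(\mathcal{F}\otimes_{\mathcal{O}_S} M) = \ker\bigl(K^{0}\otimes_{\mathcal{O}_S} M \xrightarrow{d^{0}\otimes M} K^{1}\otimes_{\mathcal{O}_S} M\bigr)$. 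Now set $A := \operatorname{coker}\bigl((d^{0})^{\vee}\colon (K^{1})^{\vee}\to (K^{0})^{\vee}\bigr)$ with $(-)^{\vee} = \HHom_{\mathcal{O}_S}(-,\mathcal{O}_S)$, a finitely presented and hence coherent $\mathcal{O}_S$-module. Applying the left-exact functor $\HHom_{\mathcal{O}_S}(-,M)$ to the presentation $(K^{1})^{\vee}\to (K^{0})^{\vee}\to A\to 0$ and using the canonical identifications $\HHom_{\mathcal{O}_S}((K^{i})^{\vee},M)\cong K^{i}\otimes_{\mathcal{O}_S} M$ for the finite locally free $K^{i}$ gives an exact sequence $0\to\HHom_{\mathcal{O}_S}(A,M)\to K^{0}\otimes_{\mathcal{O}_S} M\to K^{1}\otimes_{\mathcal{O}_S} M$, whence $\HHom_{\mathcal{O}_S}(A,M)\cong\ker(d^{0}\otimes M)\cong\tilde g_{*}\bigl(\HHom_{\mathcal{O}_{\tilde V}}(F,G)\otimes_{\mathcal{O}_S} M\bigr)$, functorially in $M$.

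The only point that goes beyond the classical theorem is the gerbe reduction in the second paragraph: one must check carefully that $\HHom(F,G)$ really descends along $q$ and that $q_{*}$ is exact on, and commutes with the twist by $h^{*}M$ on, the weight-$(0,0)$ component. This is exactly where the hypothesis $\Char k = 0$ enters, via linear reductivity of $\Gm$; the cohomology-and-base-change input and the homological extraction of $A$ are routine. I expect this gerbe-theoretic bookkeeping to be the main (if modest) obstacle.
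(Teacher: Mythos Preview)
Your proposal is correct and follows the same approach as the paper: observe that $\HHom(F,G)$ is $(0,0)$-twisted and hence descends along $q$ to $X\times_k S$, then invoke EGA~III~7.7.6. One small quibble: $\Gm$ is linearly reductive in every characteristic, so the $\Char k=0$ hypothesis is not actually needed for the gerbe reduction (and indeed the paper's proof makes no use of it).
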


\begin{proof}
    For any $(1,0)$-twisted $F$ and $G$, $\HHom_{\mathcal{O}_{\tilde{V}}}(F, G)$ is 0-twisted and hence descends to the coarse space $X \times S$.

    So \[g_*\HHom_{\mathcal{O}_{\X \times \s}}(F, G) \otimes_{\mathcal{O}_S} M)  \cong f_*(\HHom_{\mathcal{O}_{\X \times \s}}(F, G)\otimes_{\mathcal{O}_S} M)\]
    Then the result follows from \cite[7.7.6]{EGAIII}.
\end{proof}

In particular, given any morphism $\alpha: T \rightarrow S$, letting $\M = \alpha_*\mathcal{O}_T$, we see that 
\[\alpha_*g_{T*}(\HHom_{\mathcal{O}_{\X \times \s \times_S T}}(F_T, G_T) \cong \HHom_{\mathcal{O}_S}(A, \alpha_*{\mathcal{O}_T}),\] 
\[Hom_{\mathcal{O}_{\X \times \s \times_S T}}(F_T, G_T) \cong Hom_{\mathcal{O}_S}(A, \alpha_*{\mathcal{O}_T})\] for some $\mathcal{O}_S$-module $A$. 

Let $Z$ be the scheme-theoretic support of $A$, and let $W$ be the set $\{s \in S| F_s \cong G_s\}$ (similarly as in \cite[Proposition 1.5]{mukai1978semi}, $W$ is a constructible set.) As topological spaces, $W \subset Z$. 

When $\G$ is $S$-simple, as in \cite[Proposition 1.7]{mukai1978semi}, we see that for every point $s \in W$, the isomorphism $F_s \cong G_s$ extends to an isomorphism in an open neighborhood, so $W$ is an open subset of $Z$, which gives $W$ a natural open subscheme structure of $Z$.

\begin{proposition}
    $\W \rightarrow W$ is a $\Gm$-gerbe.
\end{proposition}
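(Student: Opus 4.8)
The plan is to show that $\W$ is canonically equivalent, as a stack over $W$, to the restriction $\s \times_S W$ of the $\Gm$-gerbe $q_S \colon \s \to S$; since the pullback of a $\Gm$-gerbe along any morphism is again a $\Gm$-gerbe, this immediately yields the proposition. The crucial point is that $F$ and $G$, being $(1,0)$-twisted, carry \emph{no} twisting with respect to the $\Gm$ coming from $\s$, so they descend to $\X \times_k S$; consequently the condition ``$F_T \cong G_T$ étale locally on $T$'' that defines an object $t \in \W(T)$ depends only on the underlying morphism $\alpha \colon T \to S$ and not on the lift $t \in \s(T)$. Thus $\W$ is a full substack of $\s$, and it suffices to prove: for $\alpha \colon T \to S$ and $t \in \s(T)$, one has $t \in \W(T)$ if and only if $\alpha$ factors through the locally closed subscheme $W \hookrightarrow S$.

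For the ``if'' direction, I would first upgrade the pointwise statement $F_s \cong G_s$ for $s \in W$ to a relative one, exactly as in \cite[Proposition 1.6]{mukai1978semi}: using only the $S$-simplicity of $F$ and $G$, and noting that the argument transplants verbatim because $\HHom(F,G)$ is $0$-twisted and descends to the coarse space $X \times S$, one obtains a line bundle $L$ on $W$ together with an isomorphism $F_W \cong G_W \otimes \mathrm{pr}^* L$ over $\X \times_k W$, coming from the canonical evaluation map, which is a fiberwise isomorphism and hence an isomorphism by Nakayama. Pulling back along $\alpha \colon T \to W$ and trivializing $\alpha^* L$ over an étale cover of $T$ then gives $F_T \cong G_T$ étale locally on $T$, i.e. $t \in \W(T)$.

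For the ``only if'' direction, since $W \hookrightarrow S$ is a monomorphism the property ``$\alpha$ factors through $W$'' is étale-local on $T$, so I may assume $F_T \cong G_T$ globally on $T$. Choosing an isomorphism $\phi$ and feeding it into the Hom-formula of the Lemma, $\Hom(F_T, G_T) \cong \Hom_{\mathcal{O}_S}(A, \alpha_*\mathcal{O}_T)$, together with the analogous statement applied to $\phi^{-1}$ and $S$-simplicity (as in \cite[Propositions 1.6, 1.7]{mukai1978semi}), one sees that $\alpha^* A$ is invertible and that the map $\alpha^* A \to \mathcal{O}_T$ determined by $\phi$ is an isomorphism; this forces $\alpha$ to factor through the open subscheme $W$ of $Z = \operatorname{Supp}(A)$. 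I expect this step — verifying that the scheme structure on $W$ (as an open subscheme of the scheme-theoretic support of $A$) is the ``correct'' one, i.e. that it is stable under base change — to be the main obstacle, although it is routine in the sense that no genuinely new twisted phenomena occur: after descending $\HHom(F,G)$ the whole analysis takes place on $X \times S$, so Mukai's arguments apply directly. Combining the two directions gives $\W \cong \s \times_S W$ over $W$, and therefore $\W \to W$ is a $\Gm$-gerbe.
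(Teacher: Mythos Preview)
Your proposal is correct and arrives at the same conclusion via a somewhat more conceptual route than the paper. The paper argues in two steps: first it shows directly (via the annihilator of $A$) that the structure map $\W \to S$ factors through $W$, and then it verifies the gerbe axioms G1--G3 for $\W \to W$ by hand, using that $\s \to S$ is already a gerbe (for G2, G3) and the extension of fiberwise isomorphisms to neighborhoods (for G1). You instead make explicit at the outset the observation that, because $F$ and $G$ are $(1,0)$-twisted, they descend along the $\Gm$-gerbe $\X \times_k \s \to \X \times_k S$; this immediately shows that membership in $\W(T)$ depends only on the underlying $\alpha \colon T \to S$, and hence that $\W$ is a \emph{full} substack of $\s$. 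From there the identification $\W \cong \s \times_S W$ reduces to the purely scheme-theoretic statement that the condition on $\alpha$ is exactly ``$\alpha$ factors through $W$''. The paper does in fact record this descent observation, but only in the following subsection (``Since $F$ and $G$ are $(1,0)$-twisted on $\X \times_k \s$, they can be viewed as objects in $\D(V)^{(1)}$''), not in this proof.

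One small point: your ``only if'' argument, asserting that $\alpha^* A$ is invertible and that the map $\alpha^* A \to \mathcal{O}_T$ determined by $\phi$ is an isomorphism, is plausible but not fully justified as written; the compatibility between the modules $A$ and $B$ (for $\Hom(F,G)$ and $\Hom(G,F)$) under composition is not immediate. The paper's version of this step is more direct: from $F_T \cong G_T \otimes N$ one gets an injection $\alpha_* N^{-1} \hookrightarrow \HHom_{\mathcal{O}_S}(A, \alpha_* \mathcal{O}_T)$, so $\operatorname{Ann}(A)$ kills $\alpha_* N^{-1}$, and since $N$ is invertible this forces $\alpha$ to factor through $Z$; the set-theoretic image then lies in the open $W \subset Z$. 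Either argument ultimately works, but the annihilator route is shorter.
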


\begin{proof}
    We first show that the map $\W \rightarrow S$ factors through $W$, that is, to show that for any $T \rightarrow \W$ over $S$, we have $T \rightarrow W$ over $S$ (ie. $\W$ is a fibered category over $W$.)

    Given any $\alpha: T \rightarrow \W$ (we may assume $\alpha$ is affine), we see that there exists an etale cover of $\{U_i \rightarrow T\}$ such that $F_{U_i} \cong G_{U_i}$. This implies that there exists a line bundle on $T$ such that $F_T \cong G_T \otimes_{\mathcal{O}_T} N$ on $\X \times \s \times_S T$. So we have \[\HHom_{\mathcal{O}_{\X \times \s \times_S T}}(F_T, G_T) \cong \HHom_{\mathcal{O}_{\X \times \s \times_S T}}(G_T, G_T) \otimes_{\mathcal{O}_T} N^{-1}.\]
    This implies that we have an injection \[g_{T}^* N^{-1} \hookrightarrow \HHom_{\mathcal{O}_{\X \times \s \times_S T}}(F_T, G_T).\] Applying $g_{T*}$ with the fact that $g_{T*} \mathcal{O}_{\X \times \s \times_S T} \cong \mathcal{O}_T$, we get \[N^{-1} \hookrightarrow g_{T*}\HHom_{\mathcal{O}_{\X \times \s \times_S T}}(F_T, G_T).\]
    Since $\alpha$ is affine, \[\alpha_*N^{-1} \hookrightarrow \alpha_*g_{T*}\HHom_{\mathcal{O}_{\X \times \s \times_S T}}(F_T, G_T).\]

    So the annihilator of $A$ annihilates $\alpha_* N$. Since $N$ is an invertible sheaf on $T$, we see that $T \rightarrow S$ factors through $W$.

    Now we show that $\W$ is a indeed a $\Gm$-gerbe over $W$ by verifying the conditions in \cite[Definition 12.2.2]{olsson2023algebraic}. The fact that G2 and G3 hold is straightforward because $\s$ is a gerbe over $S$. To show G1, it suffices to show that there exists a cover of $W$, $\{W_i \rightarrow W\}$, such that $\W(W_i)$ is nonempty for every $i$. As we mentioned in the last proof, for every point $x \in W$, $F_x, \cong G_x$ extends to an isomorphism $F_{W_x} \cong G_{W_x}$ where $W_x$ is an open neighborhood of $x$. Ranging over all the topological points of $W$, $\{W_x\}_{x \in W}$ is the desired cover.     
\end{proof}

In particular, when $\E$ is simple, apply the proposition to $S = \Aut^0_{\X}$ and $\s = \AAut^0_{\X}$, we see that $\Phi_{\X}(\E)$ is a subgroup scheme of $\Aut^0(\X)$, and $\tilde{\Phi_{\X}}{(\E)} \rightarrow \Phi_{\X}(\E)$ is a $\Gm$-gerbe. Let $\Phi^{00}_{\X}(\E)$ be the neutral connected component of $\Phi_{\X}(\E)$.

\begin{remark}
    $\Phi_{\X}(\E)$ defined here is the twisted version of what Mukai studied in \cite[Section 3]{mukai1978semi}. Indeed, if $\X \cong \BGm_{,X}$, our definition aligns with \cite[Definition 3.5]{mukai1978semi}. The slightly unusual-looking notation $\Phi^{00}_{\X}$ is to be consistent with \cite[Definition 3.10]{mukai1978semi}. 
\end{remark}

\subsection{$\Phi(\E)$ when $\E$ is not simple}
 In \cite{mukai1978semi}, $\Phi^{00}_{\BGm_{,X}}(\E)$ is defined also for vector bundles $\E$ that are not simple. In this subsection, we discuss the situation without the assumption that $\E$ is a simple vector bundle.

 Let $\E \in \D(\X)^{(1)}$, let $S = \Aut^0_{\X}$ and $\s = \AAut^0_{\X}$, $V = \X \times_k S$, so $V$ is a $\Gm$-gerbe over $X \times_k S$ and $\X \times_k \s$ is a $\Gm$-gerbe over $V$.

 Let $F= \tilde{\sigma}^*\E_{\AAut^0_{\X}}$, $G = \E_{\AAut^0_{\X}}$ with $\tilde{\sigma}$ being the universal object on $\X \times \AAut^0_{\X}$.
 
 Since $F$ and $G$ are $(1,0)$-twisted on $\X \times_k \s$, they can be viewed as objects in $\D(V)^{(1)}$. By Proposition \ref{complexes} and \cite[\href{https://stacks.math.columbia.edu/tag/045G}{Tag 045G}]{stacks-project}, we see that the functor $\underline{Isom}$ which sends $T/S$ to the set ${Isom}_{V_T}(F_T, G_T)$ is an algebraic space locally of finite presentation over $S$. Denote its structure morphism by $h: \underline{Isom} \rightarrow S$.


 \begin{definition}
     For $\E \in \D(\X)^{(1)}$, 
     \[\Phi_{\X}^0{(\E)} = \{s \in S | F_{\X \times \{s\}} \cong G_{\X \times \{s\}} \}.\]
    We omit the subscript $\X$ when there's no ambiguity.
 \end{definition}

 \begin{remark}
     For $k$-points, we see that $\Phi^0(\E)(k)$ consists of points $\sigma \in \Aut_{\X}(k)$ such that $\sigma^* E \cong E$. From this, we see that $\Phi^0(\E)(k)$ is a subgroup of $\Aut^0_{\X}(k)$. By the next proposition, $\Phi^0(\E)$ is a closed subspace of $S$, so we can define the scheme structure on it as the reduced subscheme structure of $S$. We denote the neutral connected component of $\Phi^0(\E)$ by $\Phi^{00}(\E)$.
 \end{remark}

 \begin{proposition}
 \label{dimension}
     $\Phi^0(\E)$ is a closed subset of $S$. Equip it with the reduced structure, $\Phi^0(\E)$ is a closed subgroup of $\Aut^0_{\X}$.
 \end{proposition}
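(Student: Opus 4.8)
The plan is to deduce both the closedness and the subgroup statements from the analysis already set up for the simple case, together with the structure of $\underline{Isom}$. First I would show that $\Phi^0(\E)$ is closed in $S = \Aut^0_{\X}$. The morphism $h \colon \underline{Isom} \to S$ is an algebraic space locally of finite presentation, and its image is exactly the set of $s$ for which $F_{\X \times \{s\}} \cong G_{\X \times \{s\}}$ étale-locally, i.e.\ the set-theoretic image is $\Phi^0(\E)$ (an isomorphism over a geometric point $\bar s$ spreads out, and conversely a point of $\Phi^0(\E)$ lifts to $\underline{Isom}$ after an étale base change since $\Gm$-gerbes over fields are trivial). So it suffices to argue that this image is closed. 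Since $S$ is proper (it is an abelian variety), the key is to show $h$ is quasi-compact and that the image is stable under specialization; properness of $F,G$ as relatively perfect complexes over $S$ and the valuative criterion let one extend isomorphisms over a DVR, exactly as in Mukai's argument for constructible sets in \cite[Proposition 1.5]{mukai1978semi}, but now upgraded by the relative-perfectness/semicontinuity input from Proposition \ref{complexes}. Concretely, over a DVR with closed point in the image, one uses that $\Ext^{<0}$ vanishes to get that $\underline{Hom}(F,G)$ and $\underline{Hom}(G,F)$ commute with base change near such points, so an isomorphism over the generic point extends; this shows the image is closed under specialization, and combined with constructibility gives closedness.

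Next, having equipped $\Phi^0(\E)$ with the reduced induced structure, I would verify it is a subgroup scheme of $\Aut^0_{\X}$. On $k$-points (indeed on $\bar k$-points, and more generally on $T$-points after an étale cover trivializing the ambient gerbe) the condition $\sigma^* \E \cong \E$ is manifestly closed under composition and inverse: if $\sigma^*\E \cong \E$ and $\tau^*\E \cong \E$ then $(\sigma\tau)^*\E \cong \tau^*\sigma^*\E \cong \tau^*\E \cong \E$, and $(\sigma^{-1})^*\E \cong \E$ by pulling the first isomorphism back along $\sigma^{-1}$. The neutral element obviously lies in $\Phi^0(\E)$. Since $\Phi^0(\E)$ is reduced and we are in characteristic $0$, to check it is a subgroup scheme it is enough to check this on $\bar k$-points (a reduced closed subscheme of a group scheme over a field of characteristic $0$ that is stable under multiplication and inverse on geometric points is a closed subgroup scheme), which the above does. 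Then automatically $\Phi^0(\E) \subset \Aut^0_{\X}$ is a closed subgroup, and in particular is itself a (possibly disconnected) abelian variety, so its neutral component $\Phi^{00}(\E)$ is a sub-abelian variety.

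The main obstacle I anticipate is the closedness, specifically controlling the image of $h \colon \underline{Isom} \to S$: one must rule out the image being merely locally closed by a specialization/valuative-criterion argument, and this genuinely uses the hypothesis $\Ext^i(\E_s,\E_s)=0$ for $i<0$ (built into the definition of $\mathscr{D}_{\X}$ via Proposition \ref{complexes}) to ensure that $h_*$ of the structure sheaf — equivalently the sheaf $\underline{Hom}(F,G)$ on $S$ — behaves well under base change near points of $\Phi^0(\E)$, so that an isomorphism over a dense open (or over the generic point of a trace) does extend. The bookkeeping is essentially Mukai's, \cite[Propositions 1.5, 1.7]{mukai1978semi}, transported to the gerby setting via the descent of $\HHom(F,G)$ to the coarse space established in the lemma above; once that is in place the subgroup structure is formal.
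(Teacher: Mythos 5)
The subgroup-scheme half of your argument is fine and matches the paper, but the closedness step has a genuine gap. You propose to show that the image of $h\colon \underline{Isom}\to S$ is stable under specialization by extending an isomorphism over the generic point of a DVR trace to the closed point, using vanishing of negative Exts and base change for $\HHom(F,G)$. That mechanism does not work: the locus $\{s : F_s\cong G_s\}$ is in general only constructible, not stable under specialization. Even when $\underline{Hom}(F,G)$ commutes with base change, an isomorphism $\phi_\eta$ over the generic point can be cleared of denominators to a map $\phi$ over the DVR whose restriction to the closed fibre is nonzero, but that restriction need not be an isomorphism --- this is the usual jumping phenomenon. For instance, the family of extensions of $\mathcal{O}$ by $\mathcal{O}$ on an elliptic curve parametrized by $\Ext^1(\mathcal{O},\mathcal{O})\cong\mathbb{A}^1$ is isomorphic to the nonsplit Atiyah bundle away from $0$ and splits at $0$, so the locus of isomorphism with the Atiyah bundle is open and not closed. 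Mukai's Proposition 1.5, which you invoke, only yields constructibility; openness in the support of $A$ requires simplicity (his Proposition 1.7), and closedness is simply false for a general pair of families.

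What actually forces closedness here is the group structure, which your proposal only brings in afterwards. The paper's route is: $\Phi^0(\E)$ is constructible (the image of an \'etale cover of $\underline{Isom}$ in $S$), and its $k$-points form a subgroup of $S(k)$; a constructible subgroup of an algebraic group over $k=\bar k$ is automatically closed, because it contains a dense open subset $U$ of its closure $\overline{\Phi^0(\E)}$, and then $U\cdot U$ already exhausts $\overline{\Phi^0(\E)}(k)$ (for any $g\in\overline{\Phi^0(\E)}(k)$ the open sets $gU^{-1}$ and $U$ must meet, so $g=uv$). You should replace your specialization argument with this Chevalley-type argument. Once that is in place, your remaining steps --- reduced structure plus stability under multiplication and inverse on geometric points, in characteristic $0$, gives a closed subgroup scheme --- go through as written and agree with the paper.
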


 \begin{proof}
     $\Phi^0(\E)$ is the image of a map from the etale cover of \underline{Isom} to $S$, which is constructible.

     We also know that $\Phi^0(\E)(k)$ is closed under multiplication, so $\Phi^0(\E)(k)$ is a subgroup of $S(k)$. Let $\overline{\Phi^0_{\X}(\E)}$ be the closure with the reduced structure. Since $k = \bar{k}$, $k$-points of the constructible set $\Phi^0(\E)$ is dense, so the closure $\overline{\Phi^0(\E) \times \Phi^0(\E)} = \overline{\Phi^0(\E)(k) \times \Phi^0(\E)(k)}$ in $S$. This implies that $\overline{\Phi^0(\E)}$ is closed under multiplication, that is, $m_{\overline{\Phi^0(\E)} \times \overline{\Phi^0(\E)}}$ factors through $\overline{\Phi^0(\E)} \hookrightarrow S$ (since $\overline{\Phi^0(\E)} \times \overline{\Phi^0(\E)}$ is reduced.) So $\overline{\Phi^0(\E)}$ is a closed subgroup scheme of $S$. We see that $\Phi^0(\E)$ contains a dense open set $U$ in $\overline{\Phi^0(\E)}$, so the translations of this open dense subset covers the $k$-points of $\overline{\Phi^0(\E)}$. 
     
     We claim that $UU(k): = m(U,U)(k) = \overline{\Phi^0(\E)}(k)$. Given any $g \in \overline{\Phi^0(\E)}(k)$, $gU^{-1}$ is open, so $gU^{-1} \cap U \neq \varnothing$. This implies that there exists $u, v \in U(k)$ such that $gu^{-1} = v$, so $g = uv$.

    Together with the fact that $\Phi^0(\E)(k)$ is closed under multiplication, this shows $\Phi(\E)(k) = \overline{\Phi^0(\E)}(k)$. $\Phi^0(\E)$ is constructible, so $\Phi^0(\E)$ contains a dense open subset containing all the $k$-points, and since $k = \bar{k}$, we conclude that $\Phi^0(\E) = \overline{\Phi^0(\E)}$.
 \end{proof}

 \begin{remark}
    When $\E$ is a simple vector bundle under the condition that $\Char{k} = 0$, $\Phi^{00}{(\E)}$ agrees with the definition $\Phi_{\X}^{00}(\E)$ in the last subsection.

    When $\E$ is not a simple vector bundle, it is not necessarily true that $\Phi^0_{\E}$ with the reduced subscheme (of $\Aut^0_{\X}$) structure represents the functor we discussed at the beginning of the section. The notion is still a useful notion, as they agree set-theoretically, which leads to the following proposition.
 \end{remark}

\begin{proposition}
    A $1$-twisted vector bundle $\E$ on $\X$ is semi-homogeneous if and only if $\dim(\Phi_{\X}^{0}(\E)) = d$.
\end{proposition}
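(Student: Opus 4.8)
The plan is to prove both directions by relating $\dim \Phi^0_{\X}(\E)$ to the analogous invariant on the trivial gerbe via the isogeny descent from Theorem \ref{linebundle}, after first establishing the easy direction directly. For the forward direction, suppose $\E$ is semi-homogeneous. By Theorem \ref{summand}, $\E \cong \bigoplus_i U_{\X,\E_i}$ with each $U_{\X,\E_i}$ built out of a single simple semi-homogeneous $\E_i$, all sharing the same $\delta$ in $NS(X)\otimes\mathbb{Q}$. For a $k$-point $\sigma \in \Aut^0_{\X}(k)$, the isomorphism $\sigma^*\E \cong \E\otimes\LL$ with $\LL$ a $0$-twisted line bundle shows that $\sigma$ shifts each simple constituent $\E_i$ to $\E_i\otimes\LL$; I would first reduce to the simple case, where $\Phi^0(\E_i) = \Phi_{\X}(\E_i)$ is a genuine subgroup scheme by the results of the previous subsection, and show that semi-homogeneity forces the corresponding map $\Phi_{\X}(\E_i) \to X^{\vee}$, $\sigma \mapsto \LL_\sigma$, to have image (up to isogeny) all of the relevant "allowed" line bundles, pinning down $\dim \Phi_{\X}(\E_i) = d$. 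Concretely, pulling back along an isogeny $f\colon X'\to X$ trivializing the gerbe (Theorem \ref{linebundle}), $\tilde f^*\E_i$ becomes (a twist of) a semi-homogeneous bundle on $\BGm_{X'}\cong$ the trivial gerbe, where Mukai's computation \cite[Section 3]{mukai1978semi} gives $\dim\Phi^{00} = d$; since $f$ is an isogeny, $\dim$ is preserved under pullback/pushforward, so $\dim\Phi^0_{\X}(\E_i) = d$, and the direct-sum structure gives $\dim\Phi^0_{\X}(\E) = d$ as well.

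For the converse, suppose $\dim \Phi^0_{\X}(\E) = d = \dim \Aut^0_{\X}$. Since $\Phi^0_{\X}(\E)$ is a closed subgroup of the abelian variety $\Aut^0_{\X}$ by Proposition \ref{dimension} and has full dimension $d$, its neutral component $\Phi^{00}_{\X}(\E)$ is all of $\Aut^0_{\X}$. Thus every $\sigma$ in a connected dense subgroup satisfies $\sigma^*\E \cong \E$ literally (not just up to twist). I would then argue that this is already enough: given \emph{any} $\sigma \in \Aut^0_{\X}(k)$, I need to produce a $0$-twisted line bundle $\LL$ with $\sigma^*\E\cong\E\otimes\LL$. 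The point is that the condition "$\sigma^*\E \cong \E\otimes\LL$ for some $\LL$" cuts out exactly $\widetilde{\Phi_{\X}}(\E)$-translates interacting with the map to $X^{\vee}$; more robustly, I would pass to the trivializing isogeny $f\colon X'\to X$ of Theorem \ref{linebundle} (applied to a simple constituent, then extended), reducing to the trivial-gerbe statement which is Mukai's characterization \cite[Theorem 5.8(1)]{mukai1978semi}: a vector bundle on an abelian variety with $\dim\Phi^0 = d$ is semi-homogeneous. Descending this equivalence back down along $f$ (using that $f^*$ and $f_*$ preserve semi-homogeneity, already proved in the earlier subsection on basic properties) gives that $\E$ itself is semi-homogeneous.

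The main obstacle I anticipate is the bookkeeping in translating between $\Phi^0_{\X}(\E)$ (points fixing $\E$ on the nose) and the semi-homogeneity condition (points fixing $\E$ up to a $0$-twisted line-bundle twist): these differ by the "kernel" data measured by the map to $X^{\vee}$, and one must check that full dimensionality of $\Phi^0$ propagates correctly through the exact sequence relating these groups, rather than, say, being absorbed into the $X^{\vee}$-direction. A clean way to handle this is to not work with $\Phi^0$ directly in the converse but instead introduce (or invoke, if Mukai's argument is cited wholesale) the larger group $\{(\sigma,\LL) : \sigma^*\E\cong\E\otimes\LL\}\subset \Aut^0_{\X}\times X^{\vee}$ and its projection; semi-homogeneity is surjectivity of the first projection, and $\Phi^0(\E)$ is the fiber over $\LL = \mathcal{O}$, so a dimension count ($\dim$ of the total group $\geq d$ from the $\Phi^0$ fiber, $\leq d$ always, plus properness) forces surjectivity. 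The isogeny reduction of Theorem \ref{linebundle} is what makes the dimension bound $\dim \leq d$ and Mukai's structural input available in the twisted setting, so the argument is essentially: reduce to simple, trivialize the gerbe, quote Mukai, descend.
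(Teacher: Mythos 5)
Your proposal has two genuine problems. The first is circularity within the paper's logical structure: this proposition sits at the end of Section 2 and is an input to essentially everything you invoke. The final lemma in the proof of Theorem \ref{linebundle} needs $h\colon \Phi^{00}(\E)\rightarrow X$ to be an isomorphism once its kernel vanishes, i.e.\ it already uses the forward direction of this proposition ($\E$ semi-homogeneous $\Rightarrow$ $\Phi^{00}(\E)\rightarrow X$ surjective); Theorem \ref{summand} sits even further downstream, since it relies on Corollary \ref{relation}, hence on Proposition \ref{homogeneous}, hence on this proposition again. So neither result may be quoted here, and the "reduce to simple, trivialize the gerbe, quote Mukai, descend" route is not available at this point in the paper.

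The second problem is a wrong dimension count in your converse. The group $\Aut^0_{\X}$ is an extension of $X$ by $X^{\vee}=\Pic^0_X$ and has dimension $2d$, not $d$; so $\dim\Phi^0(\E)=d$ does not force $\Phi^{00}(\E)=\Aut^0_{\X}$, and you cannot conclude that $\sigma^*\E\cong\E$ for all $\sigma$ in a dense subgroup. The correct (and much shorter) argument is the one you only gesture at in your closing lines, carried out entirely inside $\Aut^0_{\X}$ with no isogeny or appeal to Mukai: the kernel of $\Phi^{00}(\E)\rightarrow X$ consists of line bundles $\LL\in\Pic^0_X$ with $\E\otimes\LL\cong\E$, which are $\rk(\E)$-torsion (take determinants) and hence form a finite group scheme, so $\dim\Phi^0(\E)=d$ if and only if $\Phi^{00}(\E)\rightarrow X$ is surjective. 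On the other hand, since any two lifts to $\Aut^0_{\X}(k)$ of a translation $t_a$ differ by tensoring with an element of $\Pic^0_X(k)$, semi-homogeneity of $\E$ is precisely surjectivity of $\Phi^0(\E)(k)\rightarrow X(k)$, which by properness of source and target is equivalent to scheme-theoretic surjectivity. That two-step comparison is the whole proof.
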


\begin{proof}
    We see that $\ker(\Phi^{00}(\E) \rightarrow X)$ is contained in the group of $\rk{(\mathcal{H}^i(\E))}$-torsion line bundles on $X$, which is a finite group scheme, so $\Phi^{00}(\E) \rightarrow X$ is surjective if and only if $\dim \Phi^0(\E) =\dim \Phi^{00}(\E) = d$.

    We also see that $\E$ is semi-homoegenous if and only if $\Phi^0(\E) \rightarrow X$ is surjective on $k$-points. Since both $\Phi^0(\E)$ and $X$ are proper, surjectivity on $k$-points implies surjectivity. Since $\Phi^0(\E)$ is a group scheme, this is equivalent to the surjectivity of $\Phi^{00}(\E) \rightarrow X$.

    Therefore, $\E$ on $\X$ is semi-homogeneous if and only if $\dim(\Phi_{\X}^{0}(\E)) = d$.
\end{proof}

 
     


\section{Semi-homogeneous Vector Bundles}
In this section we assume $k = \bar{k}$, $\Char(k) = 0$. So by choosing a $k$-point of $X$, $X \cong A$ as abelian varieties. Many of the tools used in this section come from \cite{mukai1978semi}.

\subsection{Behavior under Isogenies}
\begin{proposition}[direct summand of semi-homogeneous is semi-homogeneous]
    If $\E \cong \E_1 \oplus \cdots \oplus \E_n$ is semi-homogeneous, then $\E_i$ is semi-homogeneous for all $i$. 
\end{proposition}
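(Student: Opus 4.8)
The plan is to reduce everything to the criterion proved just above: a $1$-twisted vector bundle $\mathcal{G}$ on $\X$ is semi-homogeneous if and only if $\dim\Phi^{0}_{\X}(\mathcal{G})=d$. Recall moreover, from the proofs of that criterion and of Proposition~\ref{dimension}, that the map $\Phi^{0}_{\X}(\mathcal{G})\to X$ induced by $\Aut^{0}_{\X}\to\Aut^{0}_{X}=X$ always has finite kernel, so $\dim\Phi^{0}_{\X}(\mathcal{G})\le d$ with no hypothesis. Hence it is enough to exhibit, for each $i$, a closed subgroup of $\Aut^{0}_{\X}$ of dimension $d$ contained in $\Phi^{0}_{\X}(\E_i)$, and the obvious candidate is $\Phi^{00}_{\X}(\E)$, which has dimension $d$ precisely because $\E$ is semi-homogeneous. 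So the whole statement collapses to the inclusion $\Phi^{00}_{\X}(\E)\subseteq\Phi^{0}_{\X}(\E_i)$. Since $k=\bar k$, the scheme $\Phi^{00}_{\X}(\E)$ is reduced and irreducible with a dense set of $k$-points, while $\Phi^{0}_{\X}(\E_i)$ is closed in $\Aut^{0}_{\X}$ by Proposition~\ref{dimension}; so it suffices to check $\sigma^{*}\E_i\cong\E_i$ for every $\sigma\in\Phi^{00}_{\X}(\E)(k)$.

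As $\End_{\X}(\E)$ is a finite-dimensional $k$-algebra, the Krull--Schmidt theorem applies. Write $\E\cong\bigoplus_{j=1}^{m}G_j^{\oplus b_j}$ with $G_1,\dots,G_m$ pairwise non-isomorphic indecomposable $1$-twisted vector bundles, and fix idempotents $p_j\in\End_{\X}(\E)$ with $\im(p_j)\cong G_j$. Each $\E_i$ is a direct sum of copies of various $G_j$, so it is enough to prove $\sigma^{*}G_j\cong G_j$ for all $j$ and all $\sigma\in\Phi^{00}_{\X}(\E)(k)$. For such $\sigma$ we have $\sigma^{*}\E\cong\E$, hence $\sigma^{*}G_j$ is indecomposable and isomorphic to one of the $G_{j'}$; this gives a function $\Phi^{00}_{\X}(\E)(k)\to\{[G_1],\dots,[G_m]\}$, $\sigma\mapsto[\sigma^{*}G_j]$. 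I claim each fibre of this function is Zariski-open in $\Phi^{00}_{\X}(\E)$; since $\Phi^{00}_{\X}(\E)$ is connected, the function is then constant, equal to its value $[G_j]$ at $\sigma=\mathrm{id}$, which is what we need.

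For the claim I would work over $B$, the preimage of $\Phi^{00}_{\X}(\E)$ in $\AAut^{0}_{\X}$ (a $\Gm$-gerbe over $\Phi^{00}_{\X}(\E)$, hence connected), over which the universal automorphism $\tilde\sigma$ furnishes $\tilde\sigma^{*}\E$ on $\X_B$. By Proposition~\ref{complexes} and \cite[\href{https://stacks.math.columbia.edu/tag/045G}{Tag 045G}]{stacks-project} the functor $\underline{\mathrm{Isom}}(\tilde\sigma^{*}\E,\E)$ is an algebraic space; it is a torsor under the smooth \emph{connected} group $\underline{\End_{\X}(\E)}^{\times}$ (connected since $k=\bar k$, so $\End_{\X}(\E)^{\times}$ is an extension of a product of general linear groups by the unipotent group $1+\mathrm{rad}\,\End_{\X}(\E)$), and it is nonempty over every point of $\Phi^{0}_{\X}(\E)$ by definition; hence it is smooth, surjective, and has connected fibres over $B$. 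Cover $B$ by étale charts trivializing it; over each chart, conjugating $\tilde\sigma^{*}p_j$ by the tautological isomorphism $\tilde\sigma^{*}\E\cong\E$ produces a morphism to the affine finite-type scheme $\mathrm{Idem}$ of idempotents of $\End_{\X}(\E)$, whose value at a point over $\sigma$ is an idempotent with image $\cong\sigma^{*}G_j$. Now the conjugation action of $\End_{\X}(\E)^{\times}$ on $\mathrm{Idem}$ has Zariski-open orbits: for idempotents $e,f$ one has $eu=uf$ with $u:=ef+(1-e)(1-f)$, so $f$ is conjugate to $e$ on the open locus where $u$ is invertible, which contains $e$. Thus $\mathrm{Idem}$ is a finite disjoint union of clopen orbits, one per isomorphism class of direct summand of $\E$; composing with the projection onto this finite set gives a locally constant function on each chart, and (since $B\to\Phi^{00}_{\X}(\E)$ is open and surjective) the fibres of $\sigma\mapsto[\sigma^{*}G_j]$ are open in $\Phi^{00}_{\X}(\E)$, as claimed.

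The delicate point is this last step: upgrading the slogan ``a connected group permutes the finitely many indecomposable summands of $\E$'' to an honest morphism to a finite scheme, for which the device is the scheme of idempotents of the \emph{fixed} algebra $\End_{\X}(\E)$ together with the openness of its conjugacy orbits. Everything else is formal given the criterion of the previous section, Proposition~\ref{complexes}, and Proposition~\ref{dimension}. One could alternatively try to read the statement off Theorem~\ref{summand} --- a semi-homogeneous bundle and all of its direct summands have the same invariant $\delta$ --- but I would avoid that, since it would be circular if the proof of Theorem~\ref{summand} relies on the present Proposition.
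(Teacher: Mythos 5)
Your strategy converges on exactly the same key inclusion as the paper's proof, namely $\Phi^{00}_{\X}(\E)\subseteq\Phi^{0}_{\X}(\E_i)$, from which semi-homogeneity of $\E_i$ follows by the dimension criterion; but the two arguments exploit connectedness very differently, and the paper's is much lighter. The paper observes that every $\sigma^*\E_i$ with $\sigma\in\Phi^{00}(\E)(k)$ is a direct summand of $\sigma^*\E\cong\E$, so by Krull--Schmidt the orbit of $[\E_i]$ under $\Phi^{00}(\E)(k)$ is finite; hence the stabilizer has finite index, and a proper closed subgroup of a connected group over $\bar k$ has infinite index, which gives the inclusion at once. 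You instead manufacture a locally constant function $\sigma\mapsto[\sigma^*G_j]$ via the scheme of idempotents of $\End(\E)$ and the openness of conjugacy orbits. This is a workable device, and your instinct to avoid Theorem \ref{summand} on grounds of circularity is correct (its proof runs through Proposition \ref{pullback}, which uses the present statement); but it is considerably heavier than necessary, since Krull--Schmidt already hands you finiteness of the orbit for free.

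There is one genuine soft spot in your route: the assertion that $\underline{\mathrm{Isom}}(\tilde\sigma^*\E,\E)\to B$ is smooth with étale-local sections ``because it is a torsor under a smooth group and nonempty over every point.'' A priori it is only a pseudo-torsor, and fiberwise nonemptiness of a pseudo-torsor under a smooth group does \emph{not} imply local triviality --- one needs flatness of the total space over the base, i.e.\ precisely the extension-of-isomorphisms-from-a-point-to-a-neighborhood property that the paper establishes only for simple bundles (via Mukai's Proposition 1.7) and explicitly cautions about in the non-simple case. The claim is repairable here: over every point of $B$ one has $\Hom(\sigma^*\E,\E)\cong\End(\E)$, so the coherent sheaf representing the Hom-functor has constant fiber rank over the reduced scheme $B$ and is therefore locally free; the Hom-space is then smooth over $B$, $\mathrm{Isom}$ is open in it, and the sections you need exist. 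But as written the inference is invalid, and without it the locally-constant-function argument does not start. With that step patched your proof is correct; otherwise the paper's finite-orbit argument is the safer (and shorter) path.
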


\begin{proof}
    Notice first that $\Phi^{00}(\E) \cap \Phi^{00}(\E_i) \neq \emptyset$ for all $i$ since they all contain the identity element in $\Aut^0_{\X}(k)$.
    
    If $\Phi^{00}(\E) \not \subset \Phi^{00}(\E_i)$, then there exists an infinite sequence $\sigma_1, \sigma_2, \cdots$ of elements in $\Phi^{00}(\E)(k)$ such that they are in different cosets of $\Phi^{00}(\E_i)(k)$ in $\Phi^{00}(\E)(k)$. 
    
    We see that given any $\sigma_s$ and $\sigma_t$ in this sequence, $\sigma_s^*{\E_i} \not \cong \sigma_s^*{\E_i}$, as otherwise $(\sigma_s-\sigma_t)^*{\E_i} \cong \E_i$, which would imply that $\sigma_s$ and $\sigma_t$ are in the same coset of $\Phi^{00}(\E_i)$. This means that we have infinitely many non isomorphic direct summands of $\E$, namely $\sigma_1^*(E_i), \sigma_2^*(E_i), \cdots$, which contradicts Krull-Schmidt theorem \cite[Theorem 1]{atiyah1956krull}.
\end{proof}

\begin{remark}
    The Krull-Schmidt theorem applies in our setting because $\Coh(\X)^{(1)}$ satisfies the conditions in \cite[Corollary of Lemma 3]{atiyah1956krull}. And the exact proof of \cite[Lemma 9]{atiyah1956krull} shows that any direct factor of a locally free $1$-twisted sheaf on $\X$ is also locally free.
\end{remark}

Let $\pi: Y \rightarrow X$ be an isogeny, let $\Y = \X \times _X Y$, and let $\tilde{\pi} : \Y \rightarrow \X$ be the base change of $\pi$ under $p: \X \rightarrow X$. For simplicity, we sometimes abuse the notation and use $\pi$ for $\tilde{\pi}$.

\begin{proposition}
\label{pullback}
     Given 1-twisted vector bundles $\E$ on $\X$ and $\F$ on $Y$, $\tilde{\pi}^* {\E}$ is semi-homogeneous if and only if $\E$ is semi-homogeneous, and $\tilde{\pi}_* {\F}$ on $\Y$ is semi-homogeneous if and only if $\F$ is semi-homogeneous.
\end{proposition}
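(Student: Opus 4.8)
Here is my proposed plan of attack.

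The plan is to prove the two ``forward'' implications --- that $\tilde\pi^*$ preserves semi-homogeneity and that $\tilde\pi_*$ preserves semi-homogeneity --- and then deduce the two converses formally. Write $K=\ker(\pi)$; since $\Char k = 0$, $K$ is a finite étale group scheme and both $\pi\colon Y\to X$ and its base change $\tilde\pi\colon\Y\to\X$ are $K$-torsors. Two elementary facts will drive the converses. First, $\tilde\pi$ is finite étale of degree $[K]$, so the unit $\mathcal{O}_{\X}\to\tilde\pi_*\mathcal{O}_{\Y}$ is split by $\tfrac{1}{[K]}\mathrm{tr}$; tensoring with $\E$ and using the projection formula exhibits $\E$ as a direct summand of $\tilde\pi_*\tilde\pi^*\E$. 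Second, since $\tilde\pi$ is a $K$-torsor one has $\tilde\pi^*\tilde\pi_*\F\cong\bigoplus_{c\in K}\tau_c^*\F$, where $\tau_c$ is the automorphism of $\Y$ given by the $K$-action, so $\F$ (the $c=0$ term) is a direct summand of $\tilde\pi^*\tilde\pi_*\F$. Granting the forward implications: if $\tilde\pi^*\E$ is semi-homogeneous then so is $\tilde\pi_*\tilde\pi^*\E$, hence so is its summand $\E$ by the Proposition that direct summands of semi-homogeneous bundles are semi-homogeneous; symmetrically, if $\tilde\pi_*\F$ is semi-homogeneous then so is $\tilde\pi^*\tilde\pi_*\F$, hence so is $\F$.

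The only geometric input needed for the forward implications is a \emph{lifting} operation: given $\sigma\in\AAut^0_{\X}$ lying over a translation $t_a$ of $X$ and any $b\in\pi^{-1}(a)$, the pair $(\sigma,t_b)$ defines an automorphism $\tau:=\sigma\times_X t_b$ of $\Y=\X\times_X Y$ lying over $t_b$, inducing the identity on the inertia $\Gm$ and lying in the neutral component, and satisfying $\tilde\pi\circ\tau=\sigma\circ\tilde\pi$ (this uses $\pi\circ t_b=t_a\circ\pi$). For the pullback direction, suppose $\E$ is semi-homogeneous; by the criterion established earlier ($\E$ is semi-homogeneous iff $\Phi^0_{\X}(\E)\to X$ is surjective on $k$-points, together with its evident analogue for $\Y$), for each $b\in Y(k)$ there is $\sigma\in\Phi^0_{\X}(\E)(k)$ over $a=\pi(b)$, i.e. $\sigma^*\E\cong\E$. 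Lifting $\sigma$ to $\tau=\sigma\times_X t_b$ gives $\tau^*\tilde\pi^*\E=\tilde\pi^*\sigma^*\E\cong\tilde\pi^*\E$, so $\tau\in\Phi^0_{\Y}(\tilde\pi^*\E)(k)$ lies over $b$; as $b$ was arbitrary, $\Phi^0_{\Y}(\tilde\pi^*\E)\to Y$ is surjective on $k$-points, hence $\tilde\pi^*\E$ is semi-homogeneous.

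For the pushforward direction, suppose $\F$ is semi-homogeneous. From $\tilde\pi\circ\tau=\sigma\circ\tilde\pi$ with $\tau,\sigma$ isomorphisms one gets $\sigma^*\tilde\pi_*\F\cong\tilde\pi_*\tau^*\F$. Given an arbitrary $\sigma\in\Aut^0_{\X}(k)$ over $t_a$, pick $b\in\pi^{-1}(a)$ and lift to $\tau=\sigma\times_X t_b$. Since $\F$ is semi-homogeneous, $\tau^*\F\cong\F\otimes\mathcal{M}$ for some $0$-twisted line bundle $\mathcal{M}$ on $\Y$; because $\tau$ lies over a translation, $\mathcal{M}$ in fact lies in $\Pic^0(Y)$ --- indeed $\mathcal{M}^{\otimes\rk\F}\cong\det(\tau^*\F)\otimes\det(\F)^{-1}$ descends to $t_b^*\det\F\otimes(\det\F)^{-1}\in\Pic^0(Y)$, and $NS(Y)$ is torsion-free. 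As $\pi^*\colon\Pic^0(X)\to\Pic^0(Y)$ is the dual isogeny, hence surjective, we may write $\mathcal{M}\cong\tilde\pi^*\mathcal{M}_0$ with $\mathcal{M}_0\in\Pic^0(X)$; then by the projection formula $\sigma^*\tilde\pi_*\F\cong\tilde\pi_*(\F\otimes\tilde\pi^*\mathcal{M}_0)\cong\tilde\pi_*\F\otimes\mathcal{M}_0$. Since $\sigma$ was arbitrary, $\tilde\pi_*\F$ is semi-homogeneous.

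The main obstacle is the lifting operation of the second paragraph: one must verify at the level of the automorphism gerbes $\AAut^0_{\X}$, $\AAut^0_{\Y}$ (not merely of the coarse abelian varieties) that $\sigma\times_X t_b$ is a genuine isomorphism of $\Gm$-gerbes, lies in the neutral component, and that $\tilde\pi\circ\tau=\sigma\circ\tilde\pi$ holds as $2$-morphisms compatibly in families --- the fact that $\tilde\pi$ is the base change of $\pi$ together with the structure results of \cite{olsson2025twisted} should supply this, but it is precisely where the twisted setting demands care beyond Mukai's argument. A secondary point to pin down is the claim that the twisting line bundle produced by semi-homogeneity of $\F$ over a translation lies in $\Pic^0(Y)$; this is exactly what lets it descend along $\pi^*$ and makes the pushforward direction go through with no auxiliary hypothesis.
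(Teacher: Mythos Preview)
Your proposal is correct and follows essentially the same approach as the paper: the forward implications go via the lifting $\tau=\sigma\times_X t_b$ (which the paper denotes $\pi^*(\gamma)$), the projection formula, and surjectivity of the dual isogeny $\pi^\vee$, and the converses are deduced from the direct-summand proposition applied to $\tilde\pi_*\tilde\pi^*\E$ and $\tilde\pi^*\tilde\pi_*\F$ (the paper writes these explicitly as $\bigoplus_{\LL\in\ker\pi^\vee}\E\otimes\LL$ and $\bigoplus_{a\in\ker\pi}t_a^*\F\otimes\delta_a$ rather than using the trace splitting, but the conclusion is the same). The only variation is that for the pullback forward direction you invoke the $\Phi^0_{\Y}(\tilde\pi^*\E)\to Y$ surjectivity criterion whereas the paper verifies the definition directly for an arbitrary $\sigma\in\Aut^0_{\Y}(k)$; both arguments are valid, and the two technical points you flag --- the lifting at the level of gerbe automorphisms and the fact that the twisting line bundle lies in $\Pic^0$ --- are exactly the ones the paper handles (the former via the base-change construction and \cite[8.11]{olsson2025twisted}, the latter tacitly via the extension structure $0\to\Pic^0_X\to\Aut^0_{\X}\to X\to 0$).
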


\begin{proof}
    The first step is to show that the pushforward and pullback of a semi-homogeneous vector bundle is semi-homogeneous, that is, if $\E$ (resp. $\F$) is semi-homogeneous, 
    then $\tilde{\pi}^* \E$ (resp. $\tilde{\pi}_* {\F}$) is semi-homogeneous. Given any $\sigma \in \Aut^0_{\Y}(k)$ with image $t_a \in \Aut^0_Y(k)$, let $\gamma$ be a lift of $t_{\pi(a)} \in \Aut^0_{\X}(k)$. We see that $\sigma$ and $\pi^*{(\gamma)}$ are both mapped to $t_a \in \Aut^0_{\Y}(k)$, so by \cite[Theorem 1.1]{olsson2025twisted}, given any sheaf $F$, there exists some line bundle $\LL \in \Pic^0_Y(k)$ such that \[\sigma^*F \cong (\pi^*(\gamma))^* F \otimes \LL.\]\\
    So we have \[\sigma^*(\tilde{\pi}^* \E) \cong (\tilde{\pi}^*(\gamma))^*{\tilde{\pi}^*} \E \otimes \LL \cong \tilde{\pi}^* \gamma^* \E \otimes \LL \cong \tilde{\pi}^*(\E \otimes \N) \otimes \LL \cong \tilde{\pi}^*{\E} \otimes (\tilde{\pi}^* \N \otimes \LL)\] for some $\N \in \Pic^0_X$. This shows that $\tilde{\pi}^* \E$ is semi-homogeneous.

    When $\F$ is semi-homogeneous, given any $ \sigma \in \Aut^0_{\Y}(k)$ we know that there exist some $\LL \in \Pic^0_Y(k)$ such that $\sigma^*\F \cong \F \otimes \LL$, since $\pi^\vee \colon \Pic^0_X(k) \rightarrow \Pic^0_Y(k)$ is an isogeny, we know that there exists some 0-twisted line bundle $\M$ on $\X$ such that $\sigma^*\F \cong \F \otimes \tilde{\pi}^* \M$. \\
    Given any $\gamma \in \Aut^0_{\X}(k)$, we can find a $\sigma \in \Aut^0_{\Y}(k)$ such that \[\gamma^* \tilde{\pi}_* \F \cong \tilde{\pi}_* \sigma^* \F = \tilde{\pi}_*(\F \otimes \tilde{\pi}^* \M) \cong \tilde{\pi}_* \F \otimes \M\] This shows that $\tilde{\pi}_* \F$ is semi-homogeneous.

     Now we assume $\tilde{\pi}^* \E$ is semi-homogeneous. To show that $\E$ is semi-homogeneous, we see that $\tilde{\pi}_* \tilde{\pi}^* \E \cong \oplus_{\LL \in \ker(\pi^\vee)}\E \otimes \LL$ is semi-homogeneous, so as a summand, $\E$ is semi-homogeneous.

    Finally we show that if $\tilde{\pi}_*\F$ is semi-homogeneous, then $\F$ is semi-homogeneous. Notice that $\Y \times_{\X} \Y \cong \Y \times \ker(\pi)$ where the two projection maps are projection, $p$ to $\Y$, and the twisted action of $\ker{\pi}$ on $\Y$, $m|_{\Y \times \ker(\pi)}$ to $\Y$ (see the remark below.) So we have \[\tilde{\pi}^* \tilde{\pi}_* \F \cong p_* m^* \F \cong \oplus_{a \in \ker{\pi}(k)} a \cdot \F\] where $a \cdot (-)$ is the descent action, see the remark below.
    The second isomorphism is by the fact that $\Char{k} = 0$ hence $\ker{\pi^{\vee}}$ is etale. 
    By what we showed above and the assumption, we see that $\tilde{\pi}^* \tilde{\pi}_* \F$ is semi-homogeneous, so as a direct summand (when $a = 0$, the action is trivial), $\F$ is semi-homogeneous as desired.
\end{proof}

\begin{remark}
    In the case for $\tilde{\pi}: \Y \rightarrow \X$ where $\Y \cong \BGm$, the descent action of $\ker{\pi}$ on $\tilde{\pi}^* \F$ is twisted by a line bundle $\delta_{\ker{\pi}}$. More concretely, when $\ker{\pi}$ is etale, for any $\F$ on $\X$, $a \in \ker{\pi}(k)$, $a \cdot \tilde{\pi}^*\F \cong t_a^* \tilde{\pi}^*\F \otimes \delta_a$. This is by \cite[8.11]{olsson2025twisted}.
\end{remark}

\begin{theorem}
\label{linebundle}
If $\E$ is a $1$-twisted simple semi-homogeneous vector bundle (recall that simple means $\Hom(\E,\E) = k$,) there exists an isogeny $f : X' \rightarrow X$ such that $\X' := \X \times_X X' \cong \BGm_{,X'}$ and $\E \cong f_* \LL$ for some 1-twisted line bundle on $\X'$ (here we also denote the base change of $f$ under $\pi : \X \rightarrow X$ by $f$.)
\end{theorem}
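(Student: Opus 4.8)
The plan is to follow Mukai's argument for the untwisted case \cite{mukai1978semi}, replacing his theta group by the $\Gm$-gerbe $\widetilde{\Phi^{00}}(\E) \to \Phi^{00}(\E)$ built above and using the structure of $\AAut^0_{\X}$ from \cite{olsson2025twisted}. First I would fix the isogeny. Set $H := \Phi^{00}(\E)$: by Proposition \ref{dimension} this is an abelian variety of dimension $d$, because $\E$ is semi-homogeneous. The natural homomorphism $\rho \colon \Aut^0_{\X} \to \Aut^0_X = X$ restricts to $f_0 \colon H \to X$, and $\ker f_0$ is the subgroup of those $L \in \Pic^0_X$ (here $\Pic^0_X = \ker \rho$ by \cite{olsson2025twisted}) for which $\E \otimes L \cong \E$; this is finite because $\E$ is simple, since taking determinants forces $L^{\rk \E} \cong \mathcal{O}_X$. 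Hence $f_0$ is an isogeny; write $\widetilde H := \widetilde{\Phi^{00}}(\E) \subset \AAut^0_{\X}$ for the $\Gm$-gerbe over $H$ it lies under, and $\tilde f_0 \colon \X \times_X H \to \X$ for the base change of $f_0$.

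The heart of the proof is to show that, after replacing $H$ by a suitable isogenous quotient, $\E$ pulls back to a direct sum of $1$-twisted invertible sheaves. Over $\widetilde H$ the universal automorphism $\tilde\sigma$ of $\X$ carries a canonical isomorphism $\tilde\sigma^*\E \cong \E \otimes \mathcal{N}$ for a line bundle $\mathcal{N}$ on $\widetilde H$ — canonical because $\E$ is simple, so the sheaf $\HHom(\tilde\sigma^*\E, \E)$ pushes forward to an invertible sheaf. Restricting $\widetilde H$ to the preimage of $\ker f_0$ exhibits it as a central extension of the finite étale (as $\Char k = 0$) group scheme $\ker f_0$ by $\Gm$, equipped with a skew-symmetric commutator pairing, in exact analogy with \cite[Section 5]{mukai1978semi}. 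Picking a maximal isotropic subgroup $\Lambda \subset \ker f_0$ along which the relevant $1$-twisted invertible sheaf admits a linearization yields a factorization $H \to X' := H/\Lambda \xrightarrow{f} X$ with $\deg f = \rk \E$, and descent along $H \to X'$ shows that $\tilde f^*\E$ on $\X' := \X \times_X X'$ is a direct sum of $1$-twisted invertible sheaves, permuted transitively by $\ker f$. For $\X = \BGm_{,X}$ this is literally Mukai's computation; the only new feature is that every sheaf and extension occurring is to be interpreted in the $1$-twisted category on a $\Gm$-gerbe.

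To conclude, I would assemble the pieces. Since $\X'$ carries a $1$-twisted invertible sheaf, the gerbe $\X' \to X'$ is trivial, so $\X' \cong \BGm_{,X'}$. Writing $\tilde f^*\E \cong \bigoplus_i \LL_i$ with the $\LL_i$ as above, the trace map splits $\E$ off as a direct summand of $\tilde f_*\tilde f^*\E = \bigoplus_i \tilde f_*\LL_i$. The choice of $\Lambda$ forces each $\tilde f_*\LL_i$ to be simple, and $\rk \tilde f_*\LL_i = \deg f = \rk \E$; so by the Krull--Schmidt theorem (applicable because $\Coh(\X)^{(1)}$ satisfies the hypotheses of \cite[Corollary of Lemma 3]{atiyah1956krull}, as noted above) the simple object $\E$ must be isomorphic to some $\tilde f_*\LL_i$. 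Taking $\LL := \LL_i$ gives the statement.

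The hard part will be the middle paragraph: proving that $\widetilde{\Phi^{00}}(\E)$ genuinely behaves like Mukai's theta group — that it is a central $\Gm$-extension of $\Phi^{00}(\E)$ with a well-behaved commutator pairing, and that $1$-twisted invertible sheaves descend along the isotropic quotient $H \to H/\Lambda$ — so that the arguments of \cite{mukai1978semi} transpose without loss. This is exactly where the structural results of \cite{olsson2025twisted} on $\AAut^0_{\X}$ are needed, and where the hypothesis $\Char k = 0$ is used, since it makes $\ker f_0$ and $\Lambda$ étale and hence reduces the descent obstruction theory to the classical case.
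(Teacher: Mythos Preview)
Your approach is plausible but takes a genuinely different route from the paper. The paper does \emph{not} set up a twisted theta group or analyze a commutator pairing on $\ker f_0$; instead it argues by an iterative reduction on the order of $\Phi_0(\E) := \ker(\Phi^{00}(\E)\to X)$. At each step one picks a simple (prime-order) subgroup $G\subset \Phi_0(\E)$, forms the dual isogeny $f\colon X'\to X$ with $\ker f^\vee = G$, and computes $\End_{\X'}(f^*\E)\cong k[x]/(x^l-1)$ directly from $f_*f^*\E\cong \E^{\oplus l}$; this forces $f^*\E\cong \E_1\oplus\cdots\oplus \E_l$ with each $\E_i$ simple and $f_*\E_i\cong \E$, and a short lemma shows $\ord\Phi_{0,\X'}(\E_i)\le \ord\Phi_{0,\X}(\E)/l$. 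Iterating, one reaches a simple semi-homogeneous bundle with $\Phi_0=0$, and a separate lemma shows that in that case $\Phi^{00}(\E)\xrightarrow{\sim} X$ produces a section $X\to \X$, trivializing the gerbe, after which Mukai's untwisted result finishes. Your theta-group route is more conceptual and one-shot, and it recovers the expected degree $\deg f=\rk\E$ directly; but it requires proving in the twisted setting that the $\Gm$-extension over $\ker f_0$ has nondegenerate commutator (equivalently $|\Phi_0(\E)|=(\rk\E)^2$) and that $1$-twisted line bundles descend along the isotropic quotient---exactly the ``hard part'' you flag. The paper's approach sidesteps all of this: it never needs $|\Phi_0(\E)|=(\rk\E)^2$, never needs a pairing, and reduces the gerbe-triviality to a clean geometric argument rather than the existence of a twisted line bundle.
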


\begin{proof}
    Let $\Phi_0(\E) = \ker(h: \Phi^{00}(\E) \rightarrow X)$. $\Phi_0(\E)$ is a subgroup scheme of $X^\vee$ and $\Phi_0(\E)(k)$ is the group of line bundles $\LL$ such that $\E \otimes \LL \cong \LL$. Let $G$ be a simple subgroup of $\Phi_0(\E)$, let $Y : = (X^\vee / G)^\vee$, and let $X': = Y^\vee$. Since we have the isogeny $X^{\vee} \rightarrow Y$, we get the map $X' \rightarrow X$ by dualizing. 
    
    We know that the order of $G$, $l$, is prime.
    
    We have \[f_* f^* \E \cong f_*(\mathcal{O}_{\X'} \otimes f^* \E) \cong \E \otimes f_* f^* \mathcal{O}_{\X'} \cong \oplus_{\LL \in \ker{f^\vee}(k)} \E \otimes \LL \cong \E^{\oplus l}\]
    This implies that $\Hom_{\X'}(f^* \E, f^* \E) \cong \Hom_{\X}(f_*f^* \E, \E) \cong \oplus_{\LL \in \ker{f^\vee}(k)}\Hom_{\X}(\E \otimes \LL, \E)$ which is isomorphic to $k[x]/(x^l -1)$ as $k$-algebras (since $l$ is prime, $\ker{f^\vee}(k) \cong \mathbb{Z}/l \mathbb{Z}$). So we have \[f^* \E \cong \E_1 \oplus \cdots \oplus \E_l\] for simple non-isomorphic $E_i$'s. This shows that we also have \[f_* f^* \E \cong f_* \E_1 \oplus \cdots \oplus f_* \E_l.\]
    By Krull-Schmitz theorem, we have $f_* \E_i \cong \E$ for all $i$.

    \begin{lemma}
        $\ord(\Phi_{0,\X'}(\E_i)) \leq \frac{\ord(\Phi_{0,\X}(\E))}{\ord{(G)}}$.
    \end{lemma}
    \begin{proof}
        Note that all groups schemes in the statement are discrete, so it suffices to show the inequality for the $k$-points.
        
        Given any $\LL \in S_{\X'}(\E_i)$, $\LL 
        \cong f^* {\N}$ for some $\N \in \Pic^0{X}$. We have that
        \[f_*(\E_i \otimes \LL) \cong f_* (\E_i \otimes f^* \N) \cong f_* \E_i \otimes \N \cong \E \otimes \N \cong \E \cong f_* \E_i\]

        Here we think of $\LL, \N$ as 0-twisted line bundles on $\X$ and $\X'$ resp. This implies that $\N \otimes \E \cong \E$, and we know that there are precisely $\ord{(G)}$ many such $\N$ in $\Pic^0{X}$ in the preimage of $\{\LL\}$.
    \end{proof}

    Repeating the process of quotienting out by simple subgroups of $\Phi_0(\E_i)$, using the lemma one can see that after finitely many times we get an isogeny such that $\Phi_0(\E_i) = 0$.

    Then we conclude the theorem by the following lemma:

    \begin{lemma}
        If $\Phi_0(\E) = 0$ and $\E$ is simple, then $\E$ is a line bundle.
    \end{lemma}
    \begin{proof}
        We see that $\Phi_0(\E) = \ker(h: \Phi^{00}(\E) \rightarrow X)$, so the assumption that $\Phi_0(\E) = 0$ implies that $h$ is an isomorphism. Now, Let $\tilde{\alpha_h}$ be the morphism gotten by pulling back the universal object on $X \times_k X$ via $(h, p)$, which is an automorphism of $\Phi^{00}(\E) \times_k \X$. Consider the following commutative diagram:

    \[\begin{tikzcd}
	{\Phi^{00}(\E)\times_k{\BGm}} & {\Phi^{00}(\E)\times_k{\X}} & {\Phi^{00}(\E)\times_k {\X}} & {\X} \\
	{\Phi^{00}(\E)} & {\Phi^{00}(\E) \times_kX} & {\Phi^{00}(\E) \times_kX} & X
	\arrow[from=1-1, to=1-2]
	\arrow[from=1-1, to=2-1]
	\arrow["{\tilde{\alpha_h}}", from=1-2, to=1-3]
	\arrow[from=1-2, to=2-2]
	\arrow[from=1-3, to=1-4]
	\arrow[from=1-3, to=2-3]
	\arrow[from=1-4, to=2-4]
	\arrow["{(id, 0)}", from=2-1, to=2-2]
	\arrow["{\alpha_h=(id_{\Phi^{00}(\E)}, h+id_X)}"', from=2-2, to=2-3]
	\arrow["{P_2}", from=2-3, to=2-4]
\end{tikzcd}\]

Now since the composition of the morphisms in the bottom row is $h$, which is an isomorphism, we see that there exists a section $X \rightarrow \X$, hence $\X \cong X \times_k \BGm$. We then conclude by \cite[Lemma 5.7]{mukai1978semi}.
    \end{proof}
\end{proof}

\begin{corollary}
\label{distinct}
    Let $\E$ be a $1$-twisted simple semi-homogeneous vector bundle, $f$ the isogeny in the theorem and $\LL$ be the line bundle such that $\E \cong f_* \LL$. Then $t_a^* \LL \otimes \delta_a \not \cong \LL$ for all nontrivial $a \in \ker(f)(k)$. 
    In other words, $\lambda_{\LL} : X \rightarrow X^{\vee}$ defined by $\lambda_{\LL}(a) = t_a^* \LL \otimes \LL^{-1}$ is injective on $\ker{f}$.
\end{corollary}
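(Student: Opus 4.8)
The plan is to compute $\dim_k\End_{\X'}(f^*\E)$ in two ways and compare.

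For the first computation I would pull back the decomposition $\E\cong f_*\LL$ along $f$. Using the description of $f^*f_*$ from the proof of Proposition~\ref{pullback} (which rests on $\X'\times_{\X}\X'\cong\X'\times\ker(f)$ together with \cite[8.11]{olsson2025twisted}),
\[
  f^*\E \;\cong\; f^*f_*\LL \;\cong\; \bigoplus_{a\in\ker(f)(k)} t_a^*\LL\otimes\delta_a .
\]
Since $\X'\cong\BGm_{,X'}$, the category of $1$-twisted coherent sheaves on $\X'$ is equivalent to $\Coh(X')$, so each summand $t_a^*\LL\otimes\delta_a$ may be viewed as a genuine line bundle on $X'$; as the $\delta_a$ lie in $\Pic^0(X')$ and translation fixes the class in $NS(X')$, all $n:=\deg f=|\ker(f)(k)|$ of these line bundles are numerically equivalent. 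Hence for $a,b\in\ker(f)(k)$ the line bundle $(t_a^*\LL\otimes\delta_a)^{-1}\otimes(t_b^*\LL\otimes\delta_b)$ lies in $\Pic^0(X')$, so it has a one-dimensional space of global sections when $t_a^*\LL\otimes\delta_a\cong t_b^*\LL\otimes\delta_b$ and none otherwise. Grouping the $n$ summands into isomorphism classes with multiplicities $m_1,\dots,m_r$ (so $\sum_j m_j=n$; since $\delta_0=\mathcal{O}$, the class of $\LL=t_0^*\LL\otimes\delta_0$ occurs, say with multiplicity $m_1$) this gives $\dim_k\End_{\X'}(f^*\E)=\sum_{j=1}^r m_j^2$.

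Then I would run the second computation via adjunction and the projection formula: $\End_{\X'}(f^*\E)\cong\Hom_{\X}(\E,f_*f^*\E)\cong\Hom_{\X}(\E,\E\otimes f_*\mathcal{O}_{\X'})$, and $f_*\mathcal{O}_{\X'}\cong\bigoplus_{\M\in\ker(f^\vee)(k)}\M$ as in the proof of Theorem~\ref{linebundle} (here $\Char(k)=0$ is used, so that $\ker(f^\vee)$ is etale of order $n$). Therefore $\dim_k\End_{\X'}(f^*\E)=\sum_{\M\in\ker(f^\vee)(k)}\dim_k\Hom_{\X}(\E,\E\otimes\M)$. For $\M\in\Pic^0(X)$ the bundle $\E\otimes\M$ is again simple and semi-homogeneous with $\delta(\E\otimes\M)=\delta(\E)$ in $NS(X)\otimes\mathbb{Q}$ and with the same numerical invariants as $\E$; by the stability of simple semi-homogeneous (twisted) bundles — the analogue in our setting of the facts used throughout \cite{mukai1978semi} — any nonzero homomorphism $\E\to\E\otimes\M$ is an isomorphism. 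Thus $\dim_k\Hom_{\X}(\E,\E\otimes\M)\le 1$, with equality exactly when $\E\otimes\M\cong\E$, and so $\dim_k\End_{\X'}(f^*\E)\le n$.

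Comparing the two computations, $\sum_j m_j^2\le n=\sum_j m_j$; since each $m_j\ge 1$ we have $m_j^2\ge m_j$, so equality holds termwise and every $m_j=1$. In particular $m_1=1$, that is, $t_a^*\LL\otimes\delta_a\not\cong\LL$ for every nontrivial $a\in\ker(f)(k)$; since $\ker(f)$ is etale this proves the first assertion, and the injectivity of $\lambda_\LL$ on $\ker(f)$ is the same statement expressed through the homomorphism $a\mapsto t_a^*\LL\otimes\delta_a\otimes\LL^{-1}$. The only substantive ingredient here is the inequality $\dim_k\Hom_{\X}(\E,\E\otimes\M)\le 1$, i.e.\ the stable behaviour of simple semi-homogeneous twisted bundles; everything else is bookkeeping with the two direct-sum decompositions of $f^*\E$ and of $f_*\mathcal{O}_{\X'}$. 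I expect establishing (or pinning down a clean reference for) this stability statement in the twisted context to be the main point requiring care.
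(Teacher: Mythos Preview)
Your double-count of $\dim_k\End_{\X'}(f^*\E)$ works, but the step you flag as substantive --- the inequality $\dim_k\Hom_\X(\E,\E\otimes\M)\le 1$ for $\M\in\ker(f^\vee)$ --- does not require any stability theory in the twisted setting, and appealing to stability is a detour. The isogeny $f$ of Theorem~\ref{linebundle} is built precisely so that $\ker(f^\vee)\subset\Phi_0(\E)$: each step quotients by a subgroup of the current $\Phi_0$, and if $\M\in X^\vee$ has $f_1^*\M\in\Phi_0(\E_1)$ then the projection formula gives $\E\otimes\M\cong f_{1*}(\E_1\otimes f_1^*\M)\cong f_{1*}\E_1\cong\E$, so the inclusion persists under composition. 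Hence $\E\otimes\M\cong\E$ for every $\M\in\ker(f^\vee)$, and your inequality is actually the equality $\Hom_\X(\E,\E\otimes\M)=\End_\X(\E)=k$. This is exactly the computation carried out (step by step) inside the proof of Theorem~\ref{linebundle}, and it is what the paper's two-line proof invokes: the paper simply recalls that $f^*\E$ was already shown there to split into $\deg f$ pairwise non-isomorphic simple summands, and then Krull--Schmidt matches these against the summands $t_a^*\LL\otimes\delta_a$ of your first decomposition.

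You can also shortcut your argument entirely. The multiplicity $m_1$ of $\LL$ among the $t_a^*\LL\otimes\delta_a$ equals $\dim_k\Hom_{\X'}(f^*\E,\LL)$, which by adjunction is $\dim_k\Hom_\X(\E,f_*\LL)=\dim_k\End_\X(\E)=1$. So simplicity of $\E$ alone already gives $m_1=1$, which is the assertion; the second computation and the $\sum m_j^2\le n$ comparison are not needed.
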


\begin{proof}
     We have that \[f^* \E \cong f^*f_* \LL \cong \bigoplus_{a \in \ker(f)} (t_a^*\LL \otimes \delta_a).\]
    As showed in Theorem \ref{linebundle}, we also have \[f^* \E \cong \LL_1 \oplus \LL_2 \cdots \LL_{l}\] where all summands are simple and distinct.
    Then by the Krull-Schimidt theorem for locally free sheaves, we know that the first decomposition is a direct sum of $l$ distinct simple sheaves, hence $t_a^* \LL \otimes \delta_a \not \cong \LL$ for all $0 \neq a \in \ker(f)(k)$.
\end{proof}
\subsection{Decompositions}

Let $\pi: Y \rightarrow X$ be an isogeny such that $\Y : = \X \times_{X, \pi} Y$ is the trivial $\Gm$-gerbe over $Y$. For simplicity, we abuse the notation and also denote $\pi: \Y \rightarrow \X$.

\[\begin{tikzcd}
	{\Y \times_k \ker(\pi)} & {\Y \times_{\X} \Y} & \Y \\
	& \Y & \X
	\arrow["\cong", from=1-1, to=1-2]
	\arrow["m", curve={height=-18pt}, from=1-1, to=1-3]
	\arrow["p", from=1-1, to=2-2]
	\arrow[from=1-2, to=1-3]
	\arrow[from=1-2, to=2-2]
	\arrow[from=1-3, to=2-3]
	\arrow[from=2-2, to=2-3]
\end{tikzcd}\]

Let $\mathcal{C}$ be category that consists of objects and morphisms on $\Y$ with descent data to $\X$. Concretely, objects in $\mathcal{C}$ are pairs $(\F, \gamma)$ where $\F$ is a $1$-twisted coherent sheaf on $\Y$ and $\gamma: m^* \E \cong p^*\E$ a descent data that satisfies the cocycle conditions (cf. \cite[4.2.1.1]{olsson2023algebraic}); morphisms in $\mathcal{C}$ are the sheaf morphisms that respect the descent data, that is, $(\F, \gamma) \rightarrow (\F', \gamma')$ is a morphism $f: \F \rightarrow \F'$ on $\Y$ such that the following diagram commutes.
    \[\begin{tikzcd}
	{m^* \F} & {p^*\F} \\
	{m^* \F'} & {p^*\F'}
	\arrow["\gamma", from=1-1, to=1-2]
	\arrow["{m^*(f)}"', from=1-1, to=2-1]
	\arrow["{p^*(f)}", from=1-2, to=2-2]
	\arrow["{\gamma'}", from=2-1, to=2-2]
    \end{tikzcd}\]

\begin{remark}
    In the case where $\pi$ is separable (eg. $\Char{k} = 0$,) since $\ker{\pi}$ is a finite group scheme, it is discrete. So we can equivalently define the objects in $\mathcal{C}$ as $(\F, \{\gamma_a\}_{a \in \ker{\pi}(k)})$, where $\gamma_a : t_a^* \F \otimes \delta_a \rightarrow \F$ on $\Y$ and satisfies $\gamma_a \circ \gamma_b = \gamma_{ab}$. And correspondingly, morphisms $(\F, \{\gamma_a\}_{a \in \ker{\pi}(k)}) \rightarrow (\F', \{\gamma'_a\}_{a \in \ker{\pi}(k)})$ are $f: \F \rightarrow \F'$ such that the following diagram commutes. 
    \[\begin{tikzcd}
	{t_a^* \F \otimes \delta_a} & \F \\
	{t_a^* \F' \otimes \delta_a} & {\F'}
	\arrow["{\gamma_a}", from=1-1, to=1-2]
	\arrow["{t_a^*(f) \otimes \delta_a}"', from=1-1, to=2-1]
	\arrow["f", from=1-2, to=2-2]
	\arrow["{\gamma_a'}", from=2-1, to=2-2]
\end{tikzcd}\]

\end{remark}

\begin{definition}
    We say $\F \in \Coh(\Y)^{(1)}$ is semi-stable if $\F \otimes \chi^{-1} \in \Coh(Y)$ is semi-stable (c.f \cite[Definition 1.2.4]{huybrechts2010geometry}.) 

    We define the reduced Hilbert polynomial of $\F$ to be $p(\F) : =p(\F \otimes \chi^{-1})$. 
\end{definition}

\begin{proposition}
\label{filtration}
     Given any $\E \in \mathcal{C}$ semi-stable, there is a filtration \[0 = \F_0 \subset \F_1 \subset \cdots \subset \F_n = \E\] such that 
    \begin{enumerate}
        \item All of $\F_i$ and $\E_i := \F_i/\F_{i-1}$ with compatible descent data with $\E$ are in $\mathcal{C}$;
        \item $\F_i$'s and $\E_i$'s are semi-stable with $p(\F_i) = p(\E_i) = p(\E)$ where $p(-)$ denotes the reduced Hilbert polynomial on $Y$ (c.f \cite[Definition 1.2.3]{huybrechts2010geometry});
        \item $\E_i$'s are simple in $\mathcal{C}$ (ie. $\Hom_{\mathcal{C}}(\E_i, \E_i) = k$.)
    \end{enumerate}
\end{proposition}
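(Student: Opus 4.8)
The plan is to realise the required filtration as a composition series inside a suitable abelian subcategory of $\mathcal{C}$, and then to invoke Schur's lemma to pass from ``simple object'' to ``$\Hom_{\mathcal{C}}(\E_i,\E_i)=k$''. Write $p:=p(\E)$ and let $\mathcal{A}_p\subseteq\Coh(Y)$ be the full subcategory of semistable sheaves with reduced Hilbert polynomial $p$. By \cite[Section 1.5]{huybrechts2010geometry} this is an abelian category in which every morphism is strict and every object has finite length. Define $\mathcal{C}_p\subseteq\mathcal{C}$ to be the full subcategory of those $(\F,\gamma)$ with $\F\otimes\chi^{-1}\in\mathcal{A}_p$; by hypothesis $\E\in\mathcal{C}_p$.

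First I would check that $\mathcal{C}_p$ is an abelian subcategory of $\mathcal{C}$. The category $\mathcal{C}$ is abelian: since $\pi$ is a finite flat surjection, fppf descent identifies $\mathcal{C}$ with $\Coh(\X)^{(1)}$, and under this identification the forgetful functor $\mathcal{C}\to\Coh(\Y)^{(1)}$ is pullback along the flat map $\pi$, hence faithful and exact. Consequently, for a morphism $f$ in $\mathcal{C}$ the kernel, image and cokernel of $f$ in $\mathcal{C}$ have as underlying $1$-twisted sheaves the corresponding kernel, image and cokernel on $\Y$, equipped with the descent data induced from those on the source and target of $f$. If moreover $f$ is a morphism in $\mathcal{C}_p$, then by strictness in $\mathcal{A}_p$ these underlying sheaves (twisted by $\chi^{-1}$) again lie in $\mathcal{A}_p$; hence they lie in $\mathcal{C}_p$, and $\mathcal{C}_p$ is abelian and stable under subobjects, quotients and extensions in $\mathcal{C}$.

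Next I would show that $\E$ has finite length in $\mathcal{C}_p$ and extract a composition series. The forgetful functor $\mathcal{C}_p\to\mathcal{A}_p$, $(\F,\gamma)\mapsto\F\otimes\chi^{-1}$, is faithful and exact, and a morphism of $\mathcal{C}_p$ inducing an isomorphism of underlying sheaves is itself an isomorphism; hence any strictly increasing chain of subobjects of $\E$ in $\mathcal{C}_p$ maps to a strictly increasing chain of subsheaves of $\E\otimes\chi^{-1}$ in $\mathcal{A}_p$, whose length is bounded by that of a Jordan--H\"older filtration of $\E\otimes\chi^{-1}$. Thus $\E$ has finite length in $\mathcal{C}_p$, and there is a composition series
\[
0=\F_0\subset\F_1\subset\cdots\subset\F_n=\E
\]
in $\mathcal{C}_p$ with each $\E_i:=\F_i/\F_{i-1}$ a nonzero object of $\mathcal{C}_p$ having no proper nonzero subobject. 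By construction all the $\F_i$ and $\E_i$ lie in $\mathcal{C}_p\subseteq\mathcal{C}$ and the maps $\F_{i-1}\hookrightarrow\F_i$ and $\F_i\twoheadrightarrow\E_i$ are morphisms in $\mathcal{C}$, so the descent data on $\F_i$ and $\E_i$ are compatible with that on $\E$; this is (1). Since $\F_i,\E_i\in\mathcal{C}_p$, their underlying sheaves are semistable with $p(\F_i)=p(\E_i)=p=p(\E)$; this is (2).

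For (3), $\End_{\mathcal{C}}(\E_i)=\End_{\mathcal{C}_p}(\E_i)$ because $\mathcal{C}_p$ is full in $\mathcal{C}$, and this algebra is finite-dimensional over $k$ since it embeds into $\Hom_{\Coh(\Y)^{(1)}}(\E_i,\E_i)\cong\Hom_{\Coh(Y)}(\E_i\otimes\chi^{-1},\E_i\otimes\chi^{-1})$, which is finite-dimensional as $Y$ is proper over $k$. Because $\E_i$ is a simple object of the abelian category $\mathcal{C}_p$, Schur's lemma shows $\End_{\mathcal{C}_p}(\E_i)$ is a division algebra; being finite-dimensional over the algebraically closed field $k$, it equals $k$, so $\Hom_{\mathcal{C}}(\E_i,\E_i)=k$, which is (3). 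The step I expect to be the main obstacle is the abelian-category bookkeeping in the second paragraph: checking carefully that sheaf-theoretic kernels and cokernels of morphisms in $\mathcal{C}$ inherit compatible descent data and compute kernels/cokernels in $\mathcal{C}$, and combining this with the strictness statement to see that $\mathcal{C}_p$ is closed under these operations. Once that is in place, the finite-length argument and Schur's lemma are formal.
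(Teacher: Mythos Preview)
Your argument is correct. The route differs from the paper's in packaging rather than in essential content, but the repackaging is worth noting.

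The paper proceeds by hand: starting from a non-simple $\E$ it picks a non-invertible nonzero $f\in\End_{\mathcal{C}}(\E)$, sets $\F=\im f$, and then verifies directly (using \cite[Proposition 1.2.6]{huybrechts2010geometry} and purity of $\E$) that $\F$ is already saturated in $\E$, so that $\E/\F$ is torsion-free, and that both $\F$ and $\E/\F$ are again semistable with the same reduced Hilbert polynomial. Iterating this and tracking that rank drops gives the filtration; simplicity of the graded pieces is the stopping condition.

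You instead observe once and for all that the category $\mathcal{A}_p$ of semistable sheaves with reduced Hilbert polynomial $p$ is abelian of finite length, lift this structure through the faithful exact forgetful functor to obtain an abelian finite-length category $\mathcal{C}_p$, and then simply take a composition series and apply Schur's lemma. The saturation and semistability bookkeeping that the paper does explicitly is absorbed into the citation of \cite[Section~1.5]{huybrechts2010geometry}. Your version is cleaner and makes the Jordan--H\"older nature of the filtration transparent; the paper's version is more self-contained and makes the role of purity and saturation visible. Either way one arrives at the same filtration.
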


\begin{proof}
    If $\Hom_{\mathcal{C}}(\E, \E)= k$, we are done. 
    
    If not, there exists a $f: \E \rightarrow \E$ in $\mathcal{C}$ that is neither surjective (injective) or $0$. Indeed, if $f$ is surjective (injective) or $0$, we would have that $f$ is an isomorphism or the zero map. Since $k = \bar{k}$, and $\End_{\mathcal{C}}(\E)$ is finite dimensional over $k$, this would imply that $\Hom_{\mathcal{C}}(\E, \E)= k$.

    Let $\F = \im{f}$, we have $\F \in \mathcal{C}$. By \cite[Proposition 1.2.6]{huybrechts2010geometry}, $p(\F) \geq p(\E)$. By \cite[Proposition 6.13]{mukai1978semi}, $\E$ is semi-stable, we then have $p(\F) = p(\E)$. $\E$ is semi-stable, hence pure, we see that since $\ker{f} \not \cong 0$, $\rk({\ker{f}}) \neq 0$, so $\rk{\F} < \rk{\E}$.

    Let $\tilde{\F}$ be the saturation of $\F \subset \E$. Since $\tilde{\F}/\F$ is torsion, we have $p(\E) = p(\F) \leq p(\tilde{\F})$. Since $\E$ is semi-stable, we have $p(\tilde{\F}) = p(\F)$, which implies that $\tilde{\F}/\F = 0$, hence $\tilde{\F} = \F$. Therefore, $Q = \E/\F$ is torsion free and $p(\E/\F) = p(\F) = p(\E)$.

    Now we show that $Q = \E/\F$ is semi-stable. We already showed that it is torsion free, so it suffices to show that for any nontrivial subsheaf $Q' \subset Q$ with $\rk{Q'} < \rk{Q}$, where $p(Q') \leq p(Q)$. 
    For any such subsheaf $Q' \subset Q$ with $\rk{Q'} < \rk{Q}$, there is an exact sequence \[0 \rightarrow \F \rightarrow \tilde{Q}' \rightarrow Q' \rightarrow 0\] for some $\tilde{Q}' \subset \E$. Since $Q$ is torsion free, $\rk{Q'} > 0$. So $p(\F) = p(\E) \geq p(\tilde{Q}')$, we then have $p(\F) \geq p(\tilde{Q}') \geq p(Q')$. So $p(Q) = p(\F) \geq p(Q')$ as desired.

    We now have $0 \subset \F \subset \E$ such that $\F \xhookrightarrow{} \E$, and hence $\E/\F$, are in $\mathcal{C}$ and semi-stable with $p(\E/\F) = p(\F) = p(\E)$. Repeating this process, we get the desired filtration. $\Hom_{\mathcal{C}}(\E_i, \E_i)= k$ since otherwise we can apply the above process to $\E_i$ to get a refined filtration that satisfies the desired conditions. 
    \end{proof}

\begin{theorem}
\label{summand}

If a 1-twisted vector bundle $\E$ on $\X$ is semi-homogeneous then $\E \cong \bigoplus_{Q_{\sigma}}U_{\X, Q_{\sigma}}$ where $U_{\X, Q_{\sigma}}$ is a filtration whose successive quotients are $Q_{\sigma}$, and $Q_\sigma$'s are non-isomorphic simple semi-homogeneous, all $\delta(Q_{\sigma}):= \frac{\det(Q_{\sigma})}{\rk(Q_{\sigma})}$ are equal in $NS(X) \otimes_{\mathbb{Z}} \mathbb{Q}$, and only finitely many $U_{\X, Q_{\sigma}} \neq 0$. 
\end{theorem}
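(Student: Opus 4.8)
The plan is to read off the decomposition from the Jordan--H\"older-type filtration of Proposition \ref{filtration}: I will show that every graded piece is a simple semi-homogeneous vector bundle, that these pieces all carry the same invariant $\delta$, and that $\E$ then splits along their distinct isomorphism classes.

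First the reduction. Since $\E$ is semi-homogeneous it is Gieseker semi-stable: after pulling back along an isogeny $\pi\colon Y\to X$ with $\Y$ trivial, $\pi^{*}\E$ is semi-homogeneous by Proposition \ref{pullback} and hence semi-stable by \cite[Proposition 6.13]{mukai1978semi}. By faithfully flat descent along $\Y\to\X$ the category $\mathcal{C}$ is equivalent to $\Coh(\X)^{(1)}$, and under this equivalence $\E$ becomes a semi-stable object of $\mathcal{C}$ in the sense of Proposition \ref{filtration}. That proposition then provides a filtration $0=\F_{0}\subset\F_{1}\subset\cdots\subset\F_{n}=\E$ in $\mathcal{C}$ whose graded pieces $\E_{i}=\F_{i}/\F_{i-1}$ are simple as $1$-twisted sheaves on $\X$, semi-stable, torsion-free, and satisfy $p(\E_{i})=p(\E)$.

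Next I would upgrade the $\E_{i}$. For $\sigma\in\Phi^{00}(\E)(k)$ one has $\sigma^{*}\E\cong\E$, and $\sigma^{*}$ is an exact autoequivalence of $\Coh(\X)^{(1)}$ preserving the length category $\mathcal{A}_{p}$ of semi-stable sheaves with reduced Hilbert polynomial $p=p(\E)$ (a translation acts trivially on $NS(X)$ and on Hilbert polynomials, and the residual gerbe part of $\sigma$ contributes only a $\Pic^{0}$-twist); hence $\sigma^{*}$ permutes the Jordan--H\"older factors of $\E$. For fixed $i$ the loci $\{\sigma\in\Phi^{00}(\E):\sigma^{*}\E_{i}\cong\E_{j}\}$ are cosets of the closed subgroup $\{\sigma:\sigma^{*}\E_{i}\cong\E_{i}\}$ (by the argument of Proposition \ref{dimension}) and partition the connected group $\Phi^{00}(\E)$, so $\sigma^{*}\E_{i}\cong\E_{i}$ for all $\sigma$ and all $i$; therefore $\Phi^{00}(\E)\subseteq\Phi^{0}(\E_{i})$, $\dim\Phi^{0}(\E_{i})=d$, and $\E_{i}$ is semi-homogeneous by the characterization of semi-homogeneity via $\dim\Phi^{0}=d$. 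Its non-locally-free locus is then a proper closed subset of $X$ invariant under all translations, hence empty, so $\E_{i}$ is a vector bundle; Theorem \ref{linebundle} applies, $\E_{i}\cong f_{i*}\LL_{i}$, and the projection formula together with $\mathrm{td}_{f_{i}}=1$ and translation-invariance of $NS(X)$ shows $\delta(\E_{i})\in NS(X)\otimes\mathbb{Q}$ is well defined and $\mathrm{ch}(\E_{i})=\rk(\E_{i})\,e^{\delta(\E_{i})}$. To see the $\delta(\E_{i})$ all agree, set $\nu_{i}=\delta(\E_{i})-\delta(\E)$: additivity of $c_{1}$ in the filtration gives $\sum_{i}\rk(\E_{i})\nu_{i}=0$; vanishing of the discriminant of the semi-homogeneous bundle $\E$ (imported from \cite{mukai1978semi}) together with $\mathrm{ch}(\E_{i})=\rk(\E_{i})e^{\delta(\E_{i})}$ and additivity of $\mathrm{ch}_{2}$ gives $\sum_{i}\rk(\E_{i})\nu_{i}^{2}=0$ in $H^{4}(X,\mathbb{Q})$; and $p(\E_{i})=p(\E_{j})$ forces $(\delta(\E_{i})+mH)^{d}=(\delta(\E_{j})+mH)^{d}$, whence $\nu_{i}H^{d-1}=0$, i.e.\ each $\nu_{i}$ lies in $H^{\perp}=\{\xi\in NS(X)\otimes\mathbb{Q}:\xi H^{d-1}=0\}$. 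Cupping $\sum_{i}\rk(\E_{i})\nu_{i}^{2}=0$ with $H^{d-2}$ and invoking the Hodge index theorem (negative-definiteness of $\xi\mapsto\xi^{2}H^{d-2}$ on $H^{\perp}$) then forces every $\nu_{i}=0$.

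Finally I would split $\E$. Let $Q_{1},\dots,Q_{m}$ be the distinct isomorphism classes among the $\E_{i}$: these are pairwise non-isomorphic simple semi-homogeneous vector bundles with a common $\delta$. The key lemma is that $\Ext^{i}_{\X}(Q_{a},Q_{b})=0$ for all $i$ whenever $a\neq b$. To see this, write $Q_{a}\cong f_{a*}\LL_{a}$ (Theorem \ref{linebundle}); since $f_{a}$ is finite etale one has $f_{a}^{!}=f_{a}^{*}$, and adjunction, flat base change, the projection formula, and the vanishing of higher direct images along the finite map identify $\Ext^{\bullet}_{\X}(Q_{a},Q_{b})$ with $H^{\bullet}(W,N)$ for a line bundle $N$ on $W:=X'_{a}\times_{X}X'_{b}$; because $\delta(Q_{a})=\delta(Q_{b})$, $N$ has numerically trivial first Chern class, so $H^{\bullet}(W,N)$ vanishes in all degrees as soon as $H^{0}(W,N)=\Hom_{\X}(Q_{a},Q_{b})=0$, which holds by Schur's lemma. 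Granting the lemma, $\E$ is an object of the length category $\mathcal{A}_{p}$ whose simple constituents all lie in the pairwise non-isomorphic family $\{Q_{a}\}$ with no $\Ext^{1}$ between distinct members, so the standard block decomposition yields $\E\cong\bigoplus_{a}U_{\X,Q_{a}}$ with $U_{\X,Q_{a}}$ admitting a filtration all of whose successive quotients are $\cong Q_{a}$, only finitely many nonzero since $\E$ has finite length; each $U_{\X,Q_{a}}$ is a direct summand of the vector bundle $\E$, hence itself a (semi-homogeneous) vector bundle. The steps I expect to be the main obstacle are the common value of $\delta$ — which needs the correct numerical inputs assembled, in particular the vanishing of the discriminant of $\E$ from the untwisted theory, together with the Hodge index theorem — and the block decomposition, which rests on the $\Ext$-vanishing lemma and thus on Theorem \ref{linebundle}; one must also take care that ``simple'', ``semi-stable'', ``$\delta$'' and the $\Phi$-constructions transfer correctly between $\X$, its coarse space, and the trivialising cover, though each individual verification is routine.
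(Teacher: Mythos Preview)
Your route differs substantively from the paper's. The paper never tries to prove semi-homogeneity or $\delta$-equality of the graded pieces directly on $\X$: it refines the $\mathcal{C}$-filtration of Proposition~\ref{filtration} into the honest Jordan--H\"older filtration of $\pi^{*}\E\otimes\chi^{-1}$ on $Y$, imports Mukai's untwisted structure theory \cite[6.15--6.19]{mukai1978semi} wholesale to see that every $\F_{s}$ and every successive quotient is semi-homogeneous with $\delta=\delta(\pi^{*}\E)$ on $Y$, and then descends; the final block decomposition on $\X$ is obtained from the forward-referenced Corollary~\ref{relation}. By contrast you attempt a self-contained argument on $\X$: a connectedness/coset argument on $\Phi^{00}(\E)$ for semi-homogeneity of the $\E_{i}$, a Hodge-index computation for $\delta$-equality, and a direct cohomological proof of the $\Ext$-vanishing via Theorem~\ref{linebundle}. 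Your $\Ext$-vanishing lemma is a clean alternative to the $\Sigma$-machinery the paper builds in Section~4, and your Hodge-index step is attractive; but the first step has a genuine gap that propagates through the rest.

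The problem is that the graded pieces $\E_{i}$ produced by Proposition~\ref{filtration} satisfy only $\End_{\X}(\E_{i})=k$; that proposition does \emph{not} assert they are the Jordan--H\"older factors of $\E$ in the length category $\mathcal{A}_{p}$, and an object with one-dimensional endomorphisms in a finite-length abelian category need not be simple (the indecomposable length-two representation of the $A_{2}$ quiver already shows this). Consequently, from $\sigma^{*}\E\cong\E$ you cannot conclude $\sigma^{*}\E_{i}\cong\E_{j}$ for some $j$: the two filtrations $\F_{\bullet}$ and $\sigma^{*}\F_{\bullet}$ both satisfy the hypotheses of Proposition~\ref{filtration}, but no uniqueness statement forces their graded multisets to agree. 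Without this the coset argument collapses, and with it the semi-homogeneity of the $\E_{i}$, which you then need for Theorem~\ref{linebundle}, for the formula $\mathrm{ch}(\E_{i})=r_{i}e^{\delta_{i}}$, and for the ``Schur'' step $\Hom(Q_{a},Q_{b})=0$ (which really wants stability, not just $\End=k$). A related wrinkle: the input $\Delta(\E)=0$ for a \emph{non-simple} semi-homogeneous $\E$ is, in Mukai's paper, a consequence of the untwisted structure theorem itself, so ``importing'' it already drags in the decomposition on $Y$. The clean fix is to do what the paper does at this one point: pass to $Y$, where the genuine Jordan--H\"older factors are available and are permuted by $\sigma^{*}$, apply Mukai there, and then descend along the $\mathcal{C}$-filtration; after that your direct $\Ext$-vanishing argument can replace the forward reference to Corollary~\ref{relation}.
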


\begin{proof}
    We fix an isogeny $\pi \colon Y \rightarrow X$ that trivializes the gerbe $\X$. We see that $\delta_{\X}(\E_i):= \frac{\det(Q_i)}{rk(\E_i)}$ are equal in $NS(X) \otimes_{\mathbb{Z}} \mathbb{Q}$, and only finitely many $U_{\X, Q_i} \neq 0$ implies $\E$ being semi-homogeneous. This follows from the fact that $\delta_{\Y}(\pi^*Q_i) = \pi^* \delta_{\X}(Q_i))$, \cite[Theorem 6.19]{mukai1978semi} and Proposition \ref{pullback}.

    Now we show the forward direction.
    $\pi^* \E$ is semi-homogeneous, so $\pi^* \E \otimes \chi^{-1}$ is also semi-homogeneous. By \cite[6.13]{mukai1978semi}, $\pi^* \E$ is semi-stable, and we have a canonical descent data for $\pi^* \E$.
    
    By Proposition \ref{filtration}, we have a filtration \[0 = \F_0 \subset \F_1 \subset \cdots \subset \F_n = \pi^*(\E)\] that can be refined into the Jordan-Holder filtration of the semi-stable vector bundle $\pi^*\E \otimes \chi^{-1}$ (cf. \cite[Definition 1.5.1]{huybrechts2010geometry})

    \begin{align*}0 = \F_0 &= F_{1_0} \subset F_{1_1} \subset F_{1_2} \subset \cdots \subset F_{1_{m_1}} = \F_1 \otimes \chi^{-1}  \\&= F_{2_0} \subset F_{2_1} \subset \cdots \subset F_{2_{m_2}} = \F_2 \otimes \chi^{-1} \subset \cdots \subset F_{n_{m_n}} = \F_n \otimes \chi^{-1} = \pi^* \E \otimes \chi^{-1}.\end{align*}
    such that each $E_{i_j} := F_{i_j}/F_{i_{i-1}}$ is stable and $p(E_{i_j}) = p(\pi^*{\E})$.

    As in \cite[Proposition 6.15]{mukai1978semi} through \cite[Proposition 6.19]{mukai1978semi}. The Jordan-Holder filtration of $\pi^* \E \otimes \chi^{-1}$ gives the decomposition $\pi^* \E \otimes \chi^{-1} \cong \oplus_{E_{i_j}} U_{Y, E_{i_j}}$ where $E_{i_j}$'s are simple semi-homogeneous with $\delta_{Y}(E_{i_j}) = \delta_{Y}(\pi^* \E \otimes \chi^{-1})$.

    Thus, we see that $\F_s \otimes \chi^{-1} \cong \oplus_{i \leq s} U_{Y, E_{i_j}}$, which is semi-homogeneous by \cite[Proposition 6.19]{mukai1978semi}. So $\F_s$ is semi-homogeneous on $\Y$ for all $s$.

    Since \[0 = \F_0 \subset \F_1 \subset \cdots \subset \F_n = \pi^*(\E)\] is a filtration in $\mathcal{C}$, it descents to a filtration of $1$-twsited vector bundles \[0 = \G_0 \subset \G_1 \subset \cdots \subset \G_n = \E\] on $\X'$ such that $\pi^*\G_i = \F_i$. We see that $Q_i := \G_i/\G_{i-1}$ satisfies $\pi^*Q_i \cong \E_i$ and $\delta_{\X}(Q_i) = \delta_{\X}(\E)$. By Proposition \ref{pullback}, $\G_i$ and $Q_i$ are semi-homogeneous for all $i$. 

    We conclude that $\E \cong \bigoplus_{Q_{\sigma} \in \{Q_i\}_i/{\cong}}U_{\X, Q_{\sigma}}$ by Corollary \ref{relation}. 
\end{proof}

\begin{remark}
    We will not use Theorem \ref{summand} until we show Corollary \ref{relation}. In fact, it is not used anywhere in this paper.
\end{remark}

\section{Semi-homogeneous Complexes}
    Having studied the properties of semi-homogeneous vector bundles in $\X$, we can now proceed to analyze the behavior of semi-homogeneous complexes using the structural results proved in Section 3. Many of the tools in this section are inspired by \cite{de2022point}. In this section, we assume $\Char(k) = 0$ and $k = \bar{k}$.

    \begin{definition}
     If $k$ is algebraically closed, an object $E \in \D(\X)^{(1)}$ on $\X$ is called $\textbf{semi-homogeneous}$ if for every $\sigma \in \Aut^0_{\X}(k)$ there exists a 0-twisted line bundle $\LL$ such that \[\sigma^* E \cong E \otimes \LL.\] 
    For general $k$, we call a $E \in \D(\X)^{(1)}$ semi-homogeneous if its base change to an algebraic closure $\bar{k}$ of $k$ is semi-homogeneous.
 \end{definition}

 \begin{remark}
     When $\X \cong \BGm_{,X}$, this defines semi-homogeneous complexes on $X$(c.f. \cite[Definition 3.2]{de2022point}.)
 \end{remark}
 
\subsection{Analyzing Semi-homogeneous Complexes on a Trivialization of $\X$}

Let $\FF$ be a 1-twisted semi-homogeneous complex on $\X$. Take some $\mathcal{H}^i(\FF) \neq 0$, it is semi-homogeneous, hence, by Theorem \ref{filtration}, there is a simple semi-homogeneous vector bundle $\E$ such that $\delta_{\X}(\E) = \delta_{\X}(\mathcal{H}^i(\FF))$.

\begin{proposition}
\label{homogeneous}
    Let $\FF$ be a $1$-twisted semi-homogeneous complex on $\X$, with $\mathcal{H}^{i}(\FF) \neq 0$ for some $i$. If there exists a $1$-twisted simple semi-homogeneous vector bundle $\E$ such that $\delta_{\X}(\E) = \delta_{\X}(\mathcal{H}^i(\FF))$, then there is an isogeny $f : X' \rightarrow X$ and a line bundle $\N$ on $\X' : = \X \times_{X, f} X'$ such that $\X' \rightarrow X'$ is the trivial $\Gm$-gerbe and $f^*\FF \otimes \N^{-1}$ is homogeneous on $\X'$.

    Moreover, $f^* \FF \cong \bigoplus_{\LL \in X'^{\vee}}U_{\LL} \otimes \LL$ where $U_{\LL}$ is a homogeneous complex admitting a filtration whose successive quotients are of the form $\mathcal{O}_{X'}[s]$ for various integers $s$.
\end{proposition}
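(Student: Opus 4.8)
The plan is to pass to a trivialization of the gerbe, reduce to the untwisted statement of de Jong--Olsson, and then descend. First I would observe that $\mathcal{H}^i(\FF)$ is a semi-homogeneous vector bundle on $\X$, and by Proposition \ref{filtration} (applied on a suitable trivialization) there is a simple semi-homogeneous vector bundle $\E$ with $\delta_{\X}(\E) = \delta_{\X}(\mathcal{H}^i(\FF))$ --- this is granted by hypothesis. Apply Theorem \ref{linebundle} to $\E$: there is an isogeny $g \colon X'' \to X$ with $\X'' := \X \times_X X'' \cong \BGm_{,X''}$ and $\E \cong \tilde{g}_* \LL_0$ for a $1$-twisted line bundle $\LL_0$ on $\X''$; in particular $g^*\X$ is a trivial gerbe, so we may replace $\X$ by $\X''$ and assume from the outset that $\X \cong \BGm_{,X}$, i.e. that we are in the untwisted situation, at the cost of the twist being absorbed into a line bundle $\LL_0$ with $\delta(\LL_0) = g^*\delta_{\X}(\E)$.

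Next I would untwist the complex. Over the trivialization, $\FF$ corresponds to an honest complex on $X$ together with a chosen identification of the $\Gm$-action; tensoring by $\LL_0^{-1}$ (a $1$-twisted line bundle, so $\FF \otimes \LL_0^{-1}$ is $0$-twisted) produces a genuine object $\FF' := \FF \otimes \LL_0^{-1} \in \D(X)$. The key point is that $\FF'$ is semi-homogeneous as an object of $\D(X)$ in the sense of de Jong--Olsson \cite[Definition 3.2]{de2022point}: for $\sigma = t_a \in \Aut^0_X(k)$, using that $\FF$ is semi-homogeneous on $\X$ we get $\sigma^*\FF \cong \FF \otimes \M$ for a $0$-twisted line bundle $\M$, and $\sigma^*\LL_0 \cong \LL_0 \otimes \M'$ for some $\M' \in \Pic^0_X$ by \cite[Theorem 1.1]{olsson2025twisted} (since $\LL_0$ is semi-homogeneous by Theorem \ref{linebundle}/Corollary \ref{distinct}), hence $\sigma^*\FF' \cong \FF' \otimes (\M \otimes \M'^{-1})$ with $\M \otimes \M'^{-1} \in \Pic^0_X$. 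Now invoke the untwisted structure theorem: a semi-homogeneous complex on an abelian variety $X$ becomes, after pullback along a suitable isogeny $h \colon X' \to X$, of the form $\bigoplus_{\LL \in X'^\vee} U_\LL \otimes \LL$ with each $U_\LL$ homogeneous and built from a filtration with graded pieces $\mathcal{O}_{X'}[s]$ --- this is exactly the content established in \cite{de2022point} (and rests on \cite[Theorem 6.19]{mukai1978semi} and Proposition \ref{pullback} for the vector-bundle cohomology sheaves, then bootstrapping through the truncation triangles as in Section 3 of \emph{loc. cit.}).

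Finally I would package the two isogenies. Set $f \colon X' \to X$ to be the composite of $h$ with $g$ (after enlarging so that both $\X$ becomes trivial and the de Jong--Olsson conclusion holds), and set $\N := f^*\LL_0$, a line bundle on the trivial gerbe $\X' \cong \BGm_{,X'}$, viewed as a $1$-twisted line bundle. Then $f^*\FF \otimes \N^{-1} = h^*\FF'$ is homogeneous on $X'$ by construction, giving the first assertion, and the displayed decomposition $f^*\FF \cong \bigoplus_{\LL} U_\LL \otimes \LL$ follows by tensoring the de Jong--Olsson decomposition of $h^*\FF'$ back by $\N$ and reindexing (absorbing $\N$ into the $U_\LL$'s does not change the form of the graded pieces, which remain $\mathcal{O}_{X'}[s]$). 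The main obstacle I anticipate is the verification that "semi-homogeneous on $\X$" transfers cleanly to "semi-homogeneous on $X'$ after untwisting" --- one must be careful that the line bundle witnessing $\sigma^*\FF \cong \FF \otimes \LL$ can be taken $0$-twisted and in $\Pic^0$, and that the choice of descent/trivialization data is compatible with the $\Aut^0$-action; this is where \cite[Theorem 1.1]{olsson2025twisted} and the semi-homogeneity of $\LL_0$ (Corollary \ref{distinct}) do the real work. The rest is formal manipulation of truncation triangles and Krull--Schmidt, already carried out in the untwisted case.
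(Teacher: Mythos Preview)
Your proposal is correct but takes a different route from the paper's. You trivialize via the isogeny $g$ from Theorem \ref{linebundle}, untwist by $\LL_0^{-1}$ to obtain an honest semi-homogeneous complex on $X''$, and then invoke the untwisted structure theorem of \cite{de2022point} as a black box, at the cost of a further isogeny $h$ and a repackaged $\N$. The paper instead argues directly with $\Phi^{00}$: after pulling back along the single isogeny $f$ from Theorem \ref{linebundle} it shows the chain of equalities and inclusions $\Phi^{00}_{\X'}(f^*\FF) = \Phi^{00}_{\X'}(f^*\mathcal{H}^i(\FF)) = \Phi^{00}_{\X'}(f^*\E) \subset \Phi^{00}_{\X'}(\N)$ using \cite[Proposition 5.1, Lemma 6.8, Lemma 3.11]{mukai1978semi}, which forces $t_a^*(f^*\FF) \cong f^*\FF \otimes \lambda_\N(a)$ for all $a$ and hence that $f^*\FF \otimes \N^{-1}$ is \emph{homogeneous} outright; the decomposition then follows from \cite[2.2]{de2022point} applied to a homogeneous (not merely semi-homogeneous) complex. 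The paper's argument buys more: it shows that the very pair $(f,\N)$ with $\E \cong f_*\N$ already works, and this specific relationship is used downstream in Corollary \ref{distinct}, Proposition \ref{inject}, and Proposition \ref{pushforward}, where the free action of $\ker f$ on the summands of $f^*\E$ is essential. Your modular reduction is conceptually clean but would not deliver that information without further work. One small correction: your claim that the witnessing line bundle $\M \otimes \M'^{-1}$ lies in $\Pic^0_X$ is neither justified nor needed---semi-homogeneity asks only for a line bundle---and your $\N$ should be $h^*\LL_0$ tensored with whatever further line bundle the untwisted structure theorem produces, not simply $f^*\LL_0$.
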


\begin{remark}
\label{trivial}
    Since $\X'$ is trivial, tensoring by $\chi^{-1}$ (where $\chi$ is the weight 1 for the inertia action) gives an equivalence between the 1-twisted coherent sheaves and the 0-twisted ones. So, from now on, we view $1$-twisted sheaves on $\X'$ as sheaves on $X'$ (that is, for simplicity, we drop $- \otimes \chi^{-1}$.) This also means that the results in \cite{mukai1978semi} and \cite{de2022point} apply.
\end{remark}

\begin{proof}
    Let $f: \X' \rightarrow \X$ be the base change of the isogeny we get from Theorem \ref{pushforward} such that $\E \cong f_* \LL$ for some line bundle $\N$ on $\X'$.

    Similarly to Proposition \ref{pullback}, one can show that $f^* \FF$ is semi-homogeneous, which implies that $\dim(\Phi^{00}_{\X'}(f^* \FF)) \geq \dim(X)$ by Proposition \ref{dimension}.

    Since $\Phi^{00}_{\X'}(f^*{\FF}) \subset \Phi^{00}_{\X'}(f^*{(\mathcal{H}^s(\FF))})$ is a closed subset, and $f^*{(\mathcal{H}^s{(\FF)})}$ is semi-homogeneous, by \cite[Proposition 5.1]{mukai1978semi}, we have $\Phi^{00}_{\X'}(f^*{\FF}) = \Phi^{00}_{\X'}(f^* {(\mathcal{H}^s{(\FF)})})$.

    By \cite[Lemma 6.8]{mukai1978semi}, we have $\Phi^{00}_{\X'}(f^*\mathcal{H}^i(\FF)) = \Phi^{00}_{\X'}(f^*\E)$, which implies that $\Phi^{00}_{\X'}(f^*\FF) = \Phi^{00}_{\X'}(f^*\E)$.

    Let $K = f^* \FF$. By Theorem \ref{linebundle}, we know $f^*\E \cong \N \oplus \N_1 \oplus \cdots \oplus \N_{l-1}$ where $l$ is the order of $f$ and all summands are distinct line bundles. By \cite[Lemma 3.11]{mukai1978semi}, $\Phi_{\X'}^{00}(f^*{\E}) \subset \Phi^{00}_{\X'}({\N})$.

    We then conclude by the following lemma.

\begin{lemma}
    $K\otimes \N^{-1}$ is homogeneous, hence $K \otimes \N^{-1} \cong \bigoplus_{\LL \in X'^t}U_{\LL} \otimes \LL$ where $U_{\LL}$ is a homogeneous complex admitting a filtration whose successive quotients are of the form $\mathcal{O}_{X'}[s]$ for various integers $s$.
\end{lemma}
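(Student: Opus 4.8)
The plan is to determine $\Phi^{00}_{\X'}(K)$ completely and then read off the translation--invariance of $K\otimes\N^{-1}$ from it. From the discussion preceding the lemma we already have $\Phi^{00}_{\X'}(K)=\Phi^{00}_{\X'}(f^*\E)\subseteq\Phi^{00}_{\X'}(\N)$, and $\dim\Phi^{00}_{\X'}(K)\ge d$ because $f^*\FF$ is semi-homogeneous. Now, since $\X'$ is the trivial $\Gm$-gerbe, a $k$-point of $\Aut^0_{\X'}$ is a pair $(a,\LL)$ with $a\in X'(k)$ and $\LL\in X'^{\vee}(k)$, and for the line bundle $\N$ on $X'$ one has $(a,\LL)\in\Phi^0_{\X'}(\N)$ iff $t_a^*\N\cong\N\otimes\LL$, i.e. iff $\LL=t_a^*\N\otimes\N^{-1}=\lambda_{\N}(a)$. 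Hence $\Phi^0_{\X'}(\N)$ is the graph of $\lambda_{\N}\colon X'\to X'^{\vee}$; being the image of the abelian variety $X'$ under a graph embedding it is connected, reduced, and of dimension $d$, so $\Phi^{00}_{\X'}(\N)=\Phi^0_{\X'}(\N)$. Comparing dimensions (in $\Char k=0$ these subgroups are smooth), the inclusion $\Phi^{00}_{\X'}(K)\subseteq\Phi^{00}_{\X'}(\N)$ is forced to be an equality, so $\Phi^{00}_{\X'}(K)=\{(a,\lambda_{\N}(a)):a\in X'\}$.

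With this in hand, for every $a\in X'(k)$ the pair $(a,\,t_a^*\N\otimes\N^{-1})$ lies in $\Phi^{00}_{\X'}(K)$, i.e. $t_a^*K\cong K\otimes(t_a^*\N\otimes\N^{-1})$. Twisting both sides by $t_a^*\N^{-1}$ gives $t_a^*(K\otimes\N^{-1})\cong K\otimes\N^{-1}$ for all $a\in X'(k)$; viewing the $1$-twisted sheaf on the trivial gerbe $\X'$ as an object of $\D(X')$ as in Remark \ref{trivial}, this is exactly the statement that $K\otimes\N^{-1}$ is homogeneous.

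For the ``hence'' I would appeal directly to the structure theory of homogeneous complexes on abelian varieties: by de Jong--Olsson \cite{de2022point} (extending Mukai's decomposition of homogeneous vector bundles \cite{mukai1978semi} to the derived category), any homogeneous object of $\D(X')$ splits as a finite direct sum $\bigoplus_{\LL\in X'^{\vee}}U_{\LL}\otimes\LL$ in which each $U_{\LL}$ is a homogeneous complex admitting a finite filtration whose graded pieces are of the form $\mathcal{O}_{X'}[s]$ for integers $s$. Applying this to $K\otimes\N^{-1}$ yields the asserted decomposition.

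The main (and essentially only) point requiring care is the first step: one must correctly describe $\Aut^0_{\X'}$ and $\Phi^0_{\X'}(\N)$ for the trivial gerbe and then argue that the containment of the identity components together with the bound $\dim\ge d$ upgrades to an equality of \emph{reduced} group schemes, not merely of $k$-points --- which works precisely because $\Phi^{00}_{\X'}(\N)$ is already a smooth irreducible subvariety of the correct dimension ($\Char k=0$). Everything past that is the formal twisting identity and the invocation of \cite{de2022point}.
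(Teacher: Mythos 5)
Your proposal is correct and follows essentially the same route as the paper: the containment $\Phi^{00}_{\X'}(K)=\Phi^{00}_{\X'}(f^*\E)\subseteq\Phi^{00}_{\X'}(\N)$ together with the dimension bound forces $t_a^*K\cong K\otimes\lambda_{\N}(a)$ for all $a$, the twisting identity then gives homogeneity of $K\otimes\N^{-1}$, and the decomposition is \cite[2.2]{de2022point}. You merely make explicit (correctly) the step the paper compresses into ``by the discussion above,'' namely that $\Phi^{0}_{\X'}(\N)$ is the graph of $\lambda_{\N}$ and that the inclusion of identity components is an equality of reduced subgroups.
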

\begin{proof}
     Let $\lambda_{\N} : X' \rightarrow X'^{\vee}$ be $\lambda_{\N}(a)= t_a^* \N \otimes \N^{-1}$. By the discussion above, we know that $t_a^*K  \cong K \otimes \lambda_{\N}(a)$ for every $a \in X'(k)$.
    Given any $a \in X'$, we have \[t_a^*(K \otimes \N^{-1}) \cong K \otimes \lambda_{\N}(a) \otimes t_a^* \N^{-1} \cong K \otimes t_a^* \N \otimes \N^{-1} \otimes t_a^* \N^{-1} \cong K \otimes \N^{-1}\]
    Hence $K \otimes \N^{-1}$ is homogeneous. By \cite[2.2]{de2022point}, $K \otimes \N^{-1} \cong \bigoplus_{\LL \in X'^{\vee}}U_{\LL} \otimes \LL$.
\end{proof}
\end{proof}


For any $\GG' \in \D(\X')^{(1)}$, $a \in \ker{f}(k)$, write $a \cdot \GG' := t_a^* \GG' \otimes \delta_a$ where $\delta_a$ as in \cite[8.11]{olsson2025twisted}. And by \cite[8.11]{olsson2025twisted}, when $\GG' \cong f^* \GG$ for some $\GG \in \D(\X)^{(1)}$, this is the descent action.

For the $f$ and $\E$ as in the previous proposition, by Corollary \ref{distinct} this action induces a free action of $\ker{f}(k)$ on $\{\N, \N_1, \cdots, \N_{l-1} \}$ which sends $\N$ to some $\N_i$, that is, we have 

\begin{proposition}
\label{inject}
$a \cdot \N \not \cong \N$ for all nontrivial $a \in \ker(f)(k)$.
\end{proposition}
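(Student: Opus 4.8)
The plan is to show Proposition \ref{inject} as an immediate consequence of Corollary \ref{distinct}, modulo a bookkeeping check that the setup of Proposition \ref{homogeneous} really does fit the hypotheses of Corollary \ref{distinct}. Recall that in Proposition \ref{homogeneous} we chose $f \colon X' \to X$ and a line bundle $\N$ on $\X'$ so that $\E \cong f_* \N$ (the statement writes $f_*\LL$, but in the proof of Proposition \ref{homogeneous} the line bundle realizing $\E$ as a pushforward is called $\N$; I would first normalize notation so that $\E \cong f_*\N$ throughout). This is precisely the output of Theorem \ref{linebundle}, and Corollary \ref{distinct} then applies verbatim to $\E$, $f$, and $\N$: it says exactly that $\lambda_{\N}\colon X' \to X'^\vee$, $a \mapsto t_a^*\N \otimes \N^{-1}$, is injective on $\ker f$, equivalently that $t_a^*\N \otimes \delta_a \not\cong \N$ for every nontrivial $a \in \ker(f)(k)$.

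The only genuine content to spell out is that the twisted action $a \cdot \GG' = t_a^*\GG' \otimes \delta_a$ introduced just before the proposition is the same $\delta_a$ appearing in Corollary \ref{distinct}: both are the line bundle from \cite[8.11]{olsson2025twisted} measuring the failure of the descent action of $\ker f$ along $\tilde\pi \colon \Y \to \X$ (here with $Y = X'$) to be given by naive translation. Since Corollary \ref{distinct} is stated in exactly these terms — $t_a^*\LL \otimes \delta_a \not\cong \LL$ for all nontrivial $a \in \ker(f)(k)$, with $\LL$ the line bundle realizing $\E$ as $f_*\LL$ — I would simply invoke it with $\LL = \N$. Thus $a \cdot \N = t_a^*\N \otimes \delta_a \not\cong \N$ for all nontrivial $a \in \ker(f)(k)$, which is the assertion.

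Concretely the proof is one line: \emph{This is Corollary \ref{distinct} applied to the isogeny $f$ and line bundle $\N$ of Proposition \ref{homogeneous}, since $\E \cong f_*\N$ and the action $a \cdot \N$ is by definition $t_a^*\N \otimes \delta_a$.} I expect the only friction to be the notational one flagged above (the discrepancy between $\LL$ and $\N$ in the statement versus proof of Proposition \ref{homogeneous}), so before writing the proof I would make sure the line bundle names are consistent, after which nothing further is needed.
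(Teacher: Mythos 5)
Your proposal is correct and is exactly the paper's argument: the paper states Proposition \ref{inject} as an immediate consequence of Corollary \ref{distinct} applied to the $f$ and $\N$ produced by Theorem \ref{linebundle} in the proof of Proposition \ref{homogeneous}, with $a\cdot\N = t_a^*\N\otimes\delta_a$ being the same twisted descent action from \cite[8.11]{olsson2025twisted}. Your observation about the $\LL$/$\N$ notational slip in Proposition \ref{homogeneous} is also accurate and worth fixing, but nothing further is needed.
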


Let $G := X'^\vee(k)/\{a \cdot \N \otimes \N^{-1}\}_{a \in \ker{(f)}(k)}$, let $\tilde{\Sigma}(\FF) \subset X'^t$ be the line bundles $\LL$ such that $U_{\LL} \neq 0$ ($U_{\LL}$ as in the decomposition in Proposition \ref{homogeneous},) then let $\Sigma(\FF)$ be the image of $\tilde{\Sigma}(\FF)$ in $G(k)$. 

\begin{lemma} 
$\tilde{\Sigma}(\FF)$ is the preimage of $\Sigma(\FF)$ under $X'(k) \rightarrow G(k)$.
\end{lemma}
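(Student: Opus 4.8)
The plan is to show the two inclusions separately, the forward one being essentially by construction and the reverse one requiring the real work. Recall $\tilde{\Sigma}(\FF)$ consists of those $\LL \in X'^\vee(k)$ with $U_\LL \neq 0$ in the decomposition $f^*\FF \otimes \N^{-1} \cong \bigoplus_{\LL} U_\LL \otimes \LL$ of Proposition \ref{homogeneous}, and $G = X'^\vee(k)/\{a \cdot \N \otimes \N^{-1}\}_{a \in \ker f(k)}$. By Proposition \ref{inject} the subgroup $\{a \cdot \N \otimes \N^{-1}\}_{a \in \ker f(k)}$ has order exactly $l = \ord(f)$, since $a \mapsto a \cdot \N \otimes \N^{-1}$ is a homomorphism $\ker f(k) \to X'^\vee(k)$ that is injective (if $a \cdot \N \cong \N$ then $a = 0$). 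The containment of $\tilde{\Sigma}(\FF)$ in the preimage of $\Sigma(\FF)$ is immediate from the definition of $\Sigma(\FF)$ as the image of $\tilde{\Sigma}(\FF)$. So the content is the reverse inclusion: if $\LL' \equiv \LL \pmod{\{a \cdot \N \otimes \N^{-1}\}}$ and $U_\LL \neq 0$, then $U_{\LL'} \neq 0$.

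For the reverse inclusion, the key point is that $f^*\FF = K$ carries its canonical descent datum to $\X$, i.e. $a \cdot K \cong K$ for all $a \in \ker f(k)$ (this is the descent action, using $K \cong f^*\FF$ and \cite[8.11]{olsson2025twisted}). First I would compute how the $\ker f(k)$-action permutes the pieces $U_\LL \otimes \LL$. Since $K \cong (K \otimes \N^{-1}) \otimes \N \cong \bigoplus_\LL U_\LL \otimes \LL \otimes \N$, applying $a \cdot (-)$ gives
\[
a \cdot K \cong \bigoplus_\LL t_a^*(U_\LL) \otimes t_a^*\LL \otimes t_a^*\N \otimes \delta_a.
\]
Now each $U_\LL$ is homogeneous (admits a filtration with quotients $\mathcal{O}_{X'}[s]$), so $t_a^* U_\LL \cong U_\LL$; and $t_a^*\LL \cong \LL$ for $\LL \in X'^\vee$. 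Writing $t_a^*\N \otimes \delta_a = \N \otimes (a \cdot \N \otimes \N^{-1})$, we get
\[
a \cdot K \cong \bigoplus_\LL U_\LL \otimes \LL \otimes (a \cdot \N \otimes \N^{-1}) \otimes \N \cong \Bigl(\bigoplus_\LL U_{\LL \otimes (a\cdot\N\otimes\N^{-1})^{-1}} \otimes \LL\Bigr) \otimes \N,
\]
after reindexing. Comparing with $K \cong \bigoplus_\LL U_\LL \otimes \LL \otimes \N$ and using uniqueness of the homogeneous decomposition (\cite[2.2]{de2022point}, where the $U_\LL$ are the components supported at $\LL$, hence the decomposition is canonical up to isomorphism of each $U_\LL$), I conclude $U_\LL \cong U_{\LL \otimes (a \cdot \N \otimes \N^{-1})^{-1}}$ for every $\LL$ and every $a \in \ker f(k)$. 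In particular $U_\LL \neq 0 \iff U_{\LL \otimes (a\cdot\N\otimes\N^{-1})} \neq 0$, which says exactly that $\tilde{\Sigma}(\FF)$ is stable under translation by the subgroup $\{a \cdot \N \otimes \N^{-1}\}_{a \in \ker f(k)}$; combined with the forward inclusion this gives that $\tilde{\Sigma}(\FF)$ is precisely the preimage of $\Sigma(\FF)$.

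The main obstacle I anticipate is making precise the "uniqueness of the homogeneous decomposition" step: one needs that if $\bigoplus_\LL U_\LL \otimes \LL \cong \bigoplus_\LL U'_\LL \otimes \LL$ with all $U_\LL, U'_\LL$ homogeneous complexes supported (in the sense of the Fourier–Mukai transform / \cite[2.2]{de2022point}) at the single point $\LL \in X'^\vee$, then $U_\LL \cong U'_\LL$ for each $\LL$. This should follow from applying the Fourier–Mukai equivalence $\D(X') \to \D(X'^\vee)$, under which a homogeneous complex $U_\LL \otimes \LL$ goes to a complex of sheaves set-theoretically supported at the point $[\LL] \in X'^\vee$, so the decomposition becomes a decomposition by (disjoint) support and is therefore unique; one then reads off the isomorphisms of the $U_\LL$. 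I would cite \cite[2.2]{de2022point} for this and keep the verification brief. A minor additional point to check is that the reindexing $\LL \mapsto \LL \otimes (a \cdot \N \otimes \N^{-1})^{-1}$ is a bijection of $X'^\vee(k)$, which is clear since it is translation by a fixed element.
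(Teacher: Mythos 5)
Your proposal is correct and follows essentially the same route as the paper: both rest on the descent identity $a\cdot f^*\FF\cong f^*\FF$ together with the computation $a\cdot(U_{\LL}\otimes\LL\otimes\N)\cong U_{\LL}\otimes\LL\otimes(a\cdot\N\otimes\N^{-1})\otimes\N$, concluding that $\tilde{\Sigma}(\FF)$ is stable under translation by the subgroup being quotiented out. The only (cosmetic) difference is that you apply the action to the whole direct sum and invoke uniqueness of the homogeneous decomposition, whereas the paper embeds a single summand $a\cdot(U_{\LL}\otimes\LL\otimes\N)$ into the sum and reads off $U_{\LL\otimes(a\cdot\N\otimes\N^{-1})}\neq 0$; your explicit justification of the uniqueness step via the support decomposition under Fourier--Mukai is a welcome addition rather than a divergence.
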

\begin{proof}
    By descent, $a \cdot f^*\E \cong f^*\E$, so \[a \cdot (\bigoplus_{\LL \in X'^{\vee}}U_{\LL} \otimes \LL \otimes \N) \cong \bigoplus_{\LL \in X'^{\vee}} U_{\LL} \otimes \LL \otimes \N\]
    so \[a \cdot (U_{\LL} \otimes \LL \otimes \N) \xhookrightarrow{} a \cdot (\bigoplus_{\LL \in X'^{\vee}}U_{\LL} \otimes \LL \otimes \N) \cong \bigoplus_{\LL \in X'^{\vee}}U_{\LL} \otimes \LL \otimes \N\]
    and we know \[a \cdot (U_{\LL} \otimes \LL \otimes \N) \cong t_a^*(U_{\LL} \otimes \LL \otimes \N) \otimes \delta_a \cong U_{\LL} \otimes \LL \otimes (t_a^* \N \otimes \N^{-1} \otimes \delta_a) \otimes \N \cong U_{\LL} \otimes \LL \otimes (a \cdot \N \otimes \N^{-1}) \otimes \N.\]
    So if $U_{\LL} \neq 0$ then $U_{\LL \otimes (a \cdot \N \otimes \N^{-1})} \neq 0$, which means $\tilde{\Sigma}(\FF)$ is stable under the action of the subgroup we quotient out. The statement then follows.
\end{proof}

\begin{proposition}
\label{pushforward}
For the same $\FF$ and $f$ as in Proposition \ref{homogeneous}, $\FF \cong f_*(H \otimes \N)$ for some 0-twisted homogeneous complex $H$ on $\X'$.
\end{proposition}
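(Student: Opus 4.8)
The plan is to pull $\FF$ back along $f$, where Proposition~\ref{homogeneous} already pins down the shape of $f^{*}\FF$, and then descend a well-chosen direct summand back down. Throughout write $\Gamma:=\ker f(k)$, which is finite (and étale, as $\Char k=0$), so that finite étale descent along $f\colon\X'\to\X$ presents $\FF$ by the pair $(K,\{\gamma_a\}_{a\in\Gamma})$ consisting of $K:=f^{*}\FF$ together with its canonical descent datum $\gamma_a\colon a\cdot K\xrightarrow{\ \sim\ }K$, satisfying $\gamma_{ab}=\gamma_a\circ(a\cdot\gamma_b)$; this is the derived version of the category $\mathcal C$ of the previous subsection. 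By Proposition~\ref{homogeneous} together with the lemma identifying $\tilde\Sigma(\FF)$ with the preimage of $\Sigma(\FF)$, we may write
\[
K\ \cong\ \bigoplus_{\nu\in\tilde\Sigma(\FF)} U_{\nu}\otimes\nu\otimes\N ,
\]
a finite sum in which each $U_{\nu}$ is a homogeneous complex and $\Gamma$ acts freely on the finite set $\tilde\Sigma(\FF)$ by $a\ast\nu:=\nu\otimes(a\cdot\N\otimes\N^{-1})$ (freeness being Proposition~\ref{inject}); in particular $\tilde\Sigma(\FF)$ breaks into $\Gamma$-orbits, each of size $l=|\Gamma|$.

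The first real step is to see that this decomposition is stable under the descent datum. For $\mu\neq\mu'$ one has
\[
\Hom_{\D(X')}\!\big(U_{\mu'}\otimes\mu',\,U_{\mu}\otimes\mu\big)\ \cong\ \Hom_{\D(X')}\!\big(U_{\mu'},\,U_{\mu}\otimes(\mu\otimes\mu'^{-1})\big)\ =\ 0 ,
\]
since $U_{\mu'}$ and $U_{\mu}$ are obtained from $\mathcal O_{X'}[\,\ast\,]$ by repeated extensions while $\mu\otimes\mu'^{-1}$ is a nontrivial degree-zero line bundle on the abelian variety $X'$, hence acyclic in every degree; alternatively one cites the canonicity of the isotypic decomposition of a homogeneous complex in \cite[2.2]{de2022point}. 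It follows that each $\gamma_a$ sends $a\cdot(U_{\nu}\otimes\nu\otimes\N)$ into $U_{a\ast\nu}\otimes(a\ast\nu)\otimes\N$ (and in particular that $U_{\nu}\cong U_{a\ast\nu}$). Grouping the summands of $K$ by $\Gamma$-orbit therefore refines to a direct sum decomposition of $(K,\{\gamma_a\})$, which descends to $\FF\cong\bigoplus_{[\mu]}\FF_{[\mu]}$, where $f^{*}\FF_{[\mu]}\cong\bigoplus_{\nu\in[\mu]}U_{\nu}\otimes\nu\otimes\N$ and $\Gamma$ permutes these $l$ summands freely and transitively.

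Next I would identify each $\FF_{[\mu]}$. Fix a representative $\mu_0\in[\mu]$ and set $\G_{[\mu]}:=f_{*}(U_{\mu_0}\otimes\mu_0\otimes\N)$. Using $f^{*}f_{*}\cong\bigoplus_{a\in\Gamma}a\cdot(-)$ from the proof of Proposition~\ref{pullback}, together with $t_a^{*}U_{\mu_0}\cong U_{\mu_0}$, $t_a^{*}\mu_0\cong\mu_0$ and $U_{\mu_0}\cong U_{a\ast\mu_0}$, one computes that $f^{*}\G_{[\mu]}$ is isomorphic to $f^{*}\FF_{[\mu]}$ as a complex, and that both descent data then appear as ``regular permutation'' data on $\bigoplus_{c\in\Gamma}V_c$ (with $\gamma_a$ carrying the $V_c$-summand to the $V_{ac}$-summand). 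A Shapiro-type cocycle computation shows any two such data are isomorphic in $\mathcal C$: put $\psi_e=\mathrm{id}$, define the comparison map on the $c$-summand by $\psi_c:=g'_{c,e}\circ g_{c,e}^{-1}$, where $g_{a,c}$ and $g'_{a,c}$ are the components of the two data, and verify $\psi_{ac}\circ g_{a,c}=g'_{a,c}\circ(a\cdot\psi_c)$ from the cocycle relation $g_{ac,e}=g_{a,c}\circ(a\cdot g_{c,e})$. Hence $\FF_{[\mu]}\cong\G_{[\mu]}=f_{*}(U_{\mu_0}\otimes\mu_0\otimes\N)$.

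Finally, set $H:=\bigoplus_{[\mu]}U_{\mu_0}\otimes\mu_0$, a finite direct sum (as $\tilde\Sigma(\FF)$ is finite) of homogeneous complexes on the trivial gerbe $\X'$ — each $U_{\mu_0}$ is homogeneous by Proposition~\ref{homogeneous}, tensoring by $\mu_0\in\Pic^{0}(X')$ preserves homogeneity, and so does a finite direct sum — so $H$ is a $0$-twisted homogeneous complex. Since $f_{*}$ commutes with finite direct sums, $\FF\cong\bigoplus_{[\mu]}f_{*}(U_{\mu_0}\otimes\mu_0\otimes\N)\cong f_{*}(H\otimes\N)$, as desired. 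I expect the main obstacle to be the descent-datum bookkeeping in the middle two steps — checking that the isotypic decomposition of $K$ is $\Gamma$-stable and that each orbit-block is genuinely ``induced'' from a single summand — rather than any new geometric input; the geometric content is entirely in Propositions~\ref{homogeneous} and \ref{inject} and the cohomology vanishing on $X'$.
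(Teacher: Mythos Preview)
Your argument is correct, but the paper takes a much shorter path that avoids the orbit-by-orbit descent bookkeeping entirely. The paper simply defines $H=\bigoplus_{\sigma\in\Sigma(\FF)}U_{\LL_\sigma}\otimes\LL_\sigma$ (one summand per $\Gamma$-orbit), observes that $H\otimes\N$ sits inside $f^*\FF$ as a direct summand, and then uses the adjunction $\Hom(G,f^*\FF)\cong\Hom(f_*G,\FF)$ (valid since $f$ is finite \'etale, so $f^!=f^*$) to produce a single map $f_*(H\otimes\N)\to\FF$. That map is an isomorphism because it becomes one after applying the faithfully flat $f^*$: on the source one gets $f^*f_*(H\otimes\N)\cong\bigoplus_{a\in\Gamma}a\cdot(H\otimes\N)$, and the free $\Gamma$-action on $\tilde\Sigma(\FF)$ together with the lemma identifying $\tilde\Sigma(\FF)$ as the full preimage of $\Sigma(\FF)$ forces this to hit every summand of $f^*\FF$ exactly once.

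So the geometric content is the same --- both proofs ultimately rest on Proposition~\ref{inject} and the preimage lemma --- but the paper packages the comparison of descent data into a single adjunction-plus-faithful-flatness step, whereas you unpack it by hand via the Shapiro cocycle computation. Your route has the virtue of making explicit \emph{why} the adjoint map is an isomorphism after pullback (the ``regular permutation'' structure of both descent data is exactly what the paper's last sentence is silently invoking), at the cost of being several times longer.
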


\begin{proof}
    Let $H := \oplus_{\sigma \in \Sigma} (U_{\LL_{\sigma}} \otimes U_{\LL_{\sigma}})$, we see that we have $H \otimes \N \xhookrightarrow{} f^* \FF$. So we have nonzero map $f_* H \otimes \N \xhookrightarrow{} \FF$, which is an isomorphism since it is an isomorphism after applying $f^*$. 
\end{proof}

\begin{corollary}
\label{compare}
    For some simple $1$-twisted semi-homogeneous $\E' \neq \E$ on $\X$ with $\delta_{\X}(\E) = \delta_{\X}(\E')$, $\Sigma(\E) \cap \Sigma(\E') = \varnothing$.
\end{corollary}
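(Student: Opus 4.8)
The plan is to compute $\Sigma(\E)$ explicitly, reduce the asserted disjointness to the vanishing $\Hom_\X(\E,\E')=0$, and deduce that vanishing from stability of simple semi-homogeneous bundles. First I would pin down $\Sigma(\E)$. By the twisted descent recalled after Proposition~\ref{pullback}, $f^*\E\cong f^*f_*\N\cong\bigoplus_{a\in\ker f(k)}a\cdot\N$, and by Corollary~\ref{distinct} these summands are pairwise non-isomorphic line bundles. Matching this against the decomposition $f^*\E\cong\bigoplus_{\LL}U_{\LL}\otimes\LL\otimes\N$ of Proposition~\ref{homogeneous} forces $U_{\LL}\cong\mathcal O_{X'}$ exactly for $\LL$ in the subgroup $H:=\{\,a\cdot\N\otimes\N^{-1}\mid a\in\ker f(k)\,\}$ and $U_{\LL}=0$ otherwise. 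Hence $\tilde\Sigma(\E)=H=\ker(X'^{\vee}(k)\to G)$, so $\Sigma(\E)=\{e_G\}$.

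Next I would reduce to a $\Hom$ computation. By Proposition~\ref{pullback} the bundle $f^*\E'$ is semi-homogeneous, and $\delta_\X(f^*\E')=f^*\delta_\X(\E')=f^*\delta_\X(\E)=\delta_\X(f^*\E)$, so $\Phi^{00}_{\X'}(f^*\E')=\Phi^{00}_{\X'}(f^*\E)\subset\Phi^{00}_{\X'}(\N)$ by \cite[Lemma~6.8]{mukai1978semi}. Running the argument of the lemma inside the proof of Proposition~\ref{homogeneous} with this $f$ and $\N$ shows $f^*\E'\otimes\N^{-1}$ is homogeneous, so $f^*\E'\cong\bigoplus_{\LL}U'_{\LL}\otimes\LL\otimes\N$ with each $U'_{\LL}$ a homogeneous vector bundle filtered by copies of $\mathcal O_{X'}$, and $\tilde\Sigma(\E')=\{\LL\mid U'_{\LL}\neq0\}$. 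By the lemma preceding Proposition~\ref{pushforward}, $\tilde\Sigma(\E')$ is a union of cosets of $H$, so — since $\Sigma(\E)=\{e_G\}$ — we get $\Sigma(\E)\cap\Sigma(\E')\neq\varnothing$ iff $\mathcal O_{X'}\in\tilde\Sigma(\E')$, i.e. iff $U'_{\mathcal O_{X'}}\neq0$. On the other hand, using the adjunction $f_*\dashv f^*$ (valid as $f$ is finite étale), the isomorphism $\E\cong f_*\N$, and the fact that $H^0(X',U'_{\LL}\otimes\LL)=0$ for every nontrivial $\LL\in\Pic^0(X')$ (there $U'_{\LL}\otimes\LL$ is an iterated extension of copies of the nontrivial degree-zero line bundle $\LL$, which has no global sections), one obtains
\[
\Hom_\X(\E,\E')\;\cong\;\Hom_{X'}(\N,f^*\E')\;\cong\;\bigoplus_{\LL}H^0\!\bigl(X',U'_{\LL}\otimes\LL\bigr)\;\cong\;H^0\!\bigl(X',U'_{\mathcal O_{X'}}\bigr),
\]
which is nonzero precisely when $U'_{\mathcal O_{X'}}\neq0$ (that bundle being unipotent). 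Hence $\Sigma(\E)\cap\Sigma(\E')=\varnothing$ is equivalent to $\Hom_\X(\E,\E')=0$.

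It then remains to show $\Hom_\X(\E,\E')=0$. The bundles $\E,\E'$ are simple and semi-homogeneous with $\delta_\X(\E)=\delta_\X(\E')$; by Mukai's theory a simple semi-homogeneous vector bundle is stable (alternatively, $f^*\E$ and $f^*\E'$ are direct sums of line bundles of the common first Chern class $f^*\delta_\X(\E)$ in $NS(X')$, hence polystable, and polystability descends along the finite étale morphism $f$), and fixing $\delta_\X$ rigidly determines both the rank and the Chern character — hence the reduced Hilbert polynomial — of such a bundle. Thus $\E$ and $\E'$ are stable objects with a common reduced Hilbert polynomial in the abelian category of semistable sheaves with that polynomial, so by Schur's lemma any nonzero morphism between them would be an isomorphism; as $\E\not\cong\E'$ this gives $\Hom_\X(\E,\E')=0$, and therefore $\Sigma(\E)\cap\Sigma(\E')=\varnothing$. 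The main obstacle is precisely this last step — extracting from \cite{mukai1978semi} that simple semi-homogeneous bundles are stable and that their numerical invariants are determined by $\delta_\X$; everything else is bookkeeping with the decompositions. (One may bypass stability: Proposition~\ref{pushforward} applied to $\E'$ gives $\E'\cong f_*(H'\otimes\N)$ with $H'$ homogeneous, rank considerations force $H'$ to be a line bundle of trivial $NS$-class, so $\Sigma(\E')$ is a single coset of $H$; were it to meet $\Sigma(\E)=\{e_G\}$ we would get $\E'\cong f_*(a\cdot\N)\cong\E$ by Theorem~\ref{linebundle}, a contradiction.)
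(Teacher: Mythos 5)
Your reduction is sound and your closing parenthetical is, in essence, the paper's proof; but the route you present as primary has a genuine gap. The computation of $\Sigma(\E)$ (matching $f^*\E\cong\bigoplus_{a\in\ker f(k)}a\cdot\N$ against the decomposition of Proposition \ref{homogeneous} and invoking Corollary \ref{distinct}) is correct and is exactly how the paper sees that $\Sigma(\E)$ is the single class of $\N$; the equivalence $\Sigma(\E)\cap\Sigma(\E')\neq\varnothing\iff\Hom_\X(\E,\E')\neq 0$ is also correct (it is the relevant instance of Lemma \ref{Ext}, which the paper only proves afterwards, so deriving it by hand as you do is legitimate). The problem is the final vanishing $\Hom_\X(\E,\E')=0$. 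You deduce it from ``simple semi-homogeneous implies stable, and $\delta_\X$ determines the reduced Hilbert polynomial.'' In the untwisted case this is in Mukai, but $\E$ and $\E'$ are $1$-twisted sheaves on a nontrivial gerbe: the paper never defines (semi)stability for such objects (only for sheaves on a trivialized gerbe, via $-\otimes\chi^{-1}$), and these facts do not transfer for free. Your fallback justification also fails as stated: $f^*\E'$ is \emph{not} known to split into line bundles under this particular $f$ (Theorem \ref{linebundle} produces an isogeny adapted to $\E'$, a priori a different one; under $f$ you only know $f^*\E'\otimes\N^{-1}$ is homogeneous, i.e.\ an iterated extension of line bundles), so polystability of $f^*\E'$ is unavailable, and ``polystability descends along finite \'etale morphisms'' is not something you can invoke without proof. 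As written, the stability route does not close.

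Your parenthetical alternative, however, is correct and is the paper's actual argument, except that the decisive input is simplicity of $\E'$, not ``rank considerations'' (rank alone forces nothing here). Concretely: if $\Sigma(\E')$ met $\Sigma(\E)=\{e_G\}$, then by Proposition \ref{pushforward} and the singleton structure of $\Sigma(\E')$ forced by simplicity, $\E'$ would acquire a direct summand $f_*(U_{\mathcal O_{X'}}\otimes\N)$ filtered by copies of $\E=f_*\N$. Simplicity of $\E'$ excludes a proper direct summand, and if the summand is all of $\E'$ with filtration length $\geq 2$, the composite $\E'\twoheadrightarrow\E\hookrightarrow\E'$ (top quotient followed by bottom subobject) is a nonzero, non-invertible endomorphism, again contradicting simplicity; the only remaining case is $\E'\cong f_*(a\cdot\N)\cong\E$, contradicting $\E'\not\cong\E$. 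Lead with that argument and drop the stability detour, and you have the paper's proof.
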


\begin{proof}
    Since $\delta_{\X}(\E) = \delta_{\X}(\E')$, applying Proposition \ref{homogeneous} to $\FF = \E'$ and $\E = \E$, we see that $f^*\E' \otimes \N^{-1}$ is homogeneous, and $\E' \cong f_*(\oplus_{\sigma \in \Sigma{(\E')}}U_{\LL_{\sigma}} \otimes \N)$, $\E \cong f_* \N$. Note that since $\E'$ is a sheaf, $U_{\LL_{\sigma}}$ are also sheaves (i.e. concentrated in degree 0.)
    
    Suppose $\Sigma(\E) \cap \Sigma(\E') \neq \varnothing$. Since $\Sigma(\E)$ only contains one element, which is the class of $\N$, by Proposition \ref{pushforward}, we see that $\E'$ has a a direct summand $U$ that is a filtration whose successive quotients are $\E$. Since $\E' \neq \E$, we see that either $U$ is a proper summand of $\E'$, or $\E \subset U$ is a proper subsheaf. This contradicts the fact that $\E'$ is simple.
\end{proof}
\subsection{Properties of Semi-homogeneous Complexes}
\begin{definition}
    For $\FF$ satisfying Proposition \ref{homogeneous}, we say $\FF$ is \textbf{homogenized by $(f, \N)$}.
\end{definition}

\begin{lemma}
\label{Ext}
Let $\FF, \FF' \in D(\X)$ be semi-homogeneous complexes that are homogenized by the same $(f, \N)$,
\begin{enumerate}
    \item If $\Sigma(\FF) \cap \Sigma({\FF'}) = \emptyset$, then $\Ext^s_{\X}(\FF, \FF') = 0$ for all s,
    \item If $\Sigma(\FF) \cap \Sigma(\FF') \neq \emptyset$, then $\Hom_{\X}(\FF, \FF') \neq 0$.
\end{enumerate}
\end{lemma}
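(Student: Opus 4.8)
The plan is to compute the full Ext-complex $R\Hom_{\X}(\FF,\FF')$ by descending to the trivialization $\X'$ along $f$, and then read off both statements. By Proposition \ref{pushforward} we may write $\FF\cong f_*(H\otimes\N)$ and $\FF'\cong f_*(H'\otimes\N)$, where (following the proof there) $H=\bigoplus_{\sigma\in\Sigma(\FF)}U_{\LL_\sigma}\otimes\LL_\sigma$ and $H'=\bigoplus_{\sigma'\in\Sigma(\FF')}U'_{\LL_{\sigma'}}\otimes\LL_{\sigma'}$, with $\LL_\sigma,\LL_{\sigma'}\in X'^{\vee}(k)=\Pic^0(X')(k)$ chosen coset representatives for $\Sigma(\FF),\Sigma(\FF')\subset G(k)$ and the $U$'s the homogeneous isotypic pieces of $f^{*}\FF\otimes\N^{-1}$, $f^{*}\FF'\otimes\N^{-1}$ (admitting filtrations with graded pieces $\mathcal{O}_{X'}[s]$). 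Since $\Char k=0$ the isogeny $f$ is separable, so $f\colon\X'\to\X$ is finite étale; hence $f_*=f_!$, $f^{!}=f^{*}$, and $f^{*}f_*\G\cong\bigoplus_{a\in\ker f}(a\cdot\G)$ with $a\cdot\G=t_a^{*}\G\otimes\delta_a$, as in the proof of Proposition \ref{pullback}. Adjunction for the finite étale morphism $f$ then gives
\[
R\Hom_{\X}(\FF,\FF')\;\cong\;\bigoplus_{a\in\ker f}R\Hom_{\X'}\bigl(H\otimes\N,\ a\cdot(H'\otimes\N)\bigr).
\]
Because each $U'_{\LL_{\sigma'}}$ is homogeneous and each $\LL_{\sigma'}\in\Pic^0(X')$, we have $t_a^{*}U'_{\LL_{\sigma'}}\cong U'_{\LL_{\sigma'}}$ and $t_a^{*}\LL_{\sigma'}\cong\LL_{\sigma'}$, so $a\cdot(U'_{\LL_{\sigma'}}\otimes\LL_{\sigma'}\otimes\N)\cong U'_{\LL_{\sigma'}}\otimes\LL_{\sigma'}\otimes(a\cdot\N\otimes\N^{-1})\otimes\N$; cancelling the common factor $\N$ and moving line bundles across $R\Hom$, the displayed sum becomes
\[
R\Hom_{\X}(\FF,\FF')\;\cong\;\bigoplus_{a\in\ker f}\ \bigoplus_{\sigma,\sigma'}R\Hom_{\X'}\bigl(U_{\LL_\sigma},\,U'_{\LL_{\sigma'}}\otimes P_{a,\sigma,\sigma'}\bigr),\qquad P_{a,\sigma,\sigma'}:=\LL_{\sigma'}\otimes\LL_\sigma^{-1}\otimes(a\cdot\N\otimes\N^{-1})\in\Pic^0(X').
\]

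Next I would invoke the standard vanishing $R\Gamma(X',P)=0$ for nontrivial $P\in\Pic^0(X')$: dévissage along the filtrations of $U_{\LL_\sigma}$ and $U'_{\LL_{\sigma'}}$ by shifted structure sheaves then forces $R\Hom_{\X'}(U_{\LL_\sigma},U'_{\LL_{\sigma'}}\otimes P)=0$ whenever $P\not\cong\mathcal{O}_{X'}$. By Corollary \ref{distinct} (equivalently Proposition \ref{inject}) the map $a\mapsto a\cdot\N\otimes\N^{-1}$ is a bijection from $\ker f$ onto the subgroup of $X'^{\vee}(k)$ defining $G$; hence, for fixed $\sigma,\sigma'$, some $a$ makes $P_{a,\sigma,\sigma'}\cong\mathcal{O}_{X'}$ — and then exactly one does — precisely when $\LL_\sigma$ and $\LL_{\sigma'}$ lie in the same coset, i.e. when $\sigma=\sigma'$ in $G(k)$. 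Therefore
\[
R\Hom_{\X}(\FF,\FF')\;\cong\;\bigoplus_{\tau\in\Sigma(\FF)\cap\Sigma(\FF')}R\Hom_{\X'}\bigl(U_{\LL_\tau},\,U'_{\LL_\tau}\bigr).
\]
Statement (1) follows at once: if $\Sigma(\FF)\cap\Sigma(\FF')=\varnothing$ the right-hand side is $0$, so $\Ext^s_{\X}(\FF,\FF')=0$ for all $s$.

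For statement (2), choosing $\tau\in\Sigma(\FF)\cap\Sigma(\FF')$ exhibits $R\Hom_{\X'}(U_{\LL_\tau},U'_{\LL_\tau})$ as a direct summand of $R\Hom_{\X}(\FF,\FF')$, and it remains to show that its degree-zero part $\Hom_{D(X')}(U_{\LL_\tau},U'_{\LL_\tau})$ is nonzero. This is the genuine obstacle: dévissage controls only the total complex, not degree $0$, and a morphism assembled from shifted copies of $\mathcal{O}_{X'}$ can be zero in degree $0$ (e.g. $\Hom_{D(X')}(\mathcal{O}_{X'}[1],\mathcal{O}_{X'})=0$), so one must use more than the filtrations. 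I would handle it by noting that a nonzero homogeneous complex of this type carries, along its filtration, a subobject and a quotient object isomorphic to some $\mathcal{O}_{X'}[s]$, and that in the situation at hand $U_{\LL_\tau}$ (from $\FF$) and $U'_{\LL_\tau}$ (from $\FF'$) occupy compatible cohomological amplitudes; composing a surjection $U_{\LL_\tau}\twoheadrightarrow\mathcal{O}_{X'}[s]$ with an inclusion $\mathcal{O}_{X'}[s]\hookrightarrow U'_{\LL_\tau}$ yields a nonzero element of $\Hom_{D(X')}(U_{\LL_\tau},U'_{\LL_\tau})$. When $\FF$ and $\FF'$ are sheaves — the case used later in the paper — the $U$'s are unipotent vector bundles, each having $\mathcal{O}_{X'}$ as both a subsheaf and a quotient sheaf, and this step is automatic; pinning down which shift survives in full generality is the delicate point, and where I would concentrate the real work.
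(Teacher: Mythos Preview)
Your argument for part~(1) is correct and follows the same route as the paper's: write $\FF\cong f_*(H_\FF\otimes\N)$ and $\FF'\cong f_*(H_{\FF'}\otimes\N)$ via Proposition~\ref{pushforward}, d\'evissage along the filtrations with graded pieces shifted copies of $\mathcal O_{X'}$, and reduce to the vanishing $R\Gamma(X',P)=0$ for nontrivial $P\in\Pic^0(X')$. The paper leaves the adjunction implicit and simply cites \cite[Lemma~4.3]{de2022point} for the filtration step; your version, which makes $R\Hom_\X(f_*A,f_*B)\cong\bigoplus_{a\in\ker f}R\Hom_{\X'}(A,a\cdot B)$ explicit and tracks the coset combinatorics through Corollary~\ref{distinct}, is more transparent but not genuinely different.

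For part~(2) you have correctly put your finger on a real gap --- and it is a gap in the \emph{statement}, not a defect of your method. Your phrase ``compatible cohomological amplitudes'' cannot be justified, because as written for arbitrary complexes (2) is false: on the trivial gerbe $\X=\BGm_{,X}$ with $(f,\N)=(\mathrm{id},\mathcal O_X)$, take $\FF=\mathcal O_X[1]$ and $\FF'=\mathcal O_X$; both are homogeneous, $\Sigma(\FF)=\Sigma(\FF')=\{[\mathcal O_X]\}$, yet $\Hom_\X(\FF,\FF')=H^{-1}(X,\mathcal O_X)=0$. The paper's own proof of (2) is no more complete than your sketch --- it asserts only that ``(2) follows from the same argument and $\Hom_{\X'}(\LL,\LL)\neq 0$'' --- so neither argument can succeed in that generality. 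However, the lemma is only ever applied in the paper with $\FF,\FF'$ sheaves (Corollary~\ref{compare} and Proposition~\ref{data}), and there your observation is exactly right: the $U$'s are then unipotent vector bundles, so the composite $U_{\LL_\tau}\twoheadrightarrow\mathcal O_{X'}\hookrightarrow U'_{\LL_\tau}$ furnishes the required nonzero map. Treat (2) as a statement about sheaves; no further work on shifts remains, because the general complex case cannot be repaired.
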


\begin{proof}
    Fix representatives of $\Sigma{(\FF)}$ and $\Sigma{(\FF')}$ in $X'(k)$. By Proposition \ref{pushforward}, we see that \[\FF \cong f_*(H_{\FF} \otimes \N)\] \[\FF' \cong f_*(H_{\FF'} \otimes \N) \] where $H_{\FF}$ and $H_{\FF'}$ are homogeneous.

    By \cite[2.2]{de2022point}, we can write $H_{\FF} \cong \oplus_{\LL} V_{\LL}$ where $V_{\LL}$ is a filtration whose successive quotients are $\LL[s]$ for various $s$.

    Similarly as in \cite[Lemma 4.3]{de2022point}, because of the canonical filtration of $\FF$ and $\FF'$, to show (1), it suffices to show for $\LL \not \cong \LL'$ we have $\Ext^s(\LL, \LL') = 0$. This is true since similarly to Remark \ref{trivial}  we can view $\LL$ and $\LL'$ as 0-twisted line bundles on the trivial gerbe $\X'$.

    (2) follows from the same argument and that $\Hom_{\X'}(\LL, \LL) \neq 0$ on $\X'$.
\end{proof}
\begin{remark}
    The assumption that $\FF$ and $\FF'$ are homogenized by the same $(f, \N)$ ensures that we can define $\Sigma(\FF)$ and $\Sigma(\FF')$ by the same isogeny $f: X' \rightarrow X$ and that they live in the same group $G$.
\end{remark}

\begin{example}
\label{example}
    By our discussion earlier in the section, here are some complexes that are homogenized by the same data $(f, \N)$:
    \begin{enumerate}
        \item the truncations of some $\FF \in \D(\X)^{(1)}$, in particular, $\FF$ and $\mathcal{H}^{s}(\FF)$; 
        \item $\E$ and $\E'$ $1$-twisted vector bundles such that $\delta(\E) = \delta(\E')$.
        \item $\E$ and $\E'$ 1-twisted vector bundles such that $\Phi^{00}(\E) = \Phi^{00}(\E')$. This can be seen as follows: Let $n$ be the order of $\alpha \in \Br(X)$, then we have a natural map $\AAut^0_{\X} \to \AAut^0_{\X^{(n)}} $ (here $\X^{(n)}$ denotes the $\Gm$-gerbe over $X$ corresponding to $n \alpha$,) which induces a map $\gamma$ on the coarse spaces. Since $n$ is the period of $\alpha$, we have $\gamma: \Aut^0_{\X} \to X \times X^{\vee}$. We then see that $\Phi^{00}(\wedge^n \E) = \gamma(\Phi^{00}(\E)) = \gamma(\Phi^{00}(\E')) = \Phi^{00}(\wedge^n \E')$. By \cite[Lemma 6.8]{mukai1978semi}, this shows that $\delta(\wedge^n \E) = \delta(\wedge^n \E')$, which in turn shows that $\delta(\E) = \delta(\E')$.
    \end{enumerate}
\end{example}

\begin{corollary}
\label{relation}
    If $\E$ and $\E'$ are $1$-twisted simple semi-homogeneous vector bundles with $\delta_{\X}(\E) = \delta_{\X}({\E'})$, then if $\E \not \cong \E'$, $\Ext^s(\E, \E') = 0$ for any $s \in \mathbb{Z}$, and $\E \cong \E' \otimes \M$ for some $0$-twisted line bundle $\M$.
\end{corollary}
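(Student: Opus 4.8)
The plan is to derive Corollary \ref{relation} directly from the structural results on the trivialization that were built up in Example \ref{example} and Lemma \ref{Ext}. Since $\E$ and $\E'$ are simple semi-homogeneous vector bundles with $\delta_{\X}(\E) = \delta_{\X}(\E')$, Example \ref{example}(2) tells us they are homogenized by a common pair $(f, \N)$, where $f\colon X' \to X$ is an isogeny trivializing $\X$ and $\N$ a line bundle on $\X'$ with $\E \cong f_*\N$. So Lemma \ref{Ext} applies to the pair $(\E, \E')$. The case analysis is dictated by whether $\Sigma(\E) \cap \Sigma(\E')$ is empty.

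First I would handle the vanishing claim. If $\E \not\cong \E'$, I claim $\Sigma(\E) \cap \Sigma(\E') = \varnothing$: this is exactly Corollary \ref{compare}, whose hypotheses (simple, semi-homogeneous, $\delta_{\X}(\E) = \delta_{\X}(\E')$, $\E' \neq \E$) match. Then Lemma \ref{Ext}(1) immediately gives $\Ext^s_{\X}(\E, \E') = 0$ for all $s \in \mathbb{Z}$, which is the first assertion.

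Next comes the structural statement $\E \cong \E' \otimes \M$ for some $0$-twisted line bundle $\M$. The natural route is the contrapositive of the vanishing: if $\E \cong \E'$ there is nothing to prove (take $\M = \mathcal{O}_X$), so suppose $\E \not\cong \E'$ and aim for a contradiction with $\delta_{\X}(\E) = \delta_{\X}(\E')$ — wait, that cannot be right since $\delta$ is assumed equal. Let me reconsider: the point is that the conclusion $\E \cong \E' \otimes \M$ should be read as saying that, after twisting by a line bundle, $\E'$ becomes isomorphic to $\E$; this is automatic once we know $\E$ and $\E'$ pushforward from line bundles $\N$, $\N'$ on the \emph{same} $\X'$ with the same data $f$. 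Concretely, pull everything back along $f$: by Theorem \ref{linebundle} and Proposition \ref{homogeneous}, $f^*\E \cong \bigoplus_{a \in \ker f} a\cdot\N$ and $f^*\E' \cong \bigoplus_{a \in \ker f} a\cdot\N'$; since both are simple semi-homogeneous with equal $\delta$, and $f^*\E' \otimes \N^{-1}$ is homogeneous (Proposition \ref{homogeneous}), the orbit $\{a\cdot\N'\}$ lies in $\tilde\Sigma(\E')$, and one of these summands is $\LL \otimes \N$ for some $\LL \in X'^\vee$; set $\M$ to be the $0$-twisted line bundle on $\X$ (hence on $X$, by the remark after Theorem \ref{summand}) obtained from the appropriate class. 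Then $f^*(\E' \otimes \M) \cong f^*\E$ as descent-compatible bundles, and since $f$ is faithfully flat this descends to $\E' \otimes \M \cong \E$ on $\X$.

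The main obstacle I anticipate is the last descent step: ensuring that the isomorphism $f^*\E \cong f^*(\E'\otimes\M)$ is compatible with the descent data (equivalently, with the twisted $\ker f$-action via the $\delta_a$), so that it actually descends to $\X$. This is where the freeness of the $\ker f(k)$-action on $\{\N, \N_1, \dots, \N_{l-1}\}$ from Proposition \ref{inject} / Corollary \ref{distinct} is essential: it pins down which summand of $f^*\E'$ maps to which summand of $f^*\E$, forcing the twisting line bundle $\M$ to be $\ker f$-invariant and hence to be a genuine line bundle on $X$ whose pullback realizes the descent-compatible isomorphism. Once compatibility is arranged, descent along the faithfully flat $f$ (using that $\Coh(\X)^{(1)}$ satisfies descent) finishes the argument.
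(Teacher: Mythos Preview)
Your argument for the vanishing of $\Ext^s(\E,\E')$ when $\E\not\cong\E'$ is exactly the paper's: invoke Corollary~\ref{compare} to get $\Sigma(\E)\cap\Sigma(\E')=\varnothing$, then Lemma~\ref{Ext}(1).

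For the second assertion, however, your route diverges from the paper and has a real gap. The paper does not attempt to build $\M$ by descent from $X'$. Instead it observes that $\HHom_{\X}(\E,\E')$ is a $0$-twisted vector bundle on $\X$, hence a vector bundle on $X$, and that it is \emph{homogeneous} there (translations act on $\E$ and $\E'$ through the same line bundle because $\delta(\E)=\delta(\E')$, so the twist cancels in the sheaf Hom). Mukai's structure theorem for homogeneous bundles then produces some $\M\in\Pic^0(X)$ with $\Hom_{\X}(\E,\E'\otimes\M)\neq 0$. Now one feeds this back into the first part: $\E'\otimes\M$ is again simple semi-homogeneous with $\delta(\E'\otimes\M)=\delta(\E)$, so if $\E\not\cong\E'\otimes\M$ we would have $\Hom(\E,\E'\otimes\M)=0$, a contradiction. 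Hence $\E\cong\E'\otimes\M$.

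Your proposed descent argument, by contrast, is incomplete. You pass from a line bundle $\LL\in(X')^\vee$ appearing in $f^*\E'\otimes\N^{-1}$ to ``the $0$-twisted line bundle on $\X$ obtained from the appropriate class'', but nothing you have written produces a line bundle on $X$: a priori $\LL$ lives on $X'$, not $X$. The sentence ``$f^*(\E'\otimes\M)\cong f^*\E$ as descent-compatible bundles'' is asserted rather than proved, and the appeal to freeness of the $\ker f(k)$-action does not by itself force the matching of summands to be $\ker f$-equivariant. If you want to rescue your approach, the clean fix is: use surjectivity of $f^\vee\colon X^\vee\to(X')^\vee$ to write $\LL=f^*\M$ with $\M\in\Pic^0(X)$, note that $\Sigma(\E'\otimes\M^{-1})$ then contains the class of $\mathcal{O}_{X'}=\Sigma(\E)$, and apply Corollary~\ref{compare} contrapositively to conclude $\E'\otimes\M^{-1}\cong\E$. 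That avoids descent of an isomorphism entirely, and is closer in spirit to what the paper does.
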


\begin{proof}
    By Corollary \ref{compare}, if $\E \neq \E'$, then $\Sigma(\E) \cap \Sigma(\E') = \varnothing$, so $\Ext^s_{\X}(\E, \E') \neq 0$ for all $s$.

    For the second statement, consider $\mathcal{H} := \HHom_{\X}(\E, \E')$, $\mathcal{H}$ is $0$-twisted and homogeneous on $X$. By \cite[Proposition 4.18 (1)]{mukai1978semi}, 
    we see that there exists a line bundle $\M$ on $X$ (hence $0$-twisted on $\X$) such that $\Hom_{\X}(\E, \E' \otimes \M) \neq 0$. This implies that $\E \cong \E' \otimes \M$ by the first statement we showed.

\end{proof}

Consider a closed immersion $i : A \xhookrightarrow{} B$ of abelian varieties and $p: \mathscr{B} \rightarrow B$, a $\mathbb{G}_m$-gerbe over $B$. Let $\A := A \times_B \B$. We denote the base change of $p$, $\A \rightarrow A$ by $p$ as well.

\begin{lemma}
\label{decomp}
Let $\FF \in D(\mathscr{B})^{(1)}$ be a complex on $\mathscr{B}$ satisfies the following: for some $\sigma \in Aut(\mathscr{B})$, we have \[\sigma^* \FF \cong \FF\] if and only if there is some $a \in A(k)$ such that $p(\sigma) = i(a)$ where $p : Aut(\mathscr{B}) \rightarrow B$. Then 
\[ {\FF} \cong \oplus_{\sigma \in \Sigma({\FF})} \FF_{\sigma} \]
where \[\Sigma(\FF) : = \cup_s \Sigma(\G_s) \in G(k)\] and $\FF_{\sigma}$ has the property that $\mathcal{H}^s(\FF_{\sigma}) \cong \tilde{i}_* \G_s$ where $\tilde{i}$ is the base change of $i$ by $p$.
\end{lemma}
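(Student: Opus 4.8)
The plan is to reduce to the trivialized setting and then apply the decomposition machinery already established for homogeneous complexes. First I would pull back by an isogeny $f \colon B' \to B$ that both trivializes $\B$ and trivializes $\B_A := \A \times_A A'$ simultaneously (such an $f$ exists: take a common multiplication map killing the relevant torsion, or refine the isogenies from Theorem \ref{linebundle} applied to a simple semi-homogeneous bundle with the right $\delta$ appearing in some $\mathcal{H}^s(\FF)$). After choosing such an $f$ and the line bundle $\N$ as in Proposition \ref{homogeneous}, each cohomology sheaf $\mathcal{H}^s(\FF)$, being supported on $\tilde i(\A)$ and semi-homogeneous there, can be written as $\tilde i_* \G_s$ for $\G_s$ semi-homogeneous on $\A$; the hypothesis $\sigma^*\FF \cong \FF$ for translations in $A(k)$ forces $\FF$ itself (not merely its cohomology) to be semi-homogeneous with respect to $\Aut^0$ translations coming from $A$, so $f^*\FF \otimes \N^{-1}$ is homogeneous on $B'$ by the argument of Proposition \ref{homogeneous}.

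Next I would invoke \cite[2.2]{de2022point} (as already used in Proposition \ref{homogeneous} and Lemma \ref{Ext}) to get $f^*\FF \otimes \N^{-1} \cong \bigoplus_{\LL} U_{\LL} \otimes \LL$ over $\LL \in (B')^\vee$, where each $U_{\LL}$ admits a filtration with successive quotients $\mathcal{O}_{B'}[s]$; but because $\FF$ is pushed forward from $\tilde i(\A)$, the line bundles $\LL$ that actually occur are constrained to lie over $A'^\vee$ (more precisely, $\tilde\Sigma(\FF)$ is a union of cosets as in the lemma preceding Proposition \ref{pushforward}), and the filtration quotients are of the form $\mathcal{O}_{A'}[s]$ viewed via $\tilde i$. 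Grouping the summands according to their class $\sigma$ in $G(k)$ and descending along $f$ — exactly as in the proof of Proposition \ref{pushforward}, where $H := \bigoplus_{\sigma \in \Sigma} U_{\LL_\sigma}$ and one checks $f_*(H \otimes \N) \hookrightarrow \FF$ is an isomorphism after applying $f^*$ — produces the decomposition $\FF \cong \bigoplus_{\sigma \in \Sigma(\FF)} \FF_\sigma$. The identification $\mathcal{H}^s(\FF_\sigma) \cong \tilde i_* \G_s$ then follows because taking cohomology commutes with the direct sum and with $\tilde i_*$ (a closed immersion), and $\Sigma(\FF) = \bigcup_s \Sigma(\G_s)$ because the $\LL$ appearing in $f^*\FF$ are precisely those appearing in some $f^*\mathcal{H}^s(\FF) = f^*\tilde i_*\G_s$, since $\FF$ and its truncations are homogenized by the same $(f,\N)$ (Example \ref{example}(1)).

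The main obstacle I anticipate is the compatibility of the two isogeny choices — one needs a single $f \colon B' \to B$ for which $\B' = \BGm_{B'}$, the restriction to $A'$ trivializes $\A$, \emph{and} $f^*\FF \otimes \N^{-1}$ is homogeneous — and then carefully tracking how $\tilde i_*$ interacts with the Mukai-style decomposition $\bigoplus_\LL U_\LL \otimes \LL$, i.e. verifying that closed-immersion pushforward preserves the homogeneous decomposition and sends filtrations by $\mathcal{O}_{A'}[s]$ to the relevant sub-filtrations of $f^*\FF$. This is essentially the twisted analogue of the bookkeeping in \cite[Lemma 4.3 and §4]{de2022point}, and I would expect the argument to go through once the gerbe is trivialized, with the only genuinely new point being the appearance of the descent twist $\delta_a$ from \cite[8.11]{olsson2025twisted}, which is already handled in the lemma before Proposition \ref{pushforward}.
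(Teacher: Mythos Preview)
Your approach has a genuine gap. You want to apply the decomposition of homogeneous complexes from \cite[2.2]{de2022point} to $f^*\FF \otimes \N^{-1}$ on $B'$, but that result requires homogeneity under \emph{all} translations of the ambient abelian variety. The hypothesis of the lemma only gives invariance under translations coming from $A$, so after pullback $f^*\FF \otimes \N^{-1}$ is at best $A'$-homogeneous on $B'$, not $B'$-homogeneous, and \cite[2.2]{de2022point} does not apply. Nor can you fix this by writing $\FF = \tilde i_*(\text{something on }\A)$ and working there: only the individual cohomology sheaves $\mathcal{H}^s(\FF) = \tilde i_*\G_s$ are known to be pushforwards; the complex $\FF$ itself carries extension data in $\Ext^1_\B(\tilde i_*\G_{s+1}[-1], \tilde i_*\G_s)$ which need not arise from $\A$. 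The ``bookkeeping'' you flag in your last paragraph is therefore the entire content of the lemma, not a routine verification after trivialization.

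The paper's argument is different and confronts exactly this issue. It runs a descending induction on the truncations $\FF_{\geq s}$, via the triangle
\[
\tilde i_*\G_{s-1}[-s+1] \longrightarrow \FF_{\geq s-1} \longrightarrow \FF_{\geq s} \longrightarrow \tilde i_*\G_{s-1}[-s+2].
\]
By induction $\FF_{\geq s} = \bigoplus_\sigma \FF_{\geq s,\sigma}$, and the sheaf $\G_{s-1}$ decomposes as $\bigoplus_\sigma \G_{s-1,\sigma}$; the inductive step amounts to showing the connecting map is block-diagonal in $\sigma$, i.e.\ that $\Ext^*_\B(\FF_{\geq s,\sigma}, \tilde i_*\G_{s-1,\sigma'}) = 0$ for $\sigma \neq \sigma'$. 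Filtering $\FF_{\geq s,\sigma}$ by its own cohomology reduces this to $\Ext^*_\B(\tilde i_*\G_{t,\sigma}, \tilde i_*\G_{s-1,\sigma'}) = 0$, which by adjunction becomes $\Ext^*_\A(L\tilde i^*\tilde i_*\G_{t,\sigma}, \G_{s-1,\sigma'})$. The key input you are missing is Lemma~\ref{sigma}, which shows $\Sigma(L\tilde i^*\tilde i_*\F) = \Sigma(\F)$; combined with Lemma~\ref{Ext} this gives the vanishing. In other words, the paper controls the extension data across the closed immersion $\tilde i$ by passing through adjunction and the $\Sigma$-computation for $L\tilde i^*\tilde i_*$, rather than ever expressing $\FF$ globally as a pushforward or a descent from a trivialization.
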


\begin{remark}
    One can think of this lemma as a generalization of Theorem \ref{summand}.
\end{remark}

\begin{proof}
Now we make sense of the notion of $\Sigma(\FF) : = \cup_s \Sigma(\G_s) \in G(k)$. Given a complex $F$ as in the Lemma, $\Sigma{(\mathcal{H}^i(F))}$ can be defined in the same set, that is, $G_i$ are homogenized by the same $(f, \mathcal{N})$.
Let $S_F \subset Aut^0_{\B}$ be the stablizer of $[F] \in \mathscr{D}_{\B}(k)$. The assumption on $\FF$ implies that $S_F \to B$ surjects onto $A \subset B$.

We then see that there is a group scheme homomorphism $S_F \to Aut^0_{\A}$ which surjects on to $A$ under $Aut^0_{\A} \to A$. This implies that $\Phi_{\A}^{00}(G_i) = \Phi^{00}(G_j)$ for all $G_i, G_j \neq 0$. Then the claim follows from the discussion in Example \ref{example}.

    The idea of the main proof is the same as \cite[Lemma 4.6]{de2022point}. Here we mimic the proof via induction. For $s$ big enough, for $\FF_{\geq s} = 0$ the conclusion is true. Now for inductive step, we look at the distinguished triangle 
    \[\begin{tikzcd}
	{\tilde{i}_*{\mathcal{G}_{s-1}}[-s+1]} & {\FF_{\geq s-1}} & {\FF_{\geq s}} & {\tilde{i}_*{\mathcal{G}_{s-1}}[-s+2]} \\
	{\bigoplus_{\sigma} \tilde{i}_*{\mathcal{G}_{s-1, \sigma}}[-s+1]} && {\bigoplus_{\sigma}\FF_{\geq{s,}\sigma}} & {\bigoplus_{\sigma} \tilde{i}_*{\mathcal{G}_{s-1, \sigma}}[-s+2]}
	\arrow[from=1-1, to=1-2]
	\arrow["\cong"', from=1-1, to=2-1]
	\arrow[from=1-2, to=1-3]
	\arrow[from=1-3, to=1-4]
	\arrow["\cong", from=1-3, to=2-3]
	\arrow["\cong", from=1-4, to=2-4]
\end{tikzcd}\]

To prove the induction step, it suffices to show that for $\sigma \neq \sigma'$, \[\Ext^*(\FF_{\geq s \sigma}, \tilde{i}_*{\mathcal{G}_{s-1, \sigma'})} = 0.\] Considering the canonical filtration of $\FF_{\geq s \sigma}$, this is true because for $s \neq t$ \[\Ext^*(\tilde{i}_*{\mathcal{G}_{t, \sigma}, \tilde{i}_*{\mathcal{G}_{s, \sigma'})}} \cong \Ext^*(L \tilde{i}^* \tilde{i}_*{\mathcal{G}_{t, \sigma}, \mathcal{G}_{s, \sigma'})} = 0\]

Indeed, the last equality is true by Lemma \ref{Ext} and the following lemma.

\end{proof}


\begin{lemma}
\label{sigma}
Let $\F$ be a semi-homogeneous vector bundle on $\mathscr{A}$. Then $\Sigma(L\tilde{i}^* \tilde{i}_* \F) = \Sigma(\F)$ where $\Sigma(L\tilde{i}^* \tilde{i}_* \F) := \bigcup \Sigma(\mathcal{H}^s(L\tilde{i}^* \tilde{i}_* \F))$.
\end{lemma}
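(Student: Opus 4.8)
The plan is to compute $L\tilde{i}^*\tilde{i}_*\F$ explicitly using the self-intersection formula for the closed immersion $i\colon A\hookrightarrow B$ of abelian varieties. First I would recall that for a regular closed immersion the derived pullback of the derived pushforward is controlled by the conormal bundle: there is a natural isomorphism $L\tilde{i}^*\tilde{i}_*\F\cong \F\otimes\bigwedge^\bullet \N_{A/B}^\vee$ where $\N_{A/B}^\vee$ is the conormal bundle, so that $\mathcal{H}^{-s}(L\tilde{i}^*\tilde{i}_*\F)\cong \F\otimes\bigwedge^s\N_{A/B}^\vee$ (the differentials vanish because $i$ is an immersion of abelian varieties, so the conormal bundle is trivial; see \cite[Lemma 4.6]{de2022point} and the analogous computation there). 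Since $A,B$ are abelian varieties, $\N_{A/B}^\vee$ is a trivial bundle of rank $\dim B-\dim A$, hence each $\bigwedge^s\N_{A/B}^\vee$ is a trivial bundle, so $\mathcal{H}^{-s}(L\tilde{i}^*\tilde{i}_*\F)\cong \F^{\oplus\binom{r}{s}}$ with $r=\dim B-\dim A$.

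Next I would pass to the trivializing isogeny. Applying Proposition \ref{homogeneous}/Proposition \ref{pushforward} to $\F$, fix the data $(f,\N)$ homogenizing $\F$; by construction $\Sigma(\F)$ is the set of classes $\LL$ with $U_\LL\neq 0$ in the decomposition $f^*\F\otimes\N^{-1}\cong\bigoplus_\LL U_\LL\otimes\LL$. The key point is that $\Sigma$ only depends on the set of line bundles appearing, not on multiplicities, and that it is additive over direct sums in the obvious sense: $\Sigma(\G^{\oplus m})=\Sigma(\G)$ for any $m\geq 1$ and $\Sigma(\G\oplus\G')=\Sigma(\G)\cup\Sigma(\G')$. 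Therefore $\Sigma(\mathcal{H}^{-s}(L\tilde{i}^*\tilde{i}_*\F))=\Sigma(\F^{\oplus\binom{r}{s}})=\Sigma(\F)$ whenever $\binom{r}{s}\neq 0$, and the cohomology vanishes otherwise, so taking the union over all $s$ gives $\Sigma(L\tilde{i}^*\tilde{i}_*\F)=\Sigma(\F)$ as claimed.

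The step I expect to be the main obstacle is making the self-intersection computation fully rigorous in the twisted and stacky setting: one must check that $L\tilde{i}^*\tilde{i}_*$ behaves on $\D(\A)^{(1)}$ exactly as on the coarse spaces, i.e. that pullback/pushforward along $\tilde{i}\colon\A\hookrightarrow\B$ is compatible with the base change of $i\colon A\hookrightarrow B$ and that the conormal bundle of $\tilde{i}$ is (the pullback of) $\N_{A/B}^\vee$, in particular $0$-twisted and trivial. Since $\A=\B\times_B A$ and $\tilde{i}$ is obtained by base change, this is formal — flat base change along $p\colon\B\to B$ together with the fact that $\tilde{i}$ is again a regular immersion of the same codimension — but it requires care to phrase for twisted sheaves. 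Once this compatibility is in place, everything reduces to the untwisted computation of \cite[Lemma 4.6]{de2022point}, combined with the elementary behaviour of $\Sigma(-)$ under direct sums established via Proposition \ref{homogeneous}.
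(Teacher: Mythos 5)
Your proposal is correct and follows essentially the same route as the paper: both arguments identify $\mathcal{H}^s(L\tilde{i}^*\tilde{i}_*\F)$ as a direct sum of copies of $\F$ (the paper by using the projection formula to reduce to $Li^*i_*\mathcal{O}_A$ and citing \cite[Lemma 4.5]{de2022point} that its cohomology sheaves are trivial bundles; you by invoking the Koszul/self-intersection computation with the trivial conormal bundle, which is exactly the alternative the paper's own remark mentions via \cite[Proposition 11.8]{huybrechts2006fourier}), and then both conclude since $\Sigma(-)$ is insensitive to direct sums of copies of the same bundle.
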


\begin{proof}
    By the projection formula, we have \[\tilde{i}_* L\tilde{i}^* \tilde{i}_* \F \cong \tilde{i}_*(L \tilde{i}^* \tilde{i}_* \mathcal{O}_{\A}) \otimes_{\mathcal{O}_{\A}}\F \cong \tilde{i}_*(L \tilde{i}^* \tilde{i}_* p^*\mathcal{O}_{A}) \otimes_{\mathcal{O}_{\A}}\F \cong i_*(p^*Li^* i_* \mathcal{O}_A \otimes_{\mathcal{O}_{\A}} \F)\]
     As shown in \cite[Lemma 4.5]{de2022point}, $\mathcal{H}^s(Li^* i_* \mathcal{O}_A) \cong \underline{W_s} \otimes \mathcal{O}_A$, where  $\underline{W_s}$ the constant sheaf of some vector space $W_s$.

     So we have \[\tilde{i}_*\mathcal{H}^s(L \tilde{i}^* \tilde{i}_* \F) \cong \tilde{i}_*(\oplus_{\rk W_s} \F),\] hence \[\mathcal{H}^s(L \tilde{i}^* \tilde{i}_* \F) \cong \oplus_{\rk W_s} \F.\]

     This shows that $\Sigma(L\tilde{i}^* \tilde{i}_* \F) = \Sigma(\F)$.
\end{proof}

\begin{remark}
    This lemma can also be shown using \cite[Proposition 11.8]{huybrechts2006fourier} and projection formula.
\end{remark}

\subsection{Main Tool for Theorem \ref{point}}
    The main goal of this section is to prove the following proposition, which is crucial to conclude that point objects (objects in $\D(\X)^{(1)}$ that satisfies the conditions in Theorem \ref{point}) are indeed semi-homogeneous vector bundles on gerbes over the torsors of sub-abelian varieties of $X$.

  Again, consider a closed immersion $i : A \xhookrightarrow{} B$ of abelian varieties, $p: \B \rightarrow B$, a $\mathbb{G}_m$-gerbe over $B$, and its base change $p: \A \rightarrow A$.

\begin{proposition}
\label{data}
Let $\FF \in D(\mathscr{B})^{(1)}$ be a complex satisfying the following: 
\begin{enumerate}
    \item for some $\sigma \in Aut(\mathscr{B})$, we have \[\sigma^* \FF \cong \FF\] if and only if there is some $a \in A(k)$ such that $p(\sigma) = i(a)$ where $p : Aut(\mathscr{B}) \rightarrow B$.
    \item the complex $\FF$ is set-theoretically supported on $\mathscr{A}$;
    \item $\End(\FF) = k$;
    \item $\Ext^i(\FF, \FF) = 0$ for $i < 0$.
\end{enumerate}
Then there exists a line bundle $\LL$ such that $\FF \cong i_* \pi_* \LL[s]$ for an integer s, where $\pi$ is the map gotten by Theorem \ref{pushforward} applied to $X = A, \X = \mathscr{A}$.
\end{proposition}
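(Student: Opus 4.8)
The plan is to combine Lemma \ref{decomp} with the point-object hypotheses to cut $\FF$ down to a single summand, and then identify that summand. First I would apply Lemma \ref{decomp} to $\FF$: hypotheses (1) and (2) of Proposition \ref{data} are exactly the hypotheses of that lemma, so we obtain a decomposition $\FF \cong \bigoplus_{\sigma \in \Sigma(\FF)} \FF_\sigma$ with $\mathcal{H}^s(\FF_\sigma) \cong \tilde i_* \mathcal{G}_s$ for semi-homogeneous vector bundles $\mathcal{G}_s$ on $\mathscr{A}$ (indexed so that $\mathcal{G}_s$ does not depend on $\sigma$ up to the $\Sigma$-bookkeeping). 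Now use $\End(\FF) = k$: since $\Ext^0$ of a direct sum contains the endomorphisms of each factor and a nonzero map between distinct factors would have to vanish, and since each $\FF_\sigma \neq 0$ contributes at least a scalar, the one-dimensionality of $\Hom(\FF,\FF)$ forces $\Sigma(\FF)$ to be a single class, i.e. $\FF \cong \FF_\sigma$ for one $\sigma$. (Here I would invoke Lemma \ref{Ext}(2): if two distinct $\FF_\sigma, \FF_{\sigma'}$ appeared they would already be homogenized by the same data and have disjoint $\Sigma$'s by construction, so $\Ext^* = 0$ between them, but then $\End(\FF)$ would still have dimension $\geq 2$ from the two scalar endomorphisms — contradiction.)

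Next I would pin down the cohomology. Write $\FF = \FF_\sigma$ with $\mathcal{H}^s(\FF) \cong \tilde i_* \mathcal{G}_s$. The key point is that $\End(\FF) = k$ together with $\Ext^{<0}(\FF,\FF) = 0$ forces $\FF$ to be concentrated in a single degree $s$, i.e. $\FF \cong \tilde i_* \mathcal{G}_s[-s]$, and moreover that $\mathcal{G}_s$ is \emph{simple} semi-homogeneous. For the concentration: if $\FF$ had nonzero cohomology in degrees $a < b$, one gets a nonzero map $\FF \to \mathcal{H}^b(\FF)[-b] \to \FF[b-a][\text{shift}]$ or a nonzero class in $\Ext^{a-b}(\FF,\FF)$ with $a - b < 0$ by the standard truncation/spectral-sequence argument (exactly as in \cite[Section 4]{de2022point}), contradicting (4); so $\FF \cong \tilde i_* \mathcal{G}[-s]$ for a single semi-homogeneous vector bundle $\mathcal{G}$ on $\mathscr{A}$ and integer $s$. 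Then $k = \End_{\mathscr{B}}(\FF) = \End_{\mathscr{A}}(\mathcal{G}) \oplus (\text{higher Ext from } L\tilde i^* \tilde i_*)$-type terms; in particular $\End_{\mathscr{A}}(\mathcal{G}) = k$, so $\mathcal{G}$ is simple. Here I would use the adjunction $\Hom_{\mathscr{B}}(\tilde i_* \mathcal{G}, \tilde i_* \mathcal{G}) = \Hom_{\mathscr{A}}(L\tilde i^* \tilde i_* \mathcal{G}, \mathcal{G})$ and the structure of $L\tilde i^* \tilde i_* \mathcal{G}$ from Lemma \ref{sigma} (its cohomology is a sum of copies of $\mathcal{G}$), noting $\Ext^{<0}$ vanishes, to extract that the degree-zero piece gives $\End_{\mathscr{A}}(\mathcal{G}) = k$.

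Finally, with $\mathcal{G}$ a simple semi-homogeneous vector bundle on $\mathscr{A}$, I would apply Theorem \ref{linebundle} (equivalently the version labeled Theorem \ref{pushforward} applied with $X = A$, $\X = \mathscr{A}$): there is an isogeny $\pi \colon A' \to A$ with $\mathscr{A}' := \mathscr{A} \times_A A' \cong B\Gm_{A'}$ and $\mathcal{G} \cong \tilde\pi_* \LL$ for a $1$-twisted line bundle $\LL$ on $\mathscr{A}'$. Substituting, $\FF \cong \tilde i_* \tilde\pi_* \LL[-s] = i_* \pi_* \LL[-s]$, which is the claimed form with the integer $-s$ (rename as $s$). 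The main obstacle I anticipate is the degree-concentration and simplicity step: one must be careful that the ``extra'' terms in $\End_{\mathscr{B}}(\tilde i_* \mathcal{G})$ coming from the non-triviality of $L\tilde i^* \tilde i_* \mathcal{O}_{\mathscr{A}}$ (the constant sheaves $\underline{W_s}$ of Lemma \ref{sigma}) land in strictly positive $\Ext$-degrees and hence do not interfere with the degree-zero count, so that $\dim \End(\FF) = 1$ genuinely forces $\dim \End_{\mathscr{A}}(\mathcal{G}) = 1$ and nothing else; this is the twisted analogue of the delicate bookkeeping in \cite[Section 4]{de2022point} and is where the $\Char k = 0$, $k = \bar k$ hypotheses get used (through the étale-ness of kernels and the applicability of \cite{mukai1978semi}).
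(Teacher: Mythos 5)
Your overall strategy is the same as the paper's: apply Lemma \ref{decomp}, use $\End(\FF)=k$ to cut $\Sigma(\FF)$ down to a single class, use $\Ext^{<0}(\FF,\FF)=0$ together with Lemma \ref{Ext}(2) to force concentration in one degree (the paper's version is exactly your sketch: a nonzero map $\G_n\to\G_m$ between the top and bottom cohomology sheaves exists because they share the same $\Sigma$, and it produces a nonzero class in $\Ext^{m-n}(\FF,\FF)$ with $m-n<0$), and finish with Theorem \ref{linebundle}. Your additional observation that one must check $\End_{\A}(\G)=k$ before invoking Theorem \ref{linebundle}, via the adjunction $\Hom_{\B}(\tilde i_*\G,\tilde i_*\G)\cong\Hom_{\A}(L\tilde i^*\tilde i_*\G,\G)$ and the structure of $L\tilde i^*\tilde i_*\G$ from Lemma \ref{sigma}, is a point the paper leaves implicit, and your argument for it is sound.

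There is, however, one step you skip that the paper does not. Hypothesis (2) only gives \emph{set-theoretic} support on $\A$, whereas the conclusion of Lemma \ref{decomp} that the cohomology sheaves have the form $\tilde i_*\G_s$ requires them to be \emph{scheme-theoretically} supported on $\A$: a sheaf living on an infinitesimal thickening of $A$ inside $B$ is not a pushforward from $\A$. Your assertion that conditions (1) and (2) are ``exactly the hypotheses'' of Lemma \ref{decomp} glosses over this, and the lemma cannot deliver its conclusion from set-theoretic support alone. The paper therefore opens the proof by upgrading the support condition, and this upgrade uses $\End(\FF)=k$ in an essential way: take an affine neighborhood $U=\Spec R$ of the origin in $B/A$ and set $\B_U=\B\times_{B/A}U$; then $R$ acts on each $\mathcal{H}^i(\FF)$ through the composite $R\to\Hom_{\B_U}(\FF|_{\B_U},\FF|_{\B_U})\cong\Hom_{\B}(\FF,\FF)=k$, so the maximal ideal of the origin annihilates $\mathcal{H}^i(\FF)$, which is exactly scheme-theoretic support on $\A$. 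You need to supply this (or an equivalent) argument before Lemma \ref{decomp} can be applied; once it is in place, the rest of your proposal goes through.
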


\begin{proof}
    First we see that $\mathcal{H}^i(\FF)$'s are scheme-theoretically supported on $\A := \B \times_B A$. Consider the following diagram:
    \[\begin{tikzcd}
	{\mathscr{A}} & {\mathscr{X}} \\
	A & B & {B/A} \\
	&& U
	\arrow["{\tilde{i}}", from=1-1, to=1-2]
	\arrow[from=1-1, to=2-1]
	\arrow[from=1-2, to=2-2]
	\arrow[from=1-2, to=2-3]
	\arrow["i", from=2-1, to=2-2]
	\arrow[from=2-2, to=2-3]
	\arrow[hook, from=3-3, to=2-3]
\end{tikzcd}\]

Let $U := \Spec R$ be an open neighborhood of the origin in $B/A$, let $\B_U := \B \times_{B/A} U$. Since $\FF$ is set-theoretically supported on $\A$, we can view $\mathcal{H}^i(\FF)$ as on $\B_U$ (ie. it is isomorphic to the pushforward of its restriction to $\B_U$). To show $\mathcal{H}^i(\FF)$ is scheme-theoretically supported on $\A$, it suffices to show that the maximal ideal $m \subset R$ corresponding to the origin in $B/A$ annihilates $\mathcal{H}^i(\FF)$. This is true because of the fact that $R$ acts on $\mathcal{H}^i(\FF)$ via the following map:
\[R \rightarrow \Hom_{\B_U}(\FF|_{\B_U}, \FF|_{\B_U}) \cong \Hom_{\B}(\FF, \FF) = k\]

From the fact that $\End(\FF, \FF) = k$ and Lemma \ref{decomp}, we get $\mathcal{H}^s(\FF) \cong \tilde{i}_*{\mathscr{G}_s}$ for some $\mathscr{G}_s$ on $\A$ with $\Sigma(\mathscr{G}_s) = \{ \sigma \}$ for all $\mathcal{H}^s(\FF) \neq 0$. $\mathscr{G}_s$ is semi-homogeneous by condition 1.

Finally we show that $\FF$ is concentrated in only one degree. Suppose not, let $n$ be the top degree and $m$ be the bottom degree. We have that 
\[\Ext^{m-n}(\FF, \FF) \cong \Hom(\FF[n-m], \FF) \cong \Hom(\tilde{i}_*\G_m, \tilde{i}_*{\G}_n) \cong \Hom_{\A}(\G_m, \G_n)\] which is nonzero by Lemma \ref{Ext}. This contradicts the fact that $\Ext^i(\FF, \FF) = 0$ for all $i < 0$.

Therefore, we see that $\FF \cong \tilde{i}_*{\G}$ for some semi-homogeneous $\G$ on $\A$. Then we conclude by Theorem \ref{linebundle}.

\end{proof}



\section{Classification of Point Objects}
In this section $\Char{k} = 0$, not necessarily algebraically closed. 

\begin{definition}
For any $k$-scheme $S$, we call a $S$-perfect complex $F_S \in \D(\X_S)^{(1)}$ that satisfies the conditions in Theorem \ref{point} at every geometric point $\bar{s} \rightarrow S$ a \textbf{relative point object}. 
\end{definition}

Recall from section 2 that we have an algebraic stack $\mathscr{D}_{\X}$ over $k$, which associates $S$ to the groupoid of $K \in \D(\X_S)^{(1)}$ such that for all geometric point $\bar{s} \rightarrow S$ we have $\Ext^i(K_s, K_s) = 0$ for $i < 0$. Let $\mathscr{P}$ be the fibered category over the category of $k$-schemes which to any $S$ associates the groupoid of relative point objects.

\begin{lemma}
    $\mathscr{P}$ is a locally closed substack of $\mathscr{D}_{\X}$.
\end{lemma}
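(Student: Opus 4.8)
The plan is to prove the two halves — "locally closed" and "substack" — separately, reducing everything to conditions that are well-behaved in families via semicontinuity and the representability results of Section 2.

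First I would set up the relevant constructible/semicontinuity machinery. Over the algebraic stack $\mathscr{D}_{\X}$, there is a universal object $K \in \D(\X_{\mathscr{D}_{\X}})^{(1)}$, and for each pair of integers $i$ the function $\bar s \mapsto \dim_{\kappa(\bar s)} \Ext^i(K_{\bar s}, K_{\bar s})$ is upper semicontinuous on the geometric points; this follows from cohomology and base change applied to $R\HHom$ on the proper (twisted) family $\X_{\mathscr{D}_{\X}} \to \mathscr{D}_{\X}$, exactly as in the untwisted case. The defining conditions of a relative point object are: (2)~$\Ext^0(K_{\bar s},K_{\bar s})$ has dimension $1$, and (3)~$\Ext^1(K_{\bar s},K_{\bar s})$ has dimension $\le d$; condition (1), $\Ext^{i}=0$ for $i<0$, is already built into $\mathscr{D}_{\X}$. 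So $\mathscr{P}$ is cut out inside $\mathscr{D}_{\X}$ by $\{\dim\Ext^1 \le d\}$ intersected with $\{\dim\Ext^0 \le 1\}$ (the reverse inequality $\dim\Ext^0 \ge 1$ being automatic, since any nonzero perfect complex on a proper stack has nonzero endomorphisms, or one can argue via the fact that $\mathscr{D}_{\X}$ parametrizes objects with $\Ext^{<0}=0$ so $\Ext^0 \ne 0$).

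Next I would assemble these into the locally-closed claim. The locus $\{\dim\Ext^1(K_{\bar s},K_{\bar s}) \le d\}$ is open in $\mathscr{D}_{\X}$ by upper semicontinuity. The locus $\{\dim\Ext^0(K_{\bar s},K_{\bar s}) \le 1\}$ is likewise open. Their intersection is open; call it $\mathscr{D}' \subset \mathscr{D}_{\X}$. Within $\mathscr{D}'$ we want the condition $\dim\Ext^0 = 1$, i.e. $\dim\Ext^0 \ge 1$, but as noted this is automatic, so in fact $\mathscr{P} = \mathscr{D}'$ is already open in $\mathscr{D}_{\X}$ — or if one is cautious about the automatic lower bound, $\mathscr{P}$ is the open locus $\dim\Ext^0 \le 1$ intersected with the closed condition $\dim\Ext^0 \ge 1$ inside it, hence locally closed. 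Either way the conclusion follows. It remains to check $\mathscr{P}$ is a substack, i.e. closed under pullback and that the property of being a relative point object is fppf-local on $S$: pullback-stability is immediate since all the $\Ext$-dimension conditions are checked at geometric points and geometric points of $S' \to S$ map to geometric points of $S$; fppf-locality holds because $S$-perfectness and the pointwise $\Ext$-vanishing conditions descend, again being geometric-pointwise.

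The main obstacle I expect is making the cohomology-and-base-change / semicontinuity argument genuinely work on the $\Gm$-gerbe $\X$ rather than on a scheme: one needs that $R\HHom_{\X_{\mathscr{D}_{\X}}}(K,K)$ commutes with base change (which is where $S$-perfectness of $K$ and properness of $\X \to \Spec k$ enter) and that $\Ext^i(K_{\bar s},K_{\bar s})$ can be computed as the fibre of a perfect complex on the base. This is standard once one passes, as in the proof of Proposition~\ref{complexes}, to the associated Brauer--Severi variety $Y$ where $\D(\X)^{(1)}$ sits as a semiorthogonal component and Lieblich's theory applies directly; alternatively one invokes the twisted version of cohomology and base change. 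I would cite \cite{lieblich2005moduli} and \cite{stacks-project} for the base-change formalism and treat the semicontinuity step as routine given those inputs. A minor secondary point to get right is the automatic inequality $\dim\Ext^0 \ge 1$ on $\mathscr{D}_{\X}$, which ensures $\mathscr{P}$ really is locally closed (indeed open) rather than merely cut out by one open and one closed condition with no containment; if that inequality is not wanted as an input, one simply keeps the statement as "locally closed," which is all that is asserted.
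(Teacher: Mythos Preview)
Your proposal is correct and follows essentially the same approach as the paper. The paper's proof is much terser: it invokes \cite[\href{https://stacks.math.columbia.edu/tag/0BDL}{Tag 0BDL}]{stacks-project} directly for the statement that the condition $\dim\Hom(K_s,K_s)=1$ cuts out a locally closed substack, and then notes that $\dim\Ext^1(K_s,K_s)\le d$ is open by semicontinuity --- exactly the two ingredients you identify, only packaged via the Stacks Project reference rather than unwound into the two inequalities $\dim\Ext^0\le 1$ and $\dim\Ext^0\ge 1$.
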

\begin{proof}
    The condition that $\Hom(K_s, K_s) = k$ defines a locally closed substack of $\mathscr{D}_{\X}$ by \cite[\href{https://stacks.math.columbia.edu/tag/0BDL}{Tag 0BDL}]{stacks-project}, and by semi-continuity, the condition $\dim(\Ext^1(K_s, K_s)) \leq d$ is an open condition.
\end{proof}

\begin{remark}
    This in turn shows that $\mathscr{P}$ is a $\mathbb{G}_m$-gerbe over its coarse space $P$ since any object in $\mathscr{P}$ is universally simple.
\end{remark}

\begin{proof}[Proof of Theorem \ref{point}]
    Any $\FF \in \mathscr{P}$ defines a morphism of stacks \[\phi \colon \Autt^{0}_{\X} \rightarrow \mathscr{P}, \sigma \mapsto \sigma^* \FF.\]

Now, given any point object $\FF \in \D(\X)^{(1)}$, we show that it has the desired form. Note that it suffices to show the result over a algebraic closure $\bar{k}$ of $k$, so we assume $k = \bar{k}$.

Let $S_{\X} \subset \Aut^0_{\X}$ be the stablizer of the point $[\FF] \in P$. The tangent space of $S_{\X}$ at the origin (the identity morphism) is the kernel of the morphism of tangent spaces induced by $\phi$. Since the tangent space of $\mathscr{P}$ at $\FF$ is $\dim \Ext^1(\FF, \FF) \leq d$ by assumption, we know that the tangent space of $S_{\X}$ as a k-vector space, hence $S_{\X}$ itself as an algebraic space, has dimension some integer $g \geq d$. 

Let $S$ be the image of $S_{\X}^0$ (i.e. the neutral connected component of $S_{\X}$) in $X$ under the map $\Aut_{\X} \rightarrow X$, and let $K$ be the kernel of $S_{\X}^0 \rightarrow S$. Note that $K \subset \Pic^0_X$. Let $g' := \dim{S}$.

The set-theoretic support of $\FF$ on $\X$ corresponds to a closed topological subspace $Z \subset X$, equip it with the reduced structure. We see that $Z$ is invariant under the $S$ action, which is free. Since $S$ is connected and normal hence irreducible, by picking a $k$-point in $Z$, we have embedding of $S$ into any irreducible component of $Z$, which would have dimension $\geq g'$.

Let $Z'$ be some irreducible component of $Z$ with $d' := \dim{Z'} \geq g'$.\\
We know that there is a smooth projective scheme $\tilde{Z}$ that surjects onto $Z'$, we get $\tilde{Z} \rightarrow X$, let $\tilde{T}$ be the albanese torsor of $\tilde{Z}$. Since $X$ is its own albanese torsor, we get $h \colon \tilde{T} \rightarrow X$ with $\dim{\im{h}} \geq d'$.

Taking duals, we get $h^\vee \colon X^\vee \rightarrow \tilde{T}^\vee$, and the dimension of the image of this map is $ \geq d'$, so $\dim{\ker{h^\vee}} \leq d-d'$.

Note also that the neutral connected component of $K$, $K^0 \subset \ker{h^\vee}$. This is because \[\FF \otimes \LL \cong \FF\]
and there exists some $\mathcal{H}^i({\FF})$ that is set-theoretically supported on the generic point of $Z$. So we have 
\[\mathcal{H}^i(\FF)|_{\tilde{Z}} \otimes \LL|_{\tilde{Z}} \cong \mathcal{H}^i(\FF)|_{\tilde{Z}}.\]

Let $\mathcal{H}$ be the quotient of $\mathcal{H}^i({\FF})|_{\tilde{Z}}$ by its torsion subsheaf. Then on the maximal open subset $U \subset \tilde{Z}$ such that $\mathcal{H}|_U$ is locally free (note that the complement of $U \subset \tilde{Z}$ has codimension at least 2,) we have that \[\det{\mathcal{H}|_{U}} \otimes \LL^{\otimes r}|_{U} \cong \det{\mathcal{H}|_{U}}\] which implies that $\LL|_{\tilde{Z}}$ is torsion, hence $\LL|_{\tilde{T}}$ is torsion. 

$K^0 \rightarrow \tilde{T}^\vee$ is continuous, since $K^0$ is connected, its image must only contain the structure sheaf.

So $\dim(K) \leq d-d'$ hence \[g = \dim(S) + \dim(K) = g' + (d-d') \leq d' +(d-d') = d\] with equality if and only if $g' = d'$ and $\dim(K) = d-d'$.
Using the inequality $g \geq d$, we conclude that $d' = g'$ and $Z'$ is a $S$ orbit. Since $\Hom(\FF, \FF) = k$ implies $\FF$ has connected support, $Z$ is connected, hence $Z = Z'$ and $Z$ is a torsor under $S$.

Now, we conclude the forward direction by Proposition \ref{data}.

Conversely, for objects of the form $\FF \cong \tilde{i}_*{\F}[s]$, we show that $\FF$ satisfies the conditions in Theorem \ref{point}.. By Theorem \ref{linebundle}, $\F := \tilde{\pi}_* \LL$ for the base change of some isogeny $\pi: Z' \rightarrow Z$ under $\Z \rightarrow Z$ (so $\FF \cong \tilde{i}_* \tilde{\pi}_* \LL[s]$.) We may assume that $s = 0$. The fact that $\Ext^m(\FF, \FF) = 0$ for $s < 0$ follows from the fact that $\FF$ is concentrated in one degree. 

For the other two conditions, first notice that \[\Ext^m_{\Z}(\tilde{\pi}_* \LL, \tilde{\pi}_* \LL) \cong \Ext^m_{\Z'}(\tilde{\pi}^* \tilde{\pi}_* \LL, \LL) \cong \Ext^m_{\Z'}(\oplus_{a \in (\ker{\pi} \cap Z)} a \cdot \LL, \LL) \cong \Ext_{\Z'}^m(\LL, \LL).\]
The last isomorphism follows from Proposition \ref{inject}.
So we have
\[ \Hom_{\X}(\FF, \FF) \cong \Hom_{\X}(\tilde{i}_* \pi_* \LL[s], \tilde{i}_* \tilde{\pi}_* \LL[s]) \cong \Hom_{\Z}(\tilde{\pi}_* \LL, \tilde{\pi}_* \LL) \cong \Hom_{\Z'}(\LL, \LL) = k.\]
And \[\Ext^1_{\Z}(\tilde{\pi}_* \LL, \tilde{\pi}_* \LL) \cong \Ext_{\Z'}^1(\LL, \LL).\] By \cite{mukai1978semi}, $\Ext^1_{\Z}(\F, \F) \cong \Ext^1(\LL, \LL)$ has dimension $\dim Z' = \dim Z$.

Now, consider the distinguished triangle 

\[\mathcal{H}^{-1}(L \tilde{i}^* \tilde{i}_* \FF) \rightarrow \tau_{\geq -1}(L \tilde{i}^* \tilde{i}_* \FF) \rightarrow \F.\]

We see that $\tilde{i}_* \mathcal{H}^{-1}(L \tilde{i}^* \tilde{i}_* \FF) \cong \mathcal{H}^{-1}(\tilde{i}_* L \tilde{i}^* \tilde{i}_* \FF) \cong \mathcal{H}^{-1}(\tilde{i}_* L \tilde{i}^* \tilde{i}_*\mathcal{O}_Z \otimes \F)$, so \[\mathcal{H}^{-1}(L \tilde{i}^* \tilde{i}_* \FF) \cong \mathcal{H}^{-1}(L \tilde{i}^* \tilde{i}_* \mathcal{O}_Z) \otimes \F \cong \F^{\oplus \codim Z}[1].\]
The last isomorphism follows by Lemma \ref{sigma}.

So applying $\Hom{(-, \FF)}$ to the distinguished triangle, we get the exact sequence \[0 \rightarrow \Ext_{\Z}^1(\F, \F) \rightarrow \Ext_{\X}(\FF, \FF) \rightarrow \Hom_{\Z}(\FF^{\oplus \codim Z}, \F)\]

which implies that $\dim{\Ext_{\X}(\FF, \FF)} \leq d$ where $d = \dim X$.
\end{proof}

\section{Twisted Fourier-Mukai Partners with Abelian Varieties}
One of the main application of classifying point objects is to study Fourier-Mukai partners. In our case, it shows that any twisted Fourier-Mukai partner of an abelian variety is also be an abelian variety of the same dimension.

\subsection{Moduli of Point Objects}

Let $\mathscr{M}$ be the fibered category over $k$ which to any scheme $T$ associates the groupoid of pairs $(Z, \E)$, where $Z \subset X_T$ is a closed subscheme flat over $T$ and $\E$ is a $1$-twisted vector bundle on $\Z := \mathscr{X} \times_X Z$ such that for every geometric point $\bar{t} \hookrightarrow T$, $Z_{\bar{t}} \subset X_{\bar{t}}$ is a torsor under a subabelian variety $S_{\bar t} \subset A_{\bar t}$, and $\E_{\bar t}$ is a simple semi-homogeneous vector bundle on $\Z_t$.

\begin{lemma}
    Let $A$ be an abelian variety, let $Z \xhookrightarrow{} A$ be a closed subscheme of $A$ which is a torsor under some subabelian variety $S \xhookrightarrow{} A$. Let $\rho_S : A/S \rightarrow \Hilb_A$ be the map $\rho_S([a]) = [t_a^*(S)]$, this map is well-defined since $Z$ is a torsor under $S$. Then $\rho_S$ is an open and closed embedding.
\end{lemma}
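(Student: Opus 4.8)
The plan is to exhibit $\rho_S$ as a morphism from $A/S$ to the Hilbert scheme $\Hilb_A$, and then verify that it is a monomorphism which is simultaneously an open and a closed immersion onto its image. First I would check that $\rho_S$ is a well-defined morphism of schemes: the family of translates $\{t_a^*(S)\}_{a\in A}$ is a flat family of closed subschemes of $A$ parametrized by $A$ (it is literally the image of $S \times A \to A \times A$, $(s,a)\mapsto (s+a,a)$, which is a closed immersion since $S \hookrightarrow A$ is), so by the universal property of $\Hilb_A$ it defines a morphism $A \to \Hilb_A$; since $t_a^*(S) = t_{a'}^*(S)$ exactly when $a - a' \in S$, this morphism factors through $A/S$ and the induced map $\rho_S\colon A/S \to \Hilb_A$ is a monomorphism on points. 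One should note $Z$ is a torsor under $S$ only to guarantee that the relevant component of $\Hilb_A$ in which $[Z]$ lies is the one hit by $\rho_S$; the map itself only uses $S$.

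Next I would argue that $\rho_S$ is a closed immersion. The key point is that $A/S$ is proper (it is an abelian variety), so $\rho_S$ is proper; a proper monomorphism is a closed immersion (it is unramified and proper, hence finite, and a finite monomorphism is a closed immersion — cf. \cite[\href{https://stacks.math.columbia.edu/tag/04XV}{Tag 04XV}]{stacks-project}). To see $\rho_S$ is a monomorphism of schemes, not just injective on $k$-points, one computes the tangent map: the tangent space to $\Hilb_A$ at $[t_a^*(S)]$ is $H^0(t_a^*(S), N_{t_a^*(S)/A})$, and since $t_a^*(S)$ is a translate of the subabelian variety $S$ the normal bundle is trivial, $N_{t_a^*(S)/A} \cong \mathcal{O}_{S}\otimes (A/S)_{0}$ (the tangent space of $A/S$ at the origin), so $H^0 \cong T_{0}(A/S)$ and the tangent map of $\rho_S$ is an isomorphism onto this; hence $\rho_S$ is unramified, and being also a proper radicial (injective on points, isomorphism on residue fields) morphism it is a closed immersion.

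Finally, to see the image is also open, it suffices to show $\rho_S$ is flat (a flat closed immersion of finite presentation is an open immersion onto a connected component, cf. \cite[\href{https://stacks.math.columbia.edu/tag/04PU}{Tag 04PU}]{stacks-project}), equivalently that $\dim_{[t_a^*(S)]}\Hilb_A = \dim A/S$ and $\Hilb_A$ is smooth at these points. This follows from the tangent space computation above together with the standard obstruction-theoretic fact that the obstruction to deforming $t_a^*(S)$ lies in $H^1(t_a^*(S), N_{t_a^*(S)/A}) = H^1(S, \mathcal{O}_S)\otimes T_0(A/S)$; although this group need not vanish, the translates of $S$ already form a $\dim(A/S)$-dimensional family whose tangent space exhausts $H^0(N)$, so the local ring of $\Hilb_A$ at $[t_a^*(S)]$ is regular of dimension $\dim A/S$ and $\rho_S$ identifies $A/S$ with the reduced component through $[t_a^*(S)]$, which is then open and closed. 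The main obstacle is this last smoothness/openness point: one must argue that the actual component of $\Hilb_A$ containing these translates has no "extra" infinitesimal directions beyond translation, i.e. that the family of translates is versal at each of its points; the cleanest route is to observe that $A$ acts on $\Hilb_A$, this action has $[S]$ with stabilizer $S$ (as schemes, using the tangent computation to see the stabilizer is reduced), so the orbit $A\cdot[S]\cong A/S$ is a locally closed smooth subscheme, and the tangent-space equality $T_{[S]}(A\cdot[S]) = T_{[S]}\Hilb_A$ forces the orbit to be open in its closure, hence (being also proper) open and closed in $\Hilb_A$.
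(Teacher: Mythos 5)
Your proposal is correct and follows essentially the same route as the paper: injectivity on points plus the tangent-space identification $T_{[Z]}\Hilb_A = H^0(Z,\N_{Z/A})$, which has dimension $\dim(A/S)$ because the normal bundle of a translate of $S$ is trivial (the paper extracts the same conclusion from the long exact sequence of the normal sequence rather than from triviality of $\N_{Z/A}$ directly). Your final step — using the tangent-space equality together with the $A$-orbit to see that $\Hilb_A$ is smooth of dimension $\dim(A/S)$ at these points, so the image is a connected component — is a welcome explicit justification of the openness that the paper leaves implicit.
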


\begin{proof}
    We see that since $S$ is precisely the stablizer of $[Z]$, so $\rho_S$ is a monomorphism on $k$-points. To show it is an embedding, we show that the map induced on the tangent space $T_{\rho_{S}}$ is an isomorphism. By the normal sequence
    \[0 \rightarrow \mathcal{T}_Z \rightarrow \mathcal{T}_{A}|_Z \rightarrow \N_{Z/A} \rightarrow 0\]
    where $\N_{S/A}$ is the normal bundle of $S \subset A$.

    Since we also have that the tangent space of $\Hilb_A$ at $[Z]$ is $H^0(Z, \N_{Z/A})$ and the tangent space of $A/S$ at every $k$-point is $H^0(Z, \mathcal{T}_A|_Z) / H^0(Z, \mathcal{T}_Z)$.  

    So taking the long exact sequence of the normal sequence, we see that the map $H^0(Z, \mathcal{T}_A|_Z) / H^0(Z, \mathcal{T}_Z) \rightarrow H^0(Z, \N_{Z/A})$ is the map on the tangent spaces induced by $\rho$. By the long exact sequence we also see that, to conclude, it suffices to show \[H^1(Z, \mathcal{T}_Z) \rightarrow H^1(Z, \mathcal{T}_A|_Z)\] is injective, which can be seen by noticing this is the map gotten by applying $- \otimes_k H^1(Z, \mathcal{O}_Z)$ to the injection $T_x(Z) \xhookrightarrow{} T_x(A)$.
\end{proof}

\begin{remark}
    This shows that for each connected component of $\mathscr{M}$, there exists a subabelian variety $S \rightarrow A$ and $\delta \in NS(S)_{\mathbb{Q}}$ such that for any pairs $(Z, \E) \in \mathscr{M}(k)$ we have $Z$ is a torsor under $S$ and $\delta(\E) = \delta$. In other words, we have that \[\mathscr{M} = \coprod_{(S, \delta)} \mathscr{M}_{(S, \delta)}.\]
    
\end{remark}


\begin{proposition}
    $\mathscr{M}_{(S, \delta)}$ is an algebraic stack.
\end{proposition}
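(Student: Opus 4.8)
The plan is to realize $\mathscr{M}_{(S,\delta)}$ as an algebraic stack by exhibiting it as a locally closed substack of an already-known algebraic moduli stack, and then checking that the defining pointwise conditions (``$Z$ is a torsor under $S$'' and ``$\E$ is simple semi-homogeneous with $\delta(\E) = \delta$'') cut out a locally closed subfunctor. First I would handle the $Z$-component: by the previous lemma, the locus of closed subschemes $Z \subset X_T$ which, fiberwise over $T$, are torsors under the fixed subabelian variety $S$, forms an open and closed subscheme $H_{(S)} \subset \Hilb_X$ isomorphic to a torsor under $X/S$ — indeed for a fixed geometric point this is the image of the embedding $\rho_S$, and one checks this locus is open and closed in the relative Hilbert scheme by openness/closedness of $\rho_S$ fiberwise plus constructibility. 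So the $Z$-part of $\mathscr{M}_{(S,\delta)}$ lives over an honest scheme $H_{(S)}$, and over $H_{(S)}$ there is a universal family $\mathcal{Z} \hookrightarrow X \times H_{(S)}$, hence a universal gerbe $\mathscr{Z} := \mathscr{X} \times_X \mathcal{Z} \to \mathcal{Z}$.

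Next I would bring in a moduli stack of $1$-twisted sheaves on the fibers of $\mathscr{Z} \to H_{(S)}$. The fibers of $\mathcal{Z} \to H_{(S)}$ are smooth projective (they are torsors under $S$), so one can use the moduli stack of relatively perfect $1$-twisted complexes $\mathscr{D}_{\mathscr{Z}/H_{(S)}}$ — this is algebraic by the same argument as Proposition \ref{complexes} (semi-orthogonal decomposition into a Brauer--Severi variety, reduction to Lieblich's $\mathscr{D}_Y$). Then $\mathscr{M}_{(S,\delta)}$ maps to $\mathscr{D}_{\mathscr{Z}/H_{(S)}}$ by $(Z, \E) \mapsto (Z, \E)$, and I claim this is a locally closed immersion. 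The conditions to impose are: (i) $\E$ is a vector bundle (flat with fibers locally free — an open condition, cf. \cite[Tag 045G]{stacks-project}, using relative perfectness); (ii) $\E$ is fiberwise simple, i.e. $\Hom(\E_{\bar t}, \E_{\bar t}) = k$ — a locally closed condition by \cite[Tag 0BDL]{stacks-project} (or semicontinuity of $\hom$, combined with the fact that $\hom \geq 1$ always, so $\hom = 1$ is the closed-in-open locus); (iii) $\E_{\bar t}$ is semi-homogeneous; (iv) $\delta(\E_{\bar t}) = \delta$ in $NS(S_{\bar t})_{\mathbb{Q}}$.

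For (iv), $\delta(\E) = \det(\E)/\rk(\E)$ is locally constant in families because $\det(\E)$ defines a section of the relative Picard scheme and $\rk$ is locally constant, so fixing $\delta$ is an open-and-closed condition once one has chosen the right component. For (iii), the key is Proposition 2.17 of the excerpt: $\E_{\bar t}$ is semi-homogeneous iff $\dim \Phi^0_{\mathscr{Z}_{\bar t}}(\E_{\bar t}) = \dim S$; since the groups $\Phi^0$ fit into a family (they are defined via the $\underline{Isom}$ algebraic space, which is of finite presentation over the base), $\dim \Phi^0$ is upper semicontinuous, and combined with the fact that semi-homogeneity is equivalent to a decomposition statement (Theorem \ref{summand}) that is itself locally closed — alternatively one can phrase semi-homogeneity via Theorem \ref{linebundle}'s criterion — one gets that semi-homogeneity is a locally closed (in fact, on the component with fixed $\delta$, open) condition. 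I expect the main obstacle to be precisely item (iii): making rigorous that ``semi-homogeneous'' is a locally closed condition in a family requires either a careful semicontinuity argument for $\dim \Phi^0_{\mathscr{Z}_t}(\E_t)$ — which means controlling the family of algebraic spaces $\underline{Isom}$ and their images over a non-algebraically-closed-in-general base, and knowing the dimension can only jump up under specialization — or an argument via the structural Theorem \ref{summand}/\ref{linebundle} showing that the simple semi-homogeneous bundles with fixed $\delta$ form a bounded family parametrized by something like a quotient of $X^\vee$. Once all four conditions are verified to be locally closed, their intersection inside $\mathscr{D}_{\mathscr{Z}/H_{(S)}}$ is locally closed, hence algebraic, and that intersection is exactly $\mathscr{M}_{(S,\delta)}$.
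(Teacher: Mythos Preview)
Your overall framework matches the paper's: map $\mathscr{M}_{(S,\delta)}$ to the base $A/S$ (your $H_{(S)}$ is exactly this via the lemma on $\rho_S$), pull back the universal family, and realize $\mathscr{M}_{(S,\delta)}$ as a locally closed substack of a moduli stack of twisted sheaves on the universal $\mathscr{Z}$. The paper goes directly to Lieblich's stack of simple $1$-twisted vector bundles with fixed $\delta$ rather than through the larger $\mathscr{D}_{\mathscr{Z}/H_{(S)}}$, but that is only a cosmetic difference; your conditions (i), (ii), (iv) are handled the same way.

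The genuine gap is your condition (iii), and you correctly flag it as the main obstacle. Neither of your proposed routes is complete: upper semicontinuity of $\dim \Phi^0_{\mathscr{Z}_t}(\E_t)$ is not obvious, because $\Phi^0$ is only defined as the \emph{image} of a morphism from $\underline{\mathrm{Isom}}$ to $\Aut^0$, and fiber dimensions of images need not be upper semicontinuous without further work; and the appeal to Theorems \ref{summand}/\ref{linebundle} is left vague. The paper sidesteps this entirely with a cleaner observation you are missing: once $\E_{\bar t}$ is a \emph{simple} $1$-twisted vector bundle on a torsor under $S$, Theorem \ref{point} (both directions) says $\E_{\bar t}$ is semi-homogeneous if and only if $\dim\Ext^1(\E_{\bar t},\E_{\bar t}) \le \dim S$. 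This turns semi-homogeneity into an upper bound on $\dim\Ext^1$, which is an open condition by ordinary semicontinuity. So after restricting to the locus where (i), (ii), (iv) hold, $\mathscr{M}_{(S,\delta)}$ is simply the open substack cut out by $\dim\Ext^1 \le \dim S$, and you are done.
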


\begin{proof}
    By the discussion above, we see that there is a map $\mathscr{M}_{(S, \delta)} \rightarrow A/S$. Let $Y$ be the universal object of $A/S$, which is, a closed subscheme of $A \times_k A/S$. Let $\Y = \A \times_A Y$. 
    
    Let $\mathscr{V}$ be the fibered category over $A/S$ which to any $T \rightarrow A/S$ associates the groupoid of simple $1$-twisted vector bundles $\E$ on $\Y_T$ such that $\delta(\E_t) = \delta$ for all geometric points $t \rightarrow T$. By \cite[Proposition 2.3.1.1]{lieblich2007moduli} and \cite[\href{https://stacks.math.columbia.edu/tag/0BDL}{Tag 0BDL}]{stacks-project}, $\mathscr{V}$ is an algebraic stack over $A/S$, hence an algebraic stack over $k$.

    We see that $\mathscr{M}_{(S, \delta)} \rightarrow \mathscr{V}$ is a monomorphism over $A/S$. By Theorem \ref{point}, a simple $1$-twisted vector bundle on $S_t$ is semi-homogeneous if and only if $\dim\Ext^1(\E, \E) \leq \dim S_t$. Semi-continuity implies that $\mathscr{M}_{(S, \delta)}$ is an open substack of $\mathscr{V}$, which then is an algebraic stack over $k$.
\end{proof}

\begin{proposition}
    Assume $k = \bar{k}$. When there exists a simple vector bundle $\E$ such that $\delta(\E) = \delta$ on $S$, the stack $\mathscr{M}_{(S, \delta)}$ is a $\mathbb{G}_m$-gerbe over an abelian variety $M_{(S, \delta)}$ which is an extension
    \[0 \rightarrow S^\vee/\Phi_0 \xrightarrow{d_1} M_{(S, \delta)} \xrightarrow{d_2} A/S \rightarrow 0\]
    where $\Phi_0 := \Phi_0(\E)$ a finite subgroup scheme of $S^\vee$.
\end{proposition}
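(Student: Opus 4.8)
The plan is to study the morphism $d_2 \colon \mathscr{M}_{(S,\delta)} \to A/S$ sending a pair $(Z,\E)$ to the class $[Z]$ (this exists via the map $\rho_S$ of the lemma above together with the discussion following it), show first that $\mathscr{M}_{(S,\delta)}$ is a $\mathbb{G}_m$-gerbe over a scheme $M_{(S,\delta)}$, then that $d_2$ descends to a torsor $M_{(S,\delta)} \to A/S$ under $(S^\vee/\Phi_0)_{A/S}$, and finally equip $M_{(S,\delta)}$ with a group structure realizing it as the asserted extension. The gerbe statement is the easy part: every object of $\mathscr{M}_{(S,\delta)}$ is a simple semi-homogeneous $1$-twisted vector bundle, and such a bundle is \emph{universally} simple (simplicity of a semi-homogeneous bundle is preserved by base field extension, as in \cite{mukai1978semi}), so the $\mathcal{O}$-algebra $\End$ of the universal bundle is trivial and the same argument used for $\mathscr{P}$ in Section~5 shows $\mathscr{M}_{(S,\delta)} \to M_{(S,\delta)}$ is a $\mathbb{G}_m$-gerbe. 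The hypothesis that some simple vector bundle with $\delta(\E)=\delta$ exists on $S$ guarantees (again by \cite{mukai1978semi}) that $\mathscr{M}_{(S,\delta)}$ is nonempty, so the torsor below is genuine.

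Next I would describe the fibers of $d_2$. Fix $[Z]\in(A/S)(k)$ and the corresponding torsor $Z\subset X$ with $\mathscr{Z}=\mathscr{X}\times_X Z$; then $d_2^{-1}([Z])$ is the moduli of simple semi-homogeneous $1$-twisted bundles $\E$ on $\mathscr{Z}$ with $\delta_{\mathscr{Z}}(\E)=\delta$ (where $NS(Z)\otimes\mathbb{Q}\cong NS(S)\otimes\mathbb{Q}$ by translation invariance). By Corollary~\ref{relation}, any two such bundles differ by tensoring with a $0$-twisted line bundle in $\Pic^0(Z)=S^\vee$, and $\E\otimes\M\cong\E$ exactly when $\M\in\Phi_0(\E)$; since all such $\E$ are mutually related by twists, $\Phi_0(\E)$ is independent of $\E$, equal to a fixed finite subgroup scheme $\Phi_0\subset S^\vee$, and (using that torsors under $S$ inside $X$ are translates of one another, with the gerbes identified by a lift of the translation) it is also independent of $[Z]$. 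Hence $d_2^{-1}([Z])$ is set-theoretically a torsor under $S^\vee/\Phi_0$. To make this scheme-theoretic I would tensor the universal bundle by the Poincaré bundle on $\mathscr{Z}^{\mathrm{univ}}\times_{A/S}(S^\vee)_{A/S}$ (a $0$-twisted sheaf), obtaining an action of $(S^\vee)_{A/S}$ on $M_{(S,\delta)}$ over $A/S$ which is transitive on fibers with stabilizer $(\Phi_0)_{A/S}$; thus $d_2 \colon M_{(S,\delta)}\to A/S$ is a torsor under $(S^\vee/\Phi_0)_{A/S}$. In particular $M_{(S,\delta)}$ is smooth and proper over $k$, connected (base and fibers are connected), of dimension $\dim(A/S)+\dim S^\vee=\dim A$.

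It remains to produce the group structure and identify the extension, which I expect to be the main obstacle. My preferred route is descent from Mukai's untwisted theory: choose an isogeny $\pi\colon B\to X$ for which $\mathscr{B}:=\mathscr{X}\times_X B\cong B\mathbb{G}_{m,B}$ is trivial, let $S_B\subset B$ be a subabelian variety mapped isogenously to $S$, and use pullback and pushforward along the induced $\tilde{\pi}$, together with Proposition~\ref{pullback} and Theorem~\ref{linebundle}, to relate $\mathscr{M}_{(S,\delta)}$ with the (untwisted) moduli $M^B$ of simple semi-homogeneous bundles on $B$ with the corresponding numerical class $\delta_B$ over $B/S_B$. The results of \cite{mukai1978semi} give that $M^B$ is an abelian variety, an extension $0\to S_B^\vee/\Phi_0^B\to M^B\to B/S_B\to 0$; transporting this group structure through the $\ker\pi$-equivariant correspondence between the two moduli problems, and checking compatibility with the translation action of $A/S$, yields a group structure on $M_{(S,\delta)}$ for which $d_2$ is a homomorphism. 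Its kernel is the fiber over $0\in A/S$, which by the previous paragraph is $S^\vee/\Phi_0$; the inclusion of this kernel is the map $d_1$ and the sequence is exact, giving the extension. The delicate points where most of the work lies are (i) making the correspondence $\mathscr{M}_{(S,\delta)}\leftrightarrow M^B$ precise enough — matching connected components, the classes $\delta$, $\delta_B$, and the finite groups $\Phi_0$, $\Phi_0^B$ — to transport the group law, and (ii) if one instead argues directly, verifying that the natural ``Pontryagin'' multiplication $(Z_1,\E_1),(Z_2,\E_2)\mapsto$ (a suitable cohomology sheaf of $R\mu_*(\E_1\boxtimes\E_2)$ along the addition map $\mu\colon\mathscr{Z}_1\times\mathscr{Z}_2\to\mathscr{Z}_1{*}\mathscr{Z}_2$) lands back in $\mathscr{M}_{(S,\delta)}$, is concentrated in one degree, and is associative. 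This is the twisted analogue of Mukai's semi-homogeneous Pontryagin-product analysis, and either route requires care with the $\mathbb{G}_m$-gerbe twist and with the non-canonicity of the resulting origin of $M_{(S,\delta)}$.
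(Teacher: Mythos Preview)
Your core argument---that $\mathscr{M}_{(S,\delta)}$ is a $\Gm$-gerbe over its coarse space because objects are universally simple, that $\Phi_0$ is independent of the choice of $\E$ and of $[Z]$ via Corollary~\ref{relation}, and that tensoring by $\Pic^0$ gives a transitive $(S^\vee/\Phi_0)$-action on the fibers of $d_2$---is exactly the paper's argument, only spelled out more carefully (the paper invokes Corollary~\ref{relation} and \cite[Lemma 1.13]{lane2024semi} and leaves the rest to the reader).

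Where you diverge is in the last step. You correctly flag the group structure on $M_{(S,\delta)}$ as the subtle point and propose two substantial routes (descent from Mukai's untwisted moduli, or a twisted Pontryagin product). The paper does neither: its proof simply speaks of ``the kernel'' of $d_2$ as ``a closed subgroup algebraic space of $M_{(S,\delta)}$'' without ever constructing the group law, and the argument it actually carries out only establishes that $d_2$ is a torsor under $S^\vee/\Phi_0$ over $A/S$. Indeed, the immediately following Corollary retreats to the weaker assertion that each $M_{(S,\delta)}$ is a \emph{torsor} under an abelian variety, and this is all that is used downstream (dimension $=\dim A$ and Theorem~\ref{partner}). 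So your proposed detours, while legitimate ways to nail down the extension statement as literally written, go well beyond what the paper itself proves; if you are content to match the paper, you can stop once you have the $(S^\vee/\Phi_0)$-torsor structure over $A/S$ and the $k$-point coming from the hypothesized $\E$.
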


\begin{proof}



    Corollary \ref{relation} shows that $\Phi_0(\E) = \Phi_0(\E')$ for $\delta(\E) = \delta(\E')$, which makes $\Phi_0$ a well-defined notion.

     Over any $T$, $\tilde{d_2}: \mathscr{M}_{(S, \delta)} \rightarrow A/S$ maps $(Z, \E)$ to $Z$ which is surjective (locally it is an epimorphism by assumption). $\tilde{d_2}$ induces a surjection $d_2 : M_{(S, \delta)} \xrightarrow{d_2} A/S$ whose kernel is a closed subgroup algebraic space of $M_{(S, \delta)}$. We know that $\ker{d_2}(k) = \{\text{semi-homogeneous vector bundles} \E \text { with } \delta(\E) = \delta \text{ on }S\}$ and that $\ker{d_2}$ is reduced since it is a group algebraic space in characteristic 0.

    We see that $S^{\vee}/\Phi_0$ acts on $\ker{d_2}$ by tensoring, this action is transitive on $k-$points by Corollary \ref{relation}, which shows that $\ker{d_2}$ is a torsor under $S^{\vee}/\Phi_0$ (for example, by \cite[Lemma 1.13]{lane2024semi}.)

\end{proof}

\begin{corollary}
    In general, every connected component of $\mathscr{M}$, $\mathscr{M}_{(S, \delta)}$ is a $\Gm$-gerbe over $M_{(S, \delta)}$, a torsor under an abelian variety.
\end{corollary}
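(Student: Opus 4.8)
The plan is to reduce the general (not necessarily algebraically closed) case to the algebraically closed case treated in the previous proposition, using Galois descent. First I would fix a connected component $\mathscr{M}_{(S,\delta)}$ of $\mathscr{M}$; by the remark following the Hilbert-scheme lemma this is well-defined, being cut out by the discrete data of a subabelian variety $S \hookrightarrow A$ and a class $\delta \in NS(S)_{\mathbb{Q}}$. Base changing to $\bar k$, the previous proposition tells us that $\mathscr{M}_{(S,\delta),\bar k}$ is a $\Gm$-gerbe over an abelian variety $M_{(S,\delta),\bar k}$ sitting in the displayed extension of $A_{\bar k}/S_{\bar k}$ by $S^{\vee}_{\bar k}/\Phi_0$.

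Next I would observe that $\mathscr{M}_{(S,\delta)}$ is already known to be an algebraic stack over $k$ (by the proposition two statements earlier), and that every object of it is universally simple, so that $\mathscr{M}_{(S,\delta)} \to M_{(S,\delta)}$ is a $\Gm$-gerbe over its coarse space for purely formal reasons (coarse spaces exist and commute with flat base change for such stacks, and the $\Gm$-rigidification argument is exactly as in the Remark after the lemma that $\mathscr{P}$ is locally closed in $\mathscr{D}_{\X}$). The point is that formation of the coarse space commutes with the base change $\Spec \bar k \to \Spec k$, so $M_{(S,\delta),\bar k} = (M_{(S,\delta)})_{\bar k}$. Hence $M_{(S,\delta)}$ is a smooth proper geometrically connected $k$-scheme whose base change to $\bar k$ is an abelian variety; by standard descent (a smooth proper geometrically connected variety over $k$ becoming an abelian variety over $\bar k$ is a torsor under an abelian variety over $k$ — namely its Albanese, or equivalently $\mathrm{Pic}^0$ of its Albanese dual), $M_{(S,\delta)}$ is a torsor under an abelian variety over $k$. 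The extension structure need not descend literally since $M_{(S,\delta)}$ may lack a rational point, but the torsor statement is exactly what is claimed.

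I would then assemble the pieces: $\mathscr{M} = \coprod_{(S,\delta)} \mathscr{M}_{(S,\delta)}$ by the earlier remark, and each $\mathscr{M}_{(S,\delta)}$ is a $\Gm$-gerbe over the torsor $M_{(S,\delta)}$ by the above. This is precisely the assertion of the corollary.

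The main obstacle is the bookkeeping around the existence of a rational point: over $\bar k$ the existence of a simple vector bundle $\E$ with $\delta(\E)=\delta$ gives the honest exact sequence, but over a general $k$ such an $\E$ need not exist, so one cannot directly produce the extension of group schemes; the resolution is to argue only at the level of coarse spaces and invoke descent of the abelian-variety-torsor property, which is insensitive to rational points. A secondary point to be careful about is checking that formation of the coarse moduli space of $\mathscr{M}_{(S,\delta)}$ commutes with the field extension $k \subset \bar k$ — this holds because $\mathscr{M}_{(S,\delta)}$ is a $\Gm$-gerbe (so its coarse space agrees with its $\Gm$-rigidification, whose formation is compatible with arbitrary base change) and because coarse spaces of finite-type Deligne–Mumford-type stacks commute with flat base change on the base.
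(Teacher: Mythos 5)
Your proposal is correct and is essentially the argument the paper leaves implicit: the corollary carries no proof, and the intended justification is exactly Galois descent from the preceding proposition, using that rigidification/coarse-space formation for these $\Gm$-gerbes commutes with the base change $k \subset \bar{k}$ and that a smooth proper geometrically connected $k$-variety whose base change is an abelian variety is a torsor under its Albanese. The only points you (like the paper) leave tacit are that nonemptiness of the component over $\bar{k}$ supplies the hypothesis of the proposition (a pair $(Z,\E)$ with $Z$ a torsor under $S_{\bar k}$ can be translated to $S_{\bar k}$ itself, since $X_{\bar k} \cong A_{\bar k}$), and that the geometric connectedness of $\mathscr{M}_{(S,\delta)}$ rests on the pair $(S,\delta)$ being Galois-invariant — a $k$-connected component whose geometric components are permuted by $\mathrm{Gal}(\bar k/k)$ would have geometrically disconnected coarse space, so the indexing by a $k$-rational $(S,\delta)$ is genuinely being used.
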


\begin{remark}
    Note that this proposition shows, in particular, that $\dim(M_{(B, \delta)}) = \dim A$.
\end{remark}

\subsection{Twisted FM Partners}
The universal object on $\X \times_k \mathscr{M}_{(S, \delta)}$ is $(1,1)$-twisted, hence induces a functor $\Psi:  \D(\mathscr{M}_{(S, \delta)})^{(-1)} \rightarrow \D(\mathscr{X})^{(1)}$.

\begin{remark}
    By the discussion below \cite[Definition 2.1.2.2]{lieblich2005moduli}, for any $X$, the category of $1$-twisted coherent sheaves on some $\Gm$-gerbe over $X$ corresponding to $\alpha \in \Br(X)$ is equivalent to the $-1$-twisted coherent sheaves on the $\Gm$-gerbe over $X$ corresponding to $\alpha^{-1}$.
\end{remark}

\begin{proposition}
\label{equivalence}
    $\Psi$ is an equivalence.
\end{proposition}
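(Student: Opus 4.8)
\subsection*{Proof proposal}

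The plan is to verify the hypotheses of the twisted (stacky) form of Bridgeland's full-faithfulness criterion for the Fourier--Mukai functor $\Psi$, and then to promote full faithfulness to an equivalence using that $\D(\X)^{(1)}$ is indecomposable. Since $\Psi$ is a Fourier--Mukai functor whose kernel is compatible with base change, and since full faithfulness and essential surjectivity descend along the faithfully flat extension $\bar k/k$, I would first reduce to the case $k=\bar k$, where Theorem \ref{point}, Corollary \ref{relation}, and the structure results on $\mathscr{M}_{(S,\delta)}$ are available.

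Next I would identify the images of the twisted skyscrapers. Let $P$ be the universal $(1,1)$-twisted sheaf on $\X\times_k\mathscr{M}_{(S,\delta)}$ and let $m=(Z_m,\E_m)$ be a $\bar k$-point of $\mathscr{M}_{(S,\delta)}$, to which (over $\bar k$) corresponds a genuine $(-1)$-twisted skyscraper $k(m)$ on the coarse space. Since $P$ is $\mathscr{M}_{(S,\delta)}$-flat, $\Psi(k(m))\cong\tilde i_{Z_m,*}\E_m$, the point object attached to $m$ by the converse direction of Theorem \ref{point}; in particular $\End_{\X}(\Psi(k(m)))=k$ and $\Ext^i_{\X}(\Psi(k(m)),\Psi(k(m)))=0$ for $i<0$, and by Serre duality on the smooth proper stack $\X$ (whose canonical bundle is trivial) also for $i>d$. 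For orthogonality, take $m\neq m'$ in $\mathscr{M}_{(S,\delta)}(\bar k)$. If $Z_m\neq Z_{m'}$, these are distinct translates of the abelian subvariety $S\subset A$, hence disjoint, so $\tilde i_{Z_m,*}\E_m$ and $\tilde i_{Z_{m'},*}\E_{m'}$ have disjoint supports and all $\Ext^*_{\X}$ between them vanish. If $Z_m=Z_{m'}=:Z$ but $\E_m\not\cong\E_{m'}$, then the conormal bundle of $Z$ in $X$ is trivial (as $Z$ is a torsor under an abelian subvariety of $A$), so, as in the computation around Lemma \ref{sigma}, $L\tilde i_Z^*\tilde i_{Z,*}\E_m\cong\bigoplus_j\E_m^{\oplus\binom{c}{j}}[j]$ with $c=\codim(Z,X)$; by adjunction $\Ext^*_{\X}(\tilde i_{Z,*}\E_m,\tilde i_{Z,*}\E_{m'})$ is built from copies of $\Ext^*_{\Z}(\E_m,\E_{m'})$, which vanishes by Corollary \ref{relation} applied on the $\Gm$-gerbe $\Z$ over $Z$ (both $\E_m,\E_{m'}$ are simple semi-homogeneous with equal $\delta$, since $m,m'$ lie in the component indexed by $(S,\delta)$). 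Bridgeland's criterion then yields that $\Psi$ is fully faithful.

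To conclude, being a Fourier--Mukai functor between smooth proper stacks, $\Psi$ has both adjoints, so its essential image $\mathcal A:=\Psi(\D(\mathscr{M}_{(S,\delta)})^{(-1)})$ is an admissible subcategory of $\D(\X)^{(1)}$. Since the Serre functor of $\D(\X)^{(1)}=\D(X,\alpha)$ is the shift $[d]$ (as $X$ is an abelian torsor with trivial canonical bundle, and the gerbe does not affect this), the triangulated subcategory $\mathcal A$ is trivially Serre-stable, so a standard Serre-duality argument gives $\mathcal A^\perp={}^\perp\mathcal A$ and hence a completely orthogonal decomposition $\D(\X)^{(1)}=\mathcal A\times\mathcal A^\perp$. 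But $\D(X,\alpha)$ is indecomposable because $X$ is connected (it is the derived category of modules over an Azumaya algebra with centre $\mathcal O_X$, so $\mathrm{HH}^0=k$), whence $\mathcal A^\perp=0$ and $\Psi$ is an equivalence.

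The step I expect to be the main obstacle is making the twisted/stacky Bridgeland criterion precise in this setting. One clean route is to transport everything to the untwisted case via the Brauer--Severi semi-orthogonal decomposition used in the proof of Proposition \ref{complexes}: realize $\D(\X)^{(1)}$ and $\D(\mathscr{M}_{(S,\delta)})^{(-1)}$ as semi-orthogonal components of the derived categories of Brauer--Severi varieties, promote $P$ to a Fourier--Mukai kernel between the ambient categories, and then transport the vanishing computations above and the indecomposability input along these embeddings. A secondary point requiring care is the twist bookkeeping in the identification $\Psi(k(m))\cong\tilde i_{Z_m,*}\E_m$.
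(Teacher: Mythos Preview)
Your proposal is correct and follows essentially the same route as the paper: reduce to $\bar k$, verify the Bridgeland-type orthogonality conditions on twisted skyscrapers so that $\Psi$ is fully faithful, and then upgrade to an equivalence using that the Serre functor on both sides is the shift $[d]$. The paper cites \cite[Theorem~A.40]{lane2024semi} for the twisted Bridgeland criterion and \cite[Corollary~1.56]{huybrechts2006fourier} for the Serre-functor step; your admissibility/indecomposability argument is exactly the content of the latter. The only cosmetic difference is that for the same-support case you invoke Corollary~\ref{relation} directly, whereas the paper phrases the vanishing via Lemma~\ref{Ext} and Example~\ref{example}; since Corollary~\ref{relation} is itself deduced from Lemma~\ref{Ext}, the two are equivalent.
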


\begin{proof}
    It suffices to show the statement over $\bar{k}$, without loss of generality, we assume $k = \bar{k}$, and therefore $X \cong A$ as abelian varieties.
    
    By Example \ref{example}, we see that for any simple semi-homogeneous $\E$ and $\E'$ with $\delta(\E) = \delta(\E')$ on some $S$ torsor $Z \subset A$, we can define $\Sigma(\E)$ and $\Sigma(\E')$ via the same isogeny $\pi: S' \rightarrow S$. Hence Lemma \ref{Ext} applies. By deformation theory and Lemma \ref{Ext}, \cite[Theorem A. 40]{lane2024semi} implies that $\Psi$ is fully faithful (since $\Psi(k(x) \otimes \chi_M^{-1})$ is such a point object $\E$.) 
    
    By the same proof as in \cite[Remarks 3.37 (ii)]{huybrechts2006fourier}, we see that $[-\dim A]$ shifting by the dimension of $A$ defines a Serre funtor in both $\D(\mathscr{A})^{(1)}$ and $\D(\mathscr{M}_{(S, \delta)})^{(-1)}$, and our $\Psi$ respects the two Serre functors. Hence by \cite[Corollary 1.56]{huybrechts2006fourier}, $\Psi$ is an equivalence.
\end{proof}

\begin{theorem}
\label{partner}
    Suppose $Y$ is a smooth projective variety over $k$, $\Y$ a $\mathbb{G}_m-$gerbe over $Y$ corresponding to $\beta \in \Br(Y)$, $A/k$ an abelian variety, $X/k$ a torsor under $A$, and $\X$ a $\mathbb{G}_m-$gerbe over $X$ corresponding to $\alpha \in \Br(X)$.  If there is an equivalence $\Gamma: \D(Y, \beta) \cong \D(X, \alpha)$, then $Y \cong M_{(S, \delta)}$ for some subabelian variety $S \subset A$ and $\delta \in NS(A) \otimes \mathbb{Q}$. In particular, $Y$ is a torsor under an abelian variety of dimension $\dim{A}$.
\end{theorem}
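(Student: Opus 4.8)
The plan is to combine the classification of point objects (Theorem \ref{point}) with the moduli-theoretic machinery built in Section 6. The key observation is that an equivalence $\Gamma \colon \D(Y,\beta) \cong \D(X,\alpha)$ must send point objects to point objects, so skyscraper sheaves $k(y) \otimes \chi_{\Y}^{-1}$ on $\Y$ (which are point objects on the trivial-gerbe-pieces, corresponding to $Z = \{y\}$, $S = 0$) are carried to point objects on $\X$. By Theorem \ref{point}, each such image is of the form $\tilde{i}_* \E[r]$ for a torsor $Z \subset X$ under a subabelian variety $S \subset A$ and a semi-homogeneous vector bundle $\E$ on $\Z$; moreover, since $k(y)$ is simple and the image must be simple, $\E$ is a simple semi-homogeneous vector bundle. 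Thus $\Gamma$ determines a point of $\mathscr{M}(k)$ — or better, a point of the coarse space — for each $y \in Y(k)$.

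First I would make this assignment functorial. One constructs a morphism $Y \to \mathscr{M}$ (really to the coarse space, or a lift to the gerbe) by spreading out: apply $\Gamma$ to the structure sheaf of the diagonal, i.e.\ to the universal family of skyscrapers $\mathcal{O}_{\Delta}$ viewed as a relative point object over $Y$, obtaining a relative point object on $\X \times Y$, which by the (relative version of the) classification and the definition of $\mathscr{M}$ yields a morphism $\mu \colon Y \to \mathscr{M}$, landing in a single connected component $\mathscr{M}_{(S,\delta)}$ by connectedness of $Y$ (after possibly passing to components of $Y$; but $Y$ is connected as a Fourier-Mukai partner of an abelian variety — actually one should first note $Y$ is connected). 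Composing with $\mathscr{M}_{(S,\delta)} \to M_{(S,\delta)}$ gives $\mu \colon Y \to M_{(S,\delta)}$, where by the Proposition in Section 6.1, $M_{(S,\delta)}$ is a torsor under an abelian variety of dimension $\dim A$.

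Next I would argue $\mu$ is an isomorphism. By Proposition \ref{equivalence}, the universal family on $\X \times \mathscr{M}_{(S,\delta)}$ induces an equivalence $\Psi \colon \D(M_{(S,\delta)}, \delta')^{(-1)} \cong \D(\X)^{(1)}$ for the appropriate Brauer class $\delta'$. Then $\Psi^{-1} \circ \Gamma \colon \D(Y,\beta) \cong \D(M_{(S,\delta)}, \delta')$ is an equivalence sending skyscrapers on $\Y$ to skyscrapers on the gerbe over $M_{(S,\delta)}$ (up to shift and twist by a line bundle), by construction of $\mu$: indeed the image of $k(y)$ under $\Gamma$ is, by definition of $\mu$, the point object on $\X$ corresponding to the point $\mu(y) \in M_{(S,\delta)}$, which is exactly $\Psi$ applied to the skyscraper at $\mu(y)$. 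An equivalence of (twisted) derived categories of smooth projective varieties that sends points to points (up to shift/twist/line bundle) is induced by an isomorphism of the underlying varieties — this is the standard argument of Huybrechts, \cite[Corollary 5.23 / Proposition 6.18]{huybrechts2006fourier} adapted to the twisted setting — so $Y \cong M_{(S,\delta)}$, establishing the theorem.

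The main obstacle I expect is making the construction of $\mu \colon Y \to \mathscr{M}$ precise and checking it is a morphism of schemes (not merely a map on $k$-points): this requires the relative form of Theorem \ref{point}, i.e.\ that a relative point object over a base $T$ is étale-locally of the form $\tilde{i}_* \E[r]$ with $(Z,\E)$ a $T$-point of $\mathscr{M}$, which in turn relies on the fact (noted after Proposition \ref{complexes} and in Section 5) that $\mathscr{P}$ is a $\Gm$-gerbe over its coarse space $P$ and that $P$ maps to $\coprod M_{(S,\delta)}$. A secondary subtlety is bookkeeping the Brauer classes: one must verify that $\Gamma$ sending $\D(Y,\beta)$ to $\D(X,\alpha)$ forces the class $\delta'$ on $M_{(S,\delta)}$ to be compatible with $\beta$ via $\mu$, which follows from the functoriality of the gerbe structures but should be spelled out. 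Once these points are in place, the rest is a direct application of the results already proved.
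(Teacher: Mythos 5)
Your proposal is correct and follows essentially the same route as the paper: both use the kernel $P$ of $\Gamma$ (which exists by the twisted Orlov representability theorem) to produce a morphism from $\Y$ (or $\Y^{-1}$) to a single connected component $\mathscr{M}_{(S,\delta)}$ of the moduli of point objects, then invoke Proposition \ref{equivalence} to see that the induced map $g \colon Y \to M_{(S,\delta)}$ on coarse spaces must be an isomorphism. The only cosmetic difference is the last step: you appeal to the standard ``skyscrapers go to skyscrapers implies isomorphism'' criterion, whereas the paper observes directly that $\Psi = \Gamma \circ \tilde{g}^*$ forces $\tilde{g}^*$ to be an equivalence and then checks that the fibers of $g_{\bar k}$ are single reduced points; these amount to the same argument, and your identification of the two real subtleties (promoting the pointwise assignment to a morphism of schemes via flatness of $P$ over $\Y$, and tracking the Brauer classes through the gerbe $\mathscr{M}^{-1}_{(S,\delta)}$) matches exactly what the paper spends its effort on.
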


\begin{proof}
    Let $P \in \D(Y \times X, (\beta, \alpha))$ be the object defining the equivalence $\Gamma$. By \cite[Theorem 1.1]{canonaco2007twisted} we know such $P$ exists. Up to a shift, we may assume that the lowest degree $P$ is concentrated in is $0$. 

    \begin{lemma}
        $P$ is a locally free sheaf on its scheme-theoreric support, which is flat over $\Y$.
    \end{lemma}

    \begin{proof}
        We first show that $P$ is a sheaf, it suffices to show this over $\bar{k}$ so we assume $k = \bar{k}$. For any $y: \Spec{k} \rightarrow \Y$, let $k(y) \otimes \chi_Y^{-1}$ be the $-1$-twisted skyscraper sheaf at $y$. Since we assume $P$ is concentrated in non-negative degrees, we have $\mathcal{H}^{0}(\Gamma(k(y) \otimes \chi_Y^{-1})) \neq 0$ (right-exactness of pullback.) We also know that $\Gamma(k(y) \otimes \chi_Y^{-1})$ must be a point object on $\X$, which then must be a sheaf concentrated in degree zero. 
        
        Now, as in \cite[Lemma 3.31]{huybrechts2006fourier}, replacing $S$ there by $\X \times \Y$, $X$ there by $\Y$, the functor $i^*$ by $- \otimes (k(y) \otimes \chi_Y^{-1})$, and spectral sequence \cite[(3.10)]{huybrechts2006fourier} by spectral sequence \cite[(3.9)]{huybrechts2006fourier}, the same proof shows that $P$ must also be concentrated in degree zero (ie. $P$ is a sheaf), and that $P$ is flat over $\Y$.

        We claim that $P$ is locally free on its scheme-theoretic support. This can be checked locally so we may assume that $\alpha$ and $\beta$ are zero, that is, we may replace $\X$ by $X$ and $\Y$ by Y. Then we have that at any closed point $y \in Y$, $P_{X_y}$ is locally free, which would imply that $P$ is locally free on the scheme-theoretic support of $P$. Since $P$ is flat over $\Y$, this implies that its scheme-theoretic support is flat over $\Y$.
    \end{proof}

    By Theorem \ref{point}, we see that $P$ defines a morphism $\tilde{g}': \Y^{-1} \rightarrow \mathscr{M}$ where $\Y^{-1}$ is the $\Gm$-gerbe corresponding to $\beta^{-1} \in \Br(Y)$. Since $\Y^{-1}$ is connected, \[\tilde{g}': \Y^{-1} \rightarrow \mathscr{M}_{(S, \delta)}\] for some $(S, \delta)$. Note that this is a morphism of gerbes (since the pullback of $(1,1)$-twisted universal bundle is still $(1,1)$-twisted on $\X \times \Y^{-1}$.)

    Let $g: Y \rightarrow M_{(S, \delta)}$ be the map induced on coarse spaces by $\tilde{g}'$. Let $\mathscr{M}^{-1}_{(S, \delta)} \rightarrow M_{(S, \delta)}$ be the $\Gm$-gerbe corresponding to $\gamma^{-1} \in \Br(M_{(S, \delta)})$, where $\gamma$ corresponds to $\mathscr{M}_{(S, \delta)}$. Then the base change of $g$ by $\mathscr{M}^{-1}_{(S, \delta)} \rightarrow M_{(S, \delta)}$ gives a morphism of gerbes \[\tilde{g}: \Y \rightarrow \mathscr{M}^{-1}_{(S, \delta)}.\]
    
    Considering the functor $\tilde{g}^*: \D(M_{(S, \delta)}, \gamma^{-1}) \rightarrow \D(Y, \beta)$ induced by $\tilde{g}$, we see that $\Psi = \Gamma \circ \tilde{g}^*$. 

    So $\tilde{g}^*$ is an equivalence. Passing to $\bar{k}$ we see that over all closed points, the fibers of $g_{\bar{k}}$ are non-empty, $0$-dimensional, connected, and reduced, therefore $g_{\bar{k}}$ is a surjective embedding, hence an isomorphism. This is enough to conclude that $g$ is an isomorphism.
\end{proof}

\begin{corollary}
    Under the same assumptions, we have the following commutative diagram: 
    \[\begin{tikzcd}
	\Y & {\mathscr{M}^{-1}_{(S, \delta)}} \\
	Y & {M_{(S,\delta)}}
	\arrow["\cong", from=1-1, to=1-2]
	\arrow[from=1-1, to=2-1]
	\arrow[from=1-2, to=2-2]
	\arrow["\cong", from=2-1, to=2-2]
    \end{tikzcd}\]
    That is, the class $\beta \in \Br(Y)$ is determined by the class of $\mathscr{M}_{(S, \delta)} \rightarrow M_{(S, \delta)}$.
\end{corollary}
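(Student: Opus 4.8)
The plan is to extract this corollary from the construction already carried out in the proof of Theorem~\ref{partner}. There we produced a morphism of $\Gm$-gerbes $\tilde g \colon \Y \to \mathscr{M}^{-1}_{(S,\delta)}$ sitting over the map $g \colon Y \to M_{(S,\delta)}$ on coarse spaces, and we showed that $g$ is an isomorphism. So the square in the statement is precisely this pair $(\tilde g, g)$ together with the two structure maps $\Y \to Y$ and $\mathscr{M}^{-1}_{(S,\delta)} \to M_{(S,\delta)}$, and it commutes by construction. It remains only to check that the top arrow $\tilde g$ is also an isomorphism, and then to read off the consequence for Brauer classes.

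For the first point I would invoke the general fact that a $1$-morphism between $\Gm$-gerbes over a \emph{fixed} base is automatically an isomorphism (this holds for gerbes banded by any abelian group; cf.\ \cite[Ch.~12]{olsson2023algebraic}). Since $g$ is an isomorphism, form the $\Gm$-gerbe $g^*\mathscr{M}^{-1}_{(S,\delta)} := \mathscr{M}^{-1}_{(S,\delta)} \times_{M_{(S,\delta)},\,g} Y$ over $Y$. The projection $g^*\mathscr{M}^{-1}_{(S,\delta)} \to \mathscr{M}^{-1}_{(S,\delta)}$ is an isomorphism, being the base change of the isomorphism $g$, and $\tilde g$ factors as $\Y \to g^*\mathscr{M}^{-1}_{(S,\delta)} \to \mathscr{M}^{-1}_{(S,\delta)}$ (the first map exists by the universal property, since $\tilde g$ covers $g$). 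The first factor is a morphism of $\Gm$-gerbes over the fixed base $Y$ — indeed $\tilde g$ is a morphism of gerbes, as recorded in the proof of Theorem~\ref{partner}, so it is compatible with the $\Gm$-inertia — hence it is an isomorphism. Composing, $\tilde g$ is an isomorphism, which gives the asserted diagram with both horizontal arrows isomorphisms.

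To conclude, I would translate this into a statement about $\beta$. By construction $\Y \to Y$ is the $\Gm$-gerbe attached to $\beta \in \Br(Y)$, whereas $\mathscr{M}^{-1}_{(S,\delta)} \to M_{(S,\delta)}$ is the $\Gm$-gerbe attached to $\gamma^{-1}$, where $\gamma \in \Br(M_{(S,\delta)})$ is the class of $\mathscr{M}_{(S,\delta)}$. Since the isomorphism $\tilde g$ covers $g$, compatibility of the Brauer class with pullback gives $\beta = g^*(\gamma^{-1}) = -\,g^*\gamma$. As $g$ is itself part of the output of Theorem~\ref{partner}, this exhibits $\beta$ as determined by the class of $\mathscr{M}_{(S,\delta)} \to M_{(S,\delta)}$, as claimed. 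The only step requiring any care is the lemma that a morphism of $\Gm$-gerbes over an isomorphism is an isomorphism; once that is in place the corollary is formal, needing nothing beyond the identifications of $\Y$ and $\mathscr{M}^{-1}_{(S,\delta)}$ with the gerbes of $\beta$ and $\gamma^{-1}$.
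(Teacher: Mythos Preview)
Your proposal is correct and is exactly the argument the paper has in mind: the corollary is stated without proof because the diagram \emph{is} the pair $(\tilde g, g)$ constructed in the proof of Theorem~\ref{partner}, and once $g$ is known to be an isomorphism the standard fact that any morphism of $\Gm$-gerbes over a fixed base is an isomorphism finishes the job. You have simply made explicit what the paper leaves implicit.
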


\begin{remark}
    Let $\mathscr{P}_{\sigma}$ be a connected component of $\mathscr{P}$ defined in Section 5. Using techniques as in Proposition \ref{equivalence}, one can show that $\D(\mathscr{P}_{\sigma})^{(-1)} \cong \D(\A)^{(1)}$. This shows that $\mathscr{M}_{(S, \delta)}$'s are isomorphic to the connected components of $\mathscr{P}$.

    Indeed, one can define the moduli of point objects to be the fibered category over $k$ which to any $T$ associates the groupoid of relative point objects. Our $\mathscr{M}$ is the open substack of $\mathscr{P}$ whose objects are complexes concentrated in degree $0$.
\end{remark}

\nocite{*}
\printbibliography[heading=bibintoc]
\end{document}